\documentclass[a4paper,11pt]{article}

\usepackage[english]{babel}
\usepackage[english]{translator}
\usepackage[T1]{fontenc}
\usepackage[utf8]{inputenc}
\usepackage{amsmath}
\usepackage{amssymb}
\usepackage{amsfonts}
\usepackage{amstext}
\usepackage{amsthm}
\usepackage{mathtools}
\usepackage{bbm}
\usepackage{graphicx}
\usepackage{xcolor}
\usepackage{psfrag}
\usepackage{pgfplots}
\usepackage{adjustbox}
\usepackage{subfig}
\usepackage{colortbl}
\usepackage{titlesec}
\usepackage{listings}
\usepackage{enumerate}
\usepackage{enumitem}
\usepackage{todonotes}
\setlength{\marginparwidth}{2cm}
	
\definecolor{asparagus}{rgb}{0.53, 0.66, 0.42}
\definecolor{ballblue}{rgb}{0.13, 0.67, 0.8}
\definecolor{cadmiumgreen}{rgb}{0.0, 0.42, 0.24}
\definecolor{cobalt}{rgb}{0.0, 0.28, 0.67}
\definecolor{darklavender}{rgb}{0.45, 0.31, 0.59}
\definecolor{green(pigment)}{rgb}{0.0, 0.65, 0.31}

\usepackage{hyperref}
\usepackage{geometry}
\geometry{verbose,a4paper,tmargin=35mm,bmargin=25mm,lmargin=27mm,rmargin=27mm}
\usepackage{tabularx}
\makeatletter
\def\hlinewd#1{%
\noalign{\ifnum0=`}\fi\hrule \@height #1 %
\futurelet\reserved@a\@xhline}
\makeatother

\hypersetup{
  pdffitwindow=false,
  pdfhighlight=/O,
  pdfnewwindow,
  colorlinks=true,
  citecolor=red,       
  linkcolor=blue,          
  menucolor=blue,        
  urlcolor=blue,            
  pdfpagemode=UseOutlines,
  bookmarksnumbered=true,
  linktocpage,
  pdfkeywords={},
  pdfcreator={pdflatex},
  pdfproducer={LaTeX with hyperref}
}

\newcommand{\C}{\mathbb{C}}               
\newcommand{\R}{\mathbb{R}}               
\renewcommand{\Re}{\mathrm{Re}\,}

\renewcommand{\ker}{\mathrm{ker}}
 
\newcommand{\ci}{\mathrm{i}}

\newcommand{\LOD}{{\text{\normalfont ms}}}

\newcommand{\Rlod}{P_h^{\LOD}}
\newcommand{\Rlodorth}{P_h^{\perp,\LOD}} %
\newcommand{\Vlod}{V_{h}^{\LOD}}

\renewcommand{\div}{\mathrm{div} \,}		

\newcommand{\Cor}{\mathcal{C}}
\newcommand{\ulod}{u_h^{\LOD}}
\newcommand{\vlod}{v_h^{\LOD}}
\newcommand{\bfA}{\boldsymbol{A}}

\newcommand{\RitzLOD}{ R_{h}^{\LOD}}

\newcommand{\bfH}{\boldsymbol{H}}
\renewcommand{\div}{\mathrm{div} \,}	
\newcommand{\curl}{\mathrm{curl} \,}
\newcommand{\ucol}{u}
\newcommand{\Ecol}{E}

 \newcommand{\Csol}{\rho(\kappa)} 
\newcommand{\Csolinv}{\rho(\kappa)^{-1}} 
\newcommand{\orthiu}{( \ci \ucol)^{\perp}} 

\renewcommand{\Re}{\mathrm{Re}\,}

\newcommand{\Ph}{\pi_{h}} 
\newcommand{\EGL}{E_{\mbox{\tiny GL}}} 
\newcommand{\dx}{\,\,\textnormal{d}x}

\newcommand{\Ku}{K_{\tau,\kappa}(u)}

\newcommand{\quotes}[1]{``#1''}

\theoremstyle{definition}
\newtheorem{definition}{Definition}[section]

\theoremstyle{plain}
\newtheorem{theorem}[definition]{Theorem}
\newtheorem{lemma}[definition]{Lemma}
\newtheorem{corollary}[definition]{Corollary}

\allowdisplaybreaks

\setlength\parindent{0pt}

\begin{document}

\begin{center}
{\Large 
The Ginzburg-Landau equations: Vortex states and numerical multiscale approximations\renewcommand{\thefootnote}{\fnsymbol{footnote}}\setcounter{footnote}{0}
 \hspace{-3pt}\footnote{
 Both authors acknowledge funding by the Deutsche Forschungsgemeinschaft (DFG, German Research Foundation). C.~D\"oding  was supported under Germany's Excellence Strategy -- EXC-2047/1 -- 390685813 and P.~Henning through the DFG project grant HE 2464/7-1 -- 496389671.}}
\end{center}

\begin{center}
{\large Christian D\"oding\footnote[1]{Institute for Numerical Simulation, University Bonn, DE-53115 Bonn, Germany, \\ e-mail: \textcolor{blue}{doeding@ins.uni-bonn.de}.}} and 
{\large Patrick Henning\footnote[2]{Department of Mathematics, Ruhr University Bochum, DE-44801 Bochum, Germany, \\ e-mail: \textcolor{blue}{patrick.henning@rub.de}.}}\\[2em]
\end{center}

\noindent
\begin{center}
\begin{minipage}{0.8\textwidth}
  {\small
    \textbf{Abstract.} 
    In this review article, we provide an overview of recent advances in the numerical approximation of minimizers of the Ginzburg--Landau energy in multiscale spaces. Such minimizers represent the most stable states of type-II superconductors and, for large material parameters~$\kappa$, capture the formation of lattices of quantized vortices. As the vortex cores shrink with increasing~$\kappa$, while their number grows, it is essential to understand how~$\kappa$ should couple to the mesh size in order to correctly resolve the vortex patterns in numerical simulations. We summarize and discuss recent developments based on LOD (Localized Orthogonal Decomposition) multiscale methods and review the corresponding error estimates that explicitly reflect the $\kappa$-dependence and the observed superconvergence. In addition, we include several minor refinements and extensions of existing results by incorporating techniques from recent contributions to the field. Finally, numerical experiments are presented to illustrate and support the theoretical findings. 
}
\end{minipage}
\end{center}

\section{Introduction}

In most materials, the flow of electric current encounters resistance, leading to energy dissipation. Superconductors, however, form a remarkable class of materials that exhibit zero electrical resistance, thereby enabling numerous technological applications.

The standard mathematical framework for superconductivity is the Ginzburg--Landau (GL) model \cite{DuGP93,Landau,SaS07,Tinkham}. Let $\Omega \subset \R^d$ with $d=2$ or $3$ denote the region occupied by the superconductor. The superconducting state is described by a complex-valued order parameter $u \colon \Omega \to \C$. While $u$ itself is not directly observable, its modulus squared $|u|^2$ represents the density of superconducting electron pairs (Cooper pairs) and can be measured experimentally.

Typically, the order parameter satisfies $0 \leq |u|^2 \leq 1$. At points $x \in \Omega$ where $|u(x)|^2=0$, the material is in the normal (non-superconducting) state, whereas $|u(x)|^2=1$ corresponds to the fully superconducting state. Intermediate values $0 < |u(x)|^2 < 1$ describe a mixed state, where superconducting and normal phases coexist with varying density of Cooper pairs.

In such mixed regimes, the material may form structures known as Abrikosov vortex lattices \cite{Abr04}, in which the order parameter vanishes at the vortex centers. These configurations are characteristic of type-II superconductors, which allow partial penetration of an external magnetic field $\mathbf{H}$ provided its strength lies within an appropriate range.

In this setting, both the unknown order parameter $u$ and the internal magnetic field $\curl \bfA$ are determined as minimizers of the \textit{Ginzburg-Landau free energy} (cf.~\cite[Sec.~3]{DGP92}),
\begin{eqnarray} \label{eq:energy_functional}
	\EGL(u,\bfA) 
	&:=&
	\frac{1}{2} \int_\Omega 
	\Bigl| \frac{\ci}{\kappa} \nabla u + \bfA u \Bigr|^2 
	+ 
	\frac{1}{2} \bigl( 1- |u|^2 \bigr)^2
	+
	|\curl \bfA - \bfH |^2
	\dx,
\end{eqnarray}
where $\bfH$ is the applied external magnetic field, $\kappa >0$ is the Ginzburg--Landau parameter, and $\bfA$ denotes the magnetic vector potential. The relation $\bfH_{\text{int}} = \curl \bfA$ provides the internal magnetic field of the superconductor.

Each term in \eqref{eq:energy_functional} reflects a competing physical mechanism: the first term represents kinetic energy, the second enforces the superconducting preference $|u|^2 = 1$, and the third favors alignment of $\bfH_{\text{int}} = \curl \bfA$ with the applied external field $\bfH$. These contributions are mutually conflicting, except in the trivial case $\bfH = 0$, where $(u,\bfA) = (1,\mathbf{0})$ yields zero energy. In practice, minimizers balance these competing tendencies.

The most intriguing states are the vortex lattices that arise in type-II superconductors. Their existence and structure depend crucially on $\kappa$. For small $\kappa$, no vortices occur, while for sufficiently large $\kappa$, vortex lattices emerge. As $\kappa$ increases, the number of vortices grows while their size shrinks, leading to finer lattice patterns. Figure~\ref{fig:vortices_reference} illustrates this phenomenon where densities $|u|^2$ of minimizing order parameters are shown in a two dimensional model configuration. 

\begin{figure}[h]
\centering
\begin{minipage}{0.19\textwidth}
\includegraphics[scale=0.14]{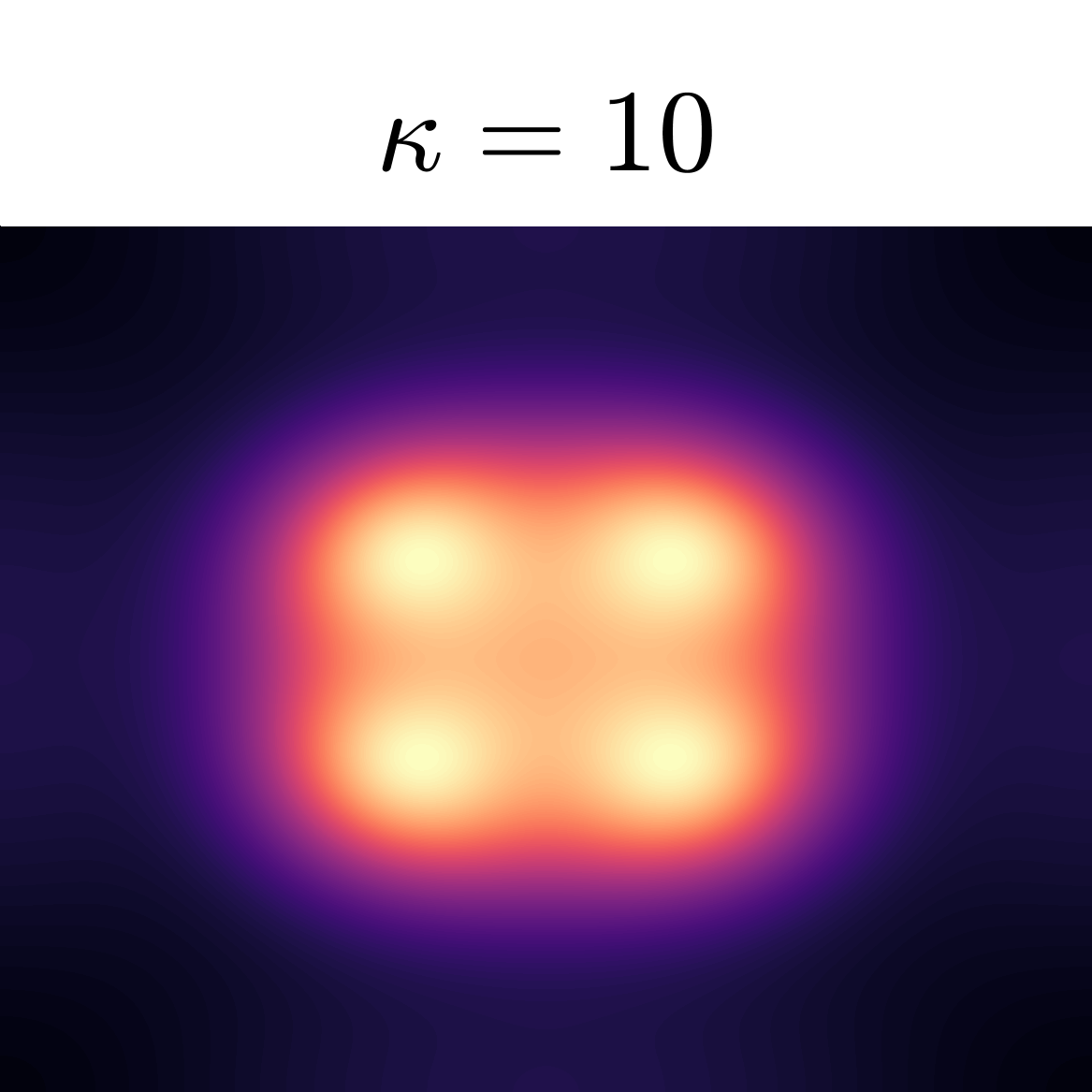}
\end{minipage}
\begin{minipage}{0.19\textwidth}
\includegraphics[scale=0.14]{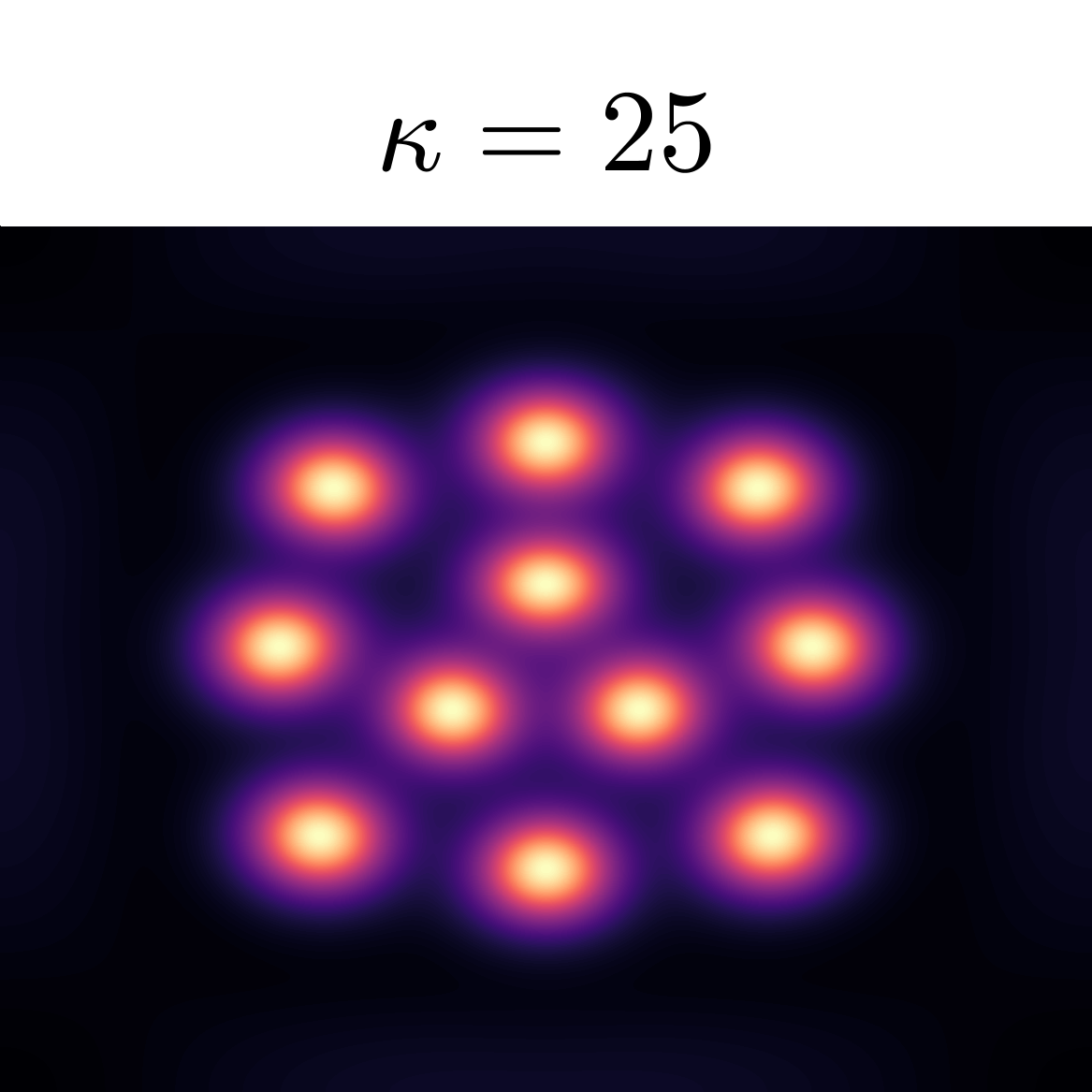}
\end{minipage}
\begin{minipage}{0.19\textwidth}
\includegraphics[scale=0.14]{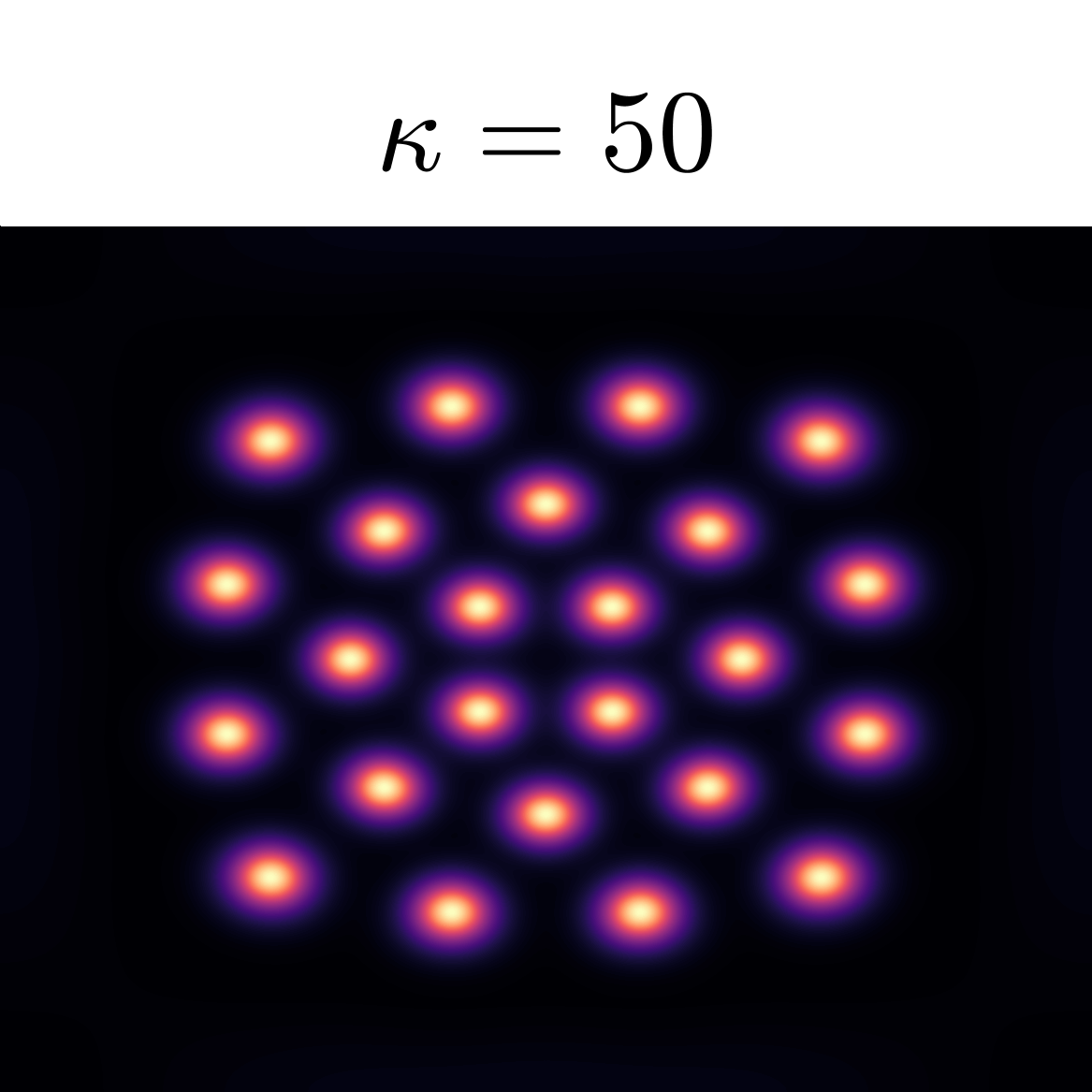}
\end{minipage}
\begin{minipage}{0.19\textwidth}
\includegraphics[scale=0.14]{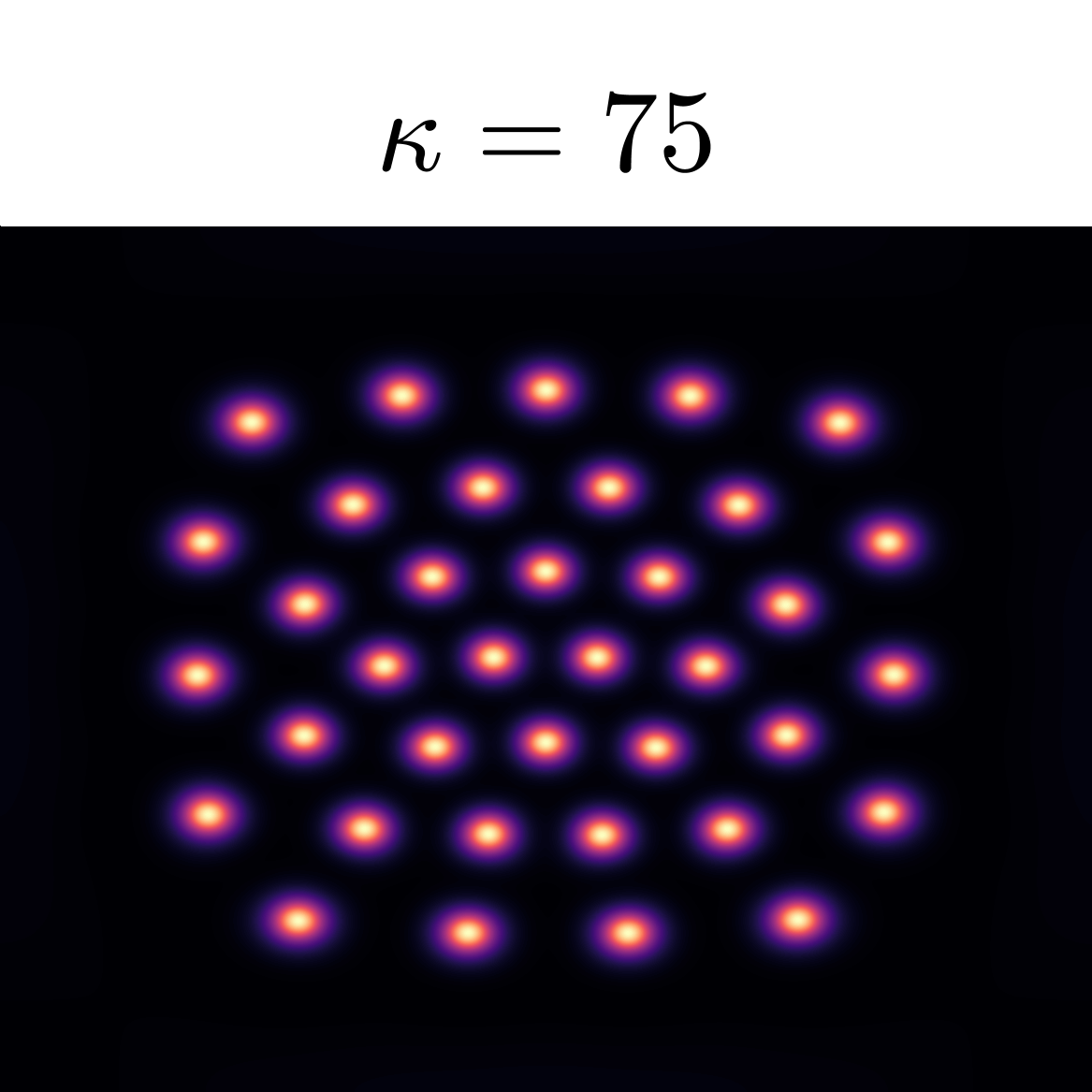}
\end{minipage}
\begin{minipage}{0.19\textwidth}
\includegraphics[scale=0.14]{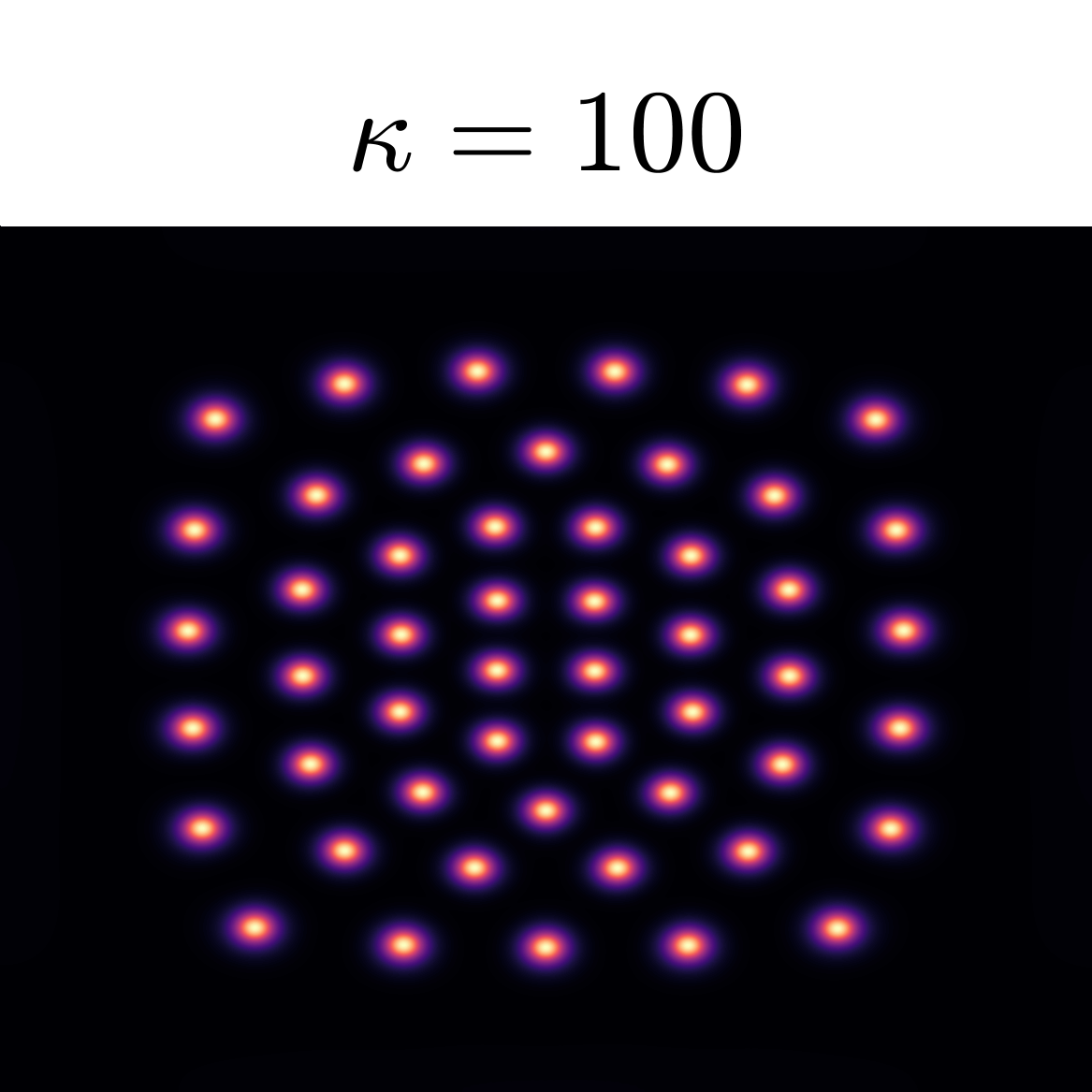}
\end{minipage}
	\caption{In a fixed configuration, vortex patterns in the minimizing density $|u|^2$ for $\kappa=10, 25, 50, 75, 100$.}
	\label{fig:vortices_reference}
\end{figure}

These intricate vortex patterns have motivated the development of a wide range of numerical methods for GL models, such as finite differences \cite{Du05,GJX19}, finite elements \cite{DGP92,DuGP93,GaoS18,HongMaXuChen23,Li17,LiZ15,LiZ17,MaQiao23}, finite volume methods \cite{DuNicolaidesWu98,QuJu05}, and, very recently, vortex-tracking strategies based on ODE systems for the vortex cores \cite{CorsoKemlinMelcherStamm2025}. All these methods address different computational challenges in various settings, both stationary and time-dependent.

In this chapter, we investigate the GL model, in a simplified version where the vector potential $\bfA$ is given, from a numerical perspective using a multiscale approach known as the Localized Orthogonal Decomposition (LOD). Comprehensive introductions to the LOD framework can be found in the monograph~\cite{MaPeBook21} and the survey~\cite{AHP21Acta}, which also highlight its broad range of applications. These include, for example, wave propagation \cite{AbdHe17,FHP24,MaPe19,MaVer22wave,KrMa25}, eigenvalue problems \cite{HePer23,KolVer25,MaPe15,Map17}, and nonlinear PDEs \cite{DHW24,HeWa22,MaVer22,Ver22}.
The LOD, introduced in \cite{MP14} for elliptic multiscale problems and first proposed for the GL model in \cite{DH24}, was rigorously analyzed and applied in \cite{BDH25} and turned out to be a beneficial approximation approach for GL energy minimizers. In particular, it was shown that the resolution condition on the numerical mesh in dependence on the GL parameter $\kappa$ can be significantly relaxed compared to the resolution condition in classical finite element spaces as shown in \cite{DH24}. Recently, the pollution effect was further investigated for classical finite elements of arbitrary order \cite{CFH25} and for low-regularity regimes on non-convex domains \cite{Doerich2025}. By developing a new error analysis in \cite{CFH25}, the authors extended previous results from \cite{DH24} and succeeded in tracing all $\kappa$-dependencies in the resolution condition, thereby closing the remaining gaps in the dependency analysis. Using the new approach for the error analysis, we are able to refine the error analysis from \cite{BDH25} in the context of LOD multiscale methods.\\

The rest of this chapter is organized as follows. In Section \ref{sec:GL} we introduce the analytical framework for minimizers of the free GL energy and collect some preliminary results including the discussion of first and second order optimality conditions. In Section \ref{sec:error_analysis} we introduce the multiscale approximation spaces based on LOD and consider minimizers in this discrete spaces. We state our main results giving error estimates of discrete minimizers under weak resolution condition and derive the error analysis in the remaining section. Finally, Section \ref{sec:numerics} is devoted to numerical computation of (discrete) minimizers. We introduce a nonlinear conjugate Sobolev gradient descent method to compute GL minimizers in numerical simulations and close this chapter with numerical examples.

\section{The Ginzburg-Landau energy minimization problem} 
\label{sec:GL}

Before diving in the mathematical framework we fix our notation.

\medskip
For $2 \le p \le \infty$, let $L^p(\Omega)$ denote the standard Lebesgue space of complex-valued functions on $\Omega$, equipped with the norm $\|\cdot\|_{L^p}$. In the case $p=2$, we endow $L^2(\Omega)$ with the \emph{real} inner product
\begin{align*}
(v,w)_{L^2} := \Re \int_{\Omega} v \, \overline{w} \, \mathrm{d}x, 
\qquad v,w \in L^2(\Omega),
\end{align*}
where $\overline{w}$ denotes the complex conjugate of $w$. With this convention, $L^2(\Omega)$ becomes a real Hilbert space of complex-valued functions. 
The same notation is used for vector fields, in which case the Euclidean dot product is applied inside the integral.

For any integer $k \ge 1$, we denote by $H^{k}(\Omega)$ the standard Sobolev space of complex-valued functions in $L^2(\Omega)$ whose weak derivatives up to order $k$ also belong to $L^2(\Omega)$. The corresponding norm is defined by
\begin{align*}
\|u\|_{H^{k}} := \Biggl( \sum_{|\alpha| \le k} \|\partial^{\alpha} u\|_{L^2}^2 \Biggr)^{1/2},
\end{align*}
where $\alpha$ is a multi-index. We equip $H^1(\Omega)$ with the real inner product
\begin{align*}
(v,w)_{H^1} := \Re \int_{\Omega} v \, \overline{w} + \nabla v \cdot \overline{\nabla w} \, \mathrm{d}x.
\end{align*}

Vector potentials are real-valued vector fields. We use the notation $L^p(\Omega;\R^d)$ and $\boldsymbol{H}^k(\Omega) := H^k(\Omega;\R^d)$ for the corresponding vector-valued spaces. For a function $v \in H^1(\Omega)$ and a functional $f \in (H^1(\Omega))'$ in the dual space we denote by $\langle f, v \rangle$ the dual pairing. Finally, we write $a \lesssim b$ if there exist a constant $C > 0$ which is independent of $\kappa$ and $h$ such that $a \le C b$.
\medskip

As described in the introduction the order parameter $\ucol \in H^1(\Omega)$ and the magnetic potential $\bfA \in \boldsymbol{H}^1(\Omega)$ of the superconductor are given as minimizers of the energy
\begin{align*} 
{\EGL(\ucol,\bfA) = \frac{1}{2} \int_\Omega \Big|\frac{\ci}{\kappa} \nabla \ucol + \bfA \,\ucol \Big|^2 + \frac{1}{2} \left( 1- |\ucol|^2 \right)^2 
+ |\curl \bfA - \boldsymbol{H} |^2
+ |\div \bfA|^2
\,\dx}
\end{align*}
on the space $H^1(\Omega) \times \boldsymbol{H}^1(\Omega)$. Compared to the definition of $\EGL$ in \eqref{eq:energy_functional}, we see that the above energy contains the additional term $\int_\Omega |\div \bfA|^2 \,\dx$. This term imposes a gauge condition to ensure uniqueness of $\bfA$ without changing the relevant physical quantity of interest, that is, the magnetic field \,$\curl \bfA$. 

In order to simplify the presentation of the main theoretical findings, we want to consider a simplified energy functional. The simplification is based on the following general observations which can be made: 
\begin{itemize}
\item  The order parameter $\ucol$ is typically the \quotes{interesting} quantity, exhibiting complex vortex structures.
\item  On convex domains it can be proved (cf. \cite{DDH24}) that it holds $\bfA \in \boldsymbol{H}^2(\Omega)$ and
\begin{align*}
{ \| \bfA \|_{L^{\infty}} \,\, \lesssim \,\, \| \bfA \|_{H^2} \,\, \lesssim \,\, 1 + \| \boldsymbol{H} \|_{L^2} + \| \curl \boldsymbol{H} \|_{L^2}}\qquad
\mbox{{independent} of ${\kappa}$}. 
\end{align*}
\end{itemize}
Hence, in a simplified model we can assume that $\bfA$ is given and smooth. With this, we consider the following (reduced) Ginzburg--Landau model, which only seeks the order parameter $\ucol \in H^1(\Omega)$ as \textit{global minimizer} of the energy
\begin{align*} 
{\Ecol({v}) := \frac{1}{2} \int_\Omega \Big| \frac{\ci}{\kappa} \nabla {v} + \bfA \,{v} \Big|^2 + \frac{1}{2} \left( 1- |{v}|^2 \right)^2\,\dx},
\end{align*}
i.e. $u \in H^1(\Omega)$ fulfills
\begin{align}
\label{minimizer-energy-def} 
E(u) \,\,= \min_{v \in H^1(\Omega)} E(v).
\end{align}
The given quantities are $\Omega$, $\bfA$ and $\kappa$, where we make the following universal assumptions:
\begin{enumerate}[label={(A\arabic*)}]
\item\label{A0} $\Omega \subset \R^d$ is a convex bounded polytope, with $d = 2,3$.
\item\label{A1} The magnetic vector potential $\bfA \in L^{\infty} (\Omega;\mathbb{R}^d)$ fulfills 
\begin{align*}
	{\mbox{{div}}\,\bfA = 0} \,\,\text{ in } {\Omega}
	\qquad
	\mbox{and}
	\qquad 
	{\bfA \cdot \boldsymbol{n} =0}  \,\,\text{ on } {\partial \Omega}.
\end{align*}
\item\label{A2} The material parameter $\kappa > 0$ is constant and real. 
\end{enumerate}
Note that the divergence of $\bfA$ in \ref{A1} is understood in terms of weak derivatives. If $\bfA$ has a weak divergence, then it also has a normal trace $\bfA \cdot \boldsymbol{n}$ on the boundary of a Lipschitz domain $\Omega$. Hence, the condition ${\bfA \cdot \boldsymbol{n} =0}$ is well-defined.

\subsection{Well-posedness and first order conditions}

In this section, we recall classical results concerning the existence of a global minimizer 
$u \in H^1(\Omega)$ of the energy functional $E$ under assumptions \ref{A1}--\ref{A2}. 
We further analyze its relation to the Ginzburg--Landau equation and 
establish regularity and stability estimates for the minimizer in terms of $\kappa$ 
under assumption \ref{A0}. These stability estimates are fundamental for quantifying 
the approximation properties of finite element discretizations. \\

The existence of a global minimizer $\ucol$ of $E$ can be shown by a classical compactness argument and the weak lower-semi continuity of $E$ on $H^1(\Omega)$, cf. \cite{DGP92}. 

\begin{lemma}
Let {$\Omega \subset \R^{d}$} ({$d=2,3$}) be a bounded Lipschitz-domain and let \ref{A1}-\ref{A2} be fulfilled. Then there exists a global minimizer $\ucol \in H^1(\Omega)$ of $E$, i.e. \eqref{minimizer-energy-def} holds.
\end{lemma}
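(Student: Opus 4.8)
The plan is to argue by the direct method of the calculus of variations, following the standard scheme of coercivity, boundedness of a minimizing sequence, and weak lower semicontinuity. First I would observe that $E(v) \ge 0$ for all $v \in H^1(\Omega)$, so that $m := \inf_{v \in H^1(\Omega)} E(v) \in [0,\infty)$ is well defined, and I would fix a minimizing sequence $(u_n)_n \subset H^1(\Omega)$ with $E(u_n) \to m$; in particular $\sup_n E(u_n) =: C < \infty$.

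The heart of the argument, and the step I expect to cause the most trouble, is to deduce a uniform $H^1(\Omega)$ bound on $(u_n)_n$ from the energy bound $C$. I would first extract an $L^4$ bound from the potential term: expanding $(1-|u_n|^2)^2 = 1 - 2|u_n|^2 + |u_n|^4$ and using H\"older's inequality $\|u_n\|_{L^2}^2 \le |\Omega|^{1/2}\|u_n\|_{L^4}^2$ to absorb the middle term, the bound $\tfrac{1}{2}\int_\Omega (1-|u_n|^2)^2 \le C$ yields a uniform bound on $\|u_n\|_{L^4}$, and hence on $\|u_n\|_{L^2}$. With this in hand I would control the gradient via the triangle inequality in $L^2$: since $\bfA \in L^{\infty}(\Omega;\R^d)$,
\[
\tfrac{1}{\kappa}\|\nabla u_n\|_{L^2} = \Bigl\| \tfrac{\ci}{\kappa}\nabla u_n \Bigr\|_{L^2} \le \Bigl\| \tfrac{\ci}{\kappa}\nabla u_n + \bfA u_n \Bigr\|_{L^2} + \|\bfA\|_{L^{\infty}}\|u_n\|_{L^2} \le \sqrt{2C} + \|\bfA\|_{L^{\infty}}\|u_n\|_{L^2},
\]
so that $\|\nabla u_n\|_{L^2}$ is bounded as well, and $(u_n)_n$ is bounded in $H^1(\Omega)$.

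By reflexivity I would then pass to a subsequence (not relabeled) with $u_n \rightharpoonup u$ weakly in $H^1(\Omega)$ for some $u \in H^1(\Omega)$. Since $\Omega$ is a bounded Lipschitz domain with $d \le 3$, the Rellich--Kondrachov theorem provides the compact embedding $H^1(\Omega) \hookrightarrow\hookrightarrow L^4(\Omega)$, so $u_n \to u$ strongly in $L^4(\Omega)$, and in particular $|u_n|^2 \to |u|^2$ in $L^2(\Omega)$; consequently the potential term converges, $\int_\Omega (1-|u_n|^2)^2 \to \int_\Omega (1-|u|^2)^2$. For the kinetic term I would note that $v \mapsto \tfrac{\ci}{\kappa}\nabla v + \bfA v$ is a bounded linear operator $H^1(\Omega) \to L^2(\Omega)$; combining $\nabla u_n \rightharpoonup \nabla u$ weakly in $L^2$ with $\bfA u_n \to \bfA u$ strongly in $L^2$ (using $\bfA \in L^{\infty}$ together with the $L^2$-convergence) gives $\tfrac{\ci}{\kappa}\nabla u_n + \bfA u_n \rightharpoonup \tfrac{\ci}{\kappa}\nabla u + \bfA u$ weakly in $L^2$, so that weak lower semicontinuity of $w \mapsto \|w\|_{L^2}^2$ yields $\int_\Omega |\tfrac{\ci}{\kappa}\nabla u + \bfA u|^2 \le \liminf_n \int_\Omega |\tfrac{\ci}{\kappa}\nabla u_n + \bfA u_n|^2$.

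Putting the two parts together gives $E(u) \le \liminf_n E(u_n) = m$, while $u \in H^1(\Omega)$ also satisfies $E(u) \ge m$ by definition of the infimum, whence $E(u) = m$ and \eqref{minimizer-energy-def} holds. The conceptual point is that convexity of $w \mapsto \|w\|_{L^2}^2$ is what makes the kinetic term weakly lower semicontinuous, whereas compactness of the embedding into $L^4$ upgrades the potential term to full continuity along the sequence. The only quantitatively delicate step is the $H^1$-bound, whose constant depends on $\kappa$ through the factor in front of $\|\nabla u_n\|_{L^2}$; for the mere existence statement this dependence is harmless, though it is precisely such $\kappa$-dependencies that the later error analysis must track carefully.
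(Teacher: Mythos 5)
Your proposal is correct and follows exactly the route the paper intends: the paper gives no detailed proof but simply invokes ``a classical compactness argument and the weak lower semi-continuity of $E$ on $H^1(\Omega)$'' with a citation to \cite{DGP92}, which is precisely the direct method (coercivity via the potential and kinetic terms, bounded minimizing sequence, weak $H^1$ convergence, compact embedding into $L^4$, and weak lower semicontinuity of the kinetic term) that you carry out in full. The only blemish is the harmless constant: the potential term enters the energy as $\tfrac{1}{4}\int_\Omega (1-|u_n|^2)^2\dx \le C$ rather than $\tfrac{1}{2}\int_\Omega (1-|u_n|^2)^2\dx \le C$, which does not affect the argument.
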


Besides, of global minimizers we also consider \textit{local minimizers} that minimize the energy $E$ at least in a local neighborhood. In our terminology, unless otherwise specified, we name $u$ a minimizer if it is a local minimizer which may or may not be a global minimizer. \\

For a minimizer $u$, classical optimization theory implies that it needs to be a \textit{critical point} of $E$ so that the first Fr\'echet derivative if it exists needs to vanish, i.e., $\langle E'(u), w \rangle = 0$ for all $w \in H^1(\Omega)$. This is usually called the \textit{first order optimality condition}. Indeed, it is a simple matter of fact that the energy functional $E: H^1(\Omega) \rightarrow \R$ is Fr\'echet differentiable on the real Hilbert space $H^1(\Omega)$ and its derivative at a point $v \in H^1(\Omega)$ is given by
\begin{align*}
\langle E^{\prime}(v) , {w} \rangle
\,\,=\,\, \Re \int_\Omega \bigl( \tfrac{\ci}{\kappa}  \nabla v +   \bfA v \bigr)  \cdot \overline{\bigl(\tfrac{\ci}{\kappa}  \nabla {w} +  \bfA {w} \bigr)}  
	+
	 \bigl( |v|^2 -1 \bigr)  v \overline{{w}} 
	\,\,\dx  \quad \mbox{for all } w \in H^1(\Omega).
\end{align*}
Thus, by the first order condition we obtain that any minimizer $u \in H^1(\Omega)$ satisfies the \textit{Ginzburg-Landau equation} (GLE)
\begin{align} \label{eq:GL_variational}
	\bigl(  \tfrac{\ci}{\kappa}  \nabla u +   \bfA u, \tfrac{\ci}{\kappa}  \nabla w +   \bfA w \bigr) + \bigl((|u|^2 - 1)u,w \bigr) = 0 \quad \mbox{for all } w \in H^1(\Omega).
\end{align}
Under the assumptions \ref{A0}-\ref{A2}, this Neumann-problem is $H^2$-regular, cf. \cite{Gr11}, so that $u \in H^2(\Omega)$ solves the GLE in its strong form
\begin{align} \label{eq:GL_strong}
	\bigl( \tfrac{\ci}{\kappa} \nabla + \bfA \bigr)^2 u + \bigl( |u|^2 - 1 \bigr) u = 0
\end{align}
with the natural boundary condition $\bfA \cdot \boldsymbol{n} =0$ on $\partial \Omega$. \\

As we state now any critical point, and therefore any minimizer, satisfies certain $\kappa$-dependent stability estimates that will play a crucial role in our subsequent analysis. Its proof can be found in \cite{CFH25} for arbitrary critical points which extends the previous result from \cite{DH24} which is limited to global minimizers. 

\begin{lemma}{\cite[Thm.~2.5 \& Thm.~2.6]{CFH25}} \label{lem:existence}
Let {$\Omega \subset \R^{d}$} ({$d=2,3$}) be a bounded Lipschitz-domain, let \ref{A1}-\ref{A2} be fulfilled, and let $u \in H^1(\Omega)$ be a critical point of $E$, i.e., $\langle E'(u), v \rangle = 0 $ for all $v \in H^1(\Omega)$. Then the following stability bounds hold:
\begin{align*}
	| u(x)| \le 1 \quad \text{a.e.~in } \Omega, \quad \| u \|_{L^2} \lesssim 1, \quad \| \nabla u \|_{L^2} \lesssim \kappa \| u\|_{L^2}. 
\end{align*}
If in addition \ref{A0} is satisfied, then $u \in H^2(\Omega)$ with the bounds
\begin{align*}
	\| \nabla u \|_{L^4} \lesssim \kappa \quad \text{and} \quad \| D^2 u \|_{L^2} \lesssim \kappa^2
\end{align*}
where $D^2 u$ denotes the Hessian of $u$.
\end{lemma}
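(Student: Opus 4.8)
The plan is to extract everything from the first-order condition \eqref{eq:GL_variational} by testing against suitably chosen $w$, first establishing the pointwise bound and then bootstrapping to the higher-order estimates via convex-domain elliptic regularity. The key observation for $|u|\le 1$ is that testing \eqref{eq:GL_variational} with $w = u\psi$ for a nonnegative real weight $\psi$ produces the term $\int_\Omega \psi\,|\tfrac{\ci}{\kappa}\nabla u + \bfA u|^2 \ge 0$, while all cross terms involving $\bfA$ drop out because $|u|^2$ is real; the only surviving piece of the magnetic gradient collapses to $\tfrac{1}{2\kappa^2}\int_\Omega \nabla|u|^2\cdot\nabla\psi$, using $\Re(\bar u\nabla u)=\tfrac12\nabla|u|^2$. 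I would take $\psi = (|u|^2-1)^+$ (regularized as $\psi_M=\min(\psi,M)$ to keep $w\in H^1(\Omega)$ in the three-dimensional case, then letting $M\to\infty$). Writing $f=|u|^2$ and using $\nabla(f-1)^+ = \one_{\{f>1\}}\nabla f$, the gradient term becomes $\tfrac{1}{2\kappa^2}\|\nabla(f-1)^+\|_{L^2}^2\ge 0$ and the nonlinear term becomes $\int_\Omega f\,[(f-1)^+]^2\ge 0$. Since their sum vanishes and $f\ge 0$, both are zero, forcing $(f-1)^+=0$, i.e.\ $|u|\le 1$ a.e.; the bound $\|u\|_{L^2}\lesssim 1$ is then immediate from $|\Omega|<\infty$.

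For the first-order gradient bound I would simply test \eqref{eq:GL_variational} with $w=u$, which gives $\|\tfrac{\ci}{\kappa}\nabla u + \bfA u\|_{L^2}^2 = \int_\Omega (1-|u|^2)|u|^2 \le \|u\|_{L^2}^2$, where I use $0\le 1-|u|^2\le 1$ from the pointwise bound. A triangle inequality together with $\|\bfA u\|_{L^2}\le\|\bfA\|_{L^\infty}\|u\|_{L^2}$ from \ref{A1} then yields $\tfrac{1}{\kappa}\|\nabla u\|_{L^2} \le \|\tfrac{\ci}{\kappa}\nabla u + \bfA u\|_{L^2} + \|\bfA u\|_{L^2}\lesssim \|u\|_{L^2}$, which is the claimed estimate $\|\nabla u\|_{L^2}\lesssim\kappa\|u\|_{L^2}$.

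For the higher-order bounds under \ref{A0}, I would rewrite the strong form \eqref{eq:GL_strong}, using $\div\bfA=0$, as $\Delta u = 2\ci\kappa\,\bfA\cdot\nabla u + \kappa^2|\bfA|^2 u + \kappa^2(|u|^2-1)u$. Estimating in $L^2$ with $\|\bfA\|_{L^\infty}\lesssim 1$, $|u|\le 1$, $\|u\|_{L^2}\lesssim 1$ and $\|\nabla u\|_{L^2}\lesssim\kappa$ gives $\|\Delta u\|_{L^2}\lesssim\kappa^2$, and the convex-domain $H^2$-regularity of the Neumann problem (cited via \cite{Gr11}) upgrades this to $\|D^2u\|_{L^2}\lesssim\|\Delta u\|_{L^2}+\|u\|_{H^1}\lesssim\kappa^2$. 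For $\|\nabla u\|_{L^4}\lesssim\kappa$ I would not interpolate naively between $\|\nabla u\|_{L^2}\lesssim\kappa$ and $\|D^2u\|_{L^2}\lesssim\kappa^2$ (which only delivers $\kappa^{3/2}$), but instead invoke the Gagliardo--Nirenberg inequality $\|\nabla u\|_{L^4}^2\lesssim\|u\|_{L^\infty}\bigl(\|D^2u\|_{L^2}+\|u\|_{L^2}\bigr)$, valid for $d=2,3$ with exponent $a=\tfrac12$, and feed in $\|u\|_{L^\infty}\le 1$ to obtain $\|\nabla u\|_{L^4}^2\lesssim\kappa^2$.

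The main obstacle I anticipate is the pointwise bound: one must justify $w=u(|u|^2-1)^+\in H^1(\Omega)$ starting only from $u\in H^1(\Omega)$, since no $H^2$-regularity is yet available here (assumption \ref{A0} enters only for the higher-order estimates), which is precisely what forces the truncation-and-limit argument, and one must verify carefully that every $\bfA$-dependent cross term cancels. The second subtle point is tracking the $\kappa$-powers sharply; obtaining $\|\nabla u\|_{L^4}\lesssim\kappa$ rather than $\kappa^{3/2}$ hinges on routing the $L^\infty$-bound, not the $H^1$-gradient bound, into the interpolation inequality.
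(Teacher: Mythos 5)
Your proposal is correct. Note that the paper itself does not prove this lemma but imports it wholesale from \cite[Thm.~2.5 \& Thm.~2.6]{CFH25} (extending \cite{DH24}); your argument reconstructs precisely the standard proof used there and in the earlier Ginzburg--Landau literature (cf.~\cite{DGP92}): the truncated test function $w=u\min\bigl((|u|^2-1)^+,M\bigr)$ for the maximum-principle bound $|u|\le 1$, testing \eqref{eq:GL_variational} with $w=u$ plus the triangle inequality for $\|\nabla u\|_{L^2}\lesssim\kappa\|u\|_{L^2}$, and convex-domain Neumann $H^2$-regularity combined with the Gagliardo--Nirenberg inequality $\|\nabla u\|_{L^4}^2\lesssim\|u\|_{L^\infty}\|u\|_{H^2}$ (rather than plain $L^2$--$H^1$ interpolation, which would lose a factor $\kappa^{1/2}$) for the second-order bounds. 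Your two flagged subtleties --- admissibility of the truncated test function and routing the $L^\infty$ bound into the interpolation --- are exactly the right ones.
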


The pointwise bound in Lemma \ref{lem:existence} shows that the density of any minimizer $|\ucol|^2$ indeed takes values between zero and one which is in align with its physical interpretation of the density of superconducting electrons - the Cooper pairs. \\
Due to the $\kappa$-dependent stability estimates it is natural to introduce the $\kappa$-weighted norms
\begin{align*}
	\| v \|_{H^1_\kappa} := \Big( \tfrac{1}{\kappa^2} \| \nabla v \|_{L^2}^2 + \| v \|_{L^2}^2 \Big)^{1/2}, \quad 	\| v \|_{H^2_\kappa} := \Big( \tfrac{1}{\kappa^4} \| D^2 v \|_{L^2}^2 + \tfrac{1}{\kappa^2} \| \nabla v \|_{L^2}^2 + \| v \|_{L^2}^2 \Big)^{1/2}
\end{align*}
so that we have for any critical point $\ucol \in H^2(\Omega)$ the bounds $\| u \|_{H^1_\kappa} \lesssim 1$ and $\| u \|_{H^2_\kappa} \lesssim 1$.

\subsection{Gauge invariance and second order conditions}

If $u \in H^1(\Omega)$ is a critical point of $E$, a sufficient condition for $u$ to be an isolated minimizer is that the second Fr\'echet derivative $E''(u): H^1(\Omega \times H^1(\Omega) \rightarrow \R$ is positive definite. It is given by
{\begin{align*}
	\langle E^{\prime\prime}(\ucol) {v} , {w} \rangle = \Re   \int_\Omega \bigl( \tfrac{\ci}{\kappa} {\nabla v} + \bfA {v} \bigr)  \cdot \overline{\bigl( \tfrac{\ci}{\kappa} \nabla {w} + \bfA {w} \bigr)} + ( |\ucol|^2 -1  ) {v} \overline{{w}} + 2 \,\Re\!(\ucol \overline{{v}}) \, \ucol \overline{{w}} \dx,
\end{align*}
but, as we shall see, this form cannot be positive definite. A simple observation is that for any $\theta \in [0,2\pi)$ we have
\begin{align*}
	E(u) = E(e^{i\theta} u).
\end{align*}
This property, known as gauge invariance of the energy functional, implies that if $u \in H^1(\Omega)$ is a minimizer of $E$, then so is $e^{i\theta} u$. In fact, the entire orbit $\{ \gamma(\theta) \}_{\theta \in [0,2\pi)}$, where
\begin{align*}
	\gamma: [0,2\pi) \rightarrow H^1(\Omega), \quad \theta \mapsto e^{i\theta} u,
\end{align*}
forms a continuum of minimizers. From the first order optimality condition it follows that
\begin{align*}
	0 = \frac{\mathrm{d}^2}{ \mathrm{d} \theta^2} E(\gamma(\theta))_{| \theta = 0} = \langle E''(\gamma(0)) \gamma'(0), \gamma'(0) \rangle + \langle E'(\gamma(0)), \gamma''(0) \rangle = \langle E''(u) \ci u, \ci u \rangle
\end{align*}
which shows that $E''(u)$ is not positive definite. In particular, $\ci u$ is an eigenfunction of $E''(u)$ to a zero eigenvalue. However, a natural assumption is that this is the only singular direction so that $E''(u)$ is positive definite on the orthogonal complement $(\ci u)^\bot$. We call this property quasi-isolation of a minimizer.

\begin{definition}[Quasi-isolated minimizer]
A function $u \in H^1(\Omega)$ is called a \textit{quasi-isolated} (local) minimizer if it satisfies
\begin{align*}
	\langle E'(u), v \rangle = 0 \quad \text{for all } v \in H^1(\Omega)
\end{align*}
and
\begin{align*}
		\langle E''(u) v, v \rangle > 0 \quad \text{for all } v \in (\ci u)^\bot \setminus \{ 0 \},
\end{align*}
where $\orthiu := \{ v \in H^1(\Omega): \, (\ci u, v) = 0 \}$ is the $L^2$-orthogonal complement of $\ci u$ in $H^1(\Omega)$. Recall here that orthogonality is only with respect to the real part of the integral. If in addition \eqref{minimizer-energy-def} holds, then $u$ is called a \textit{quasi-isolated global minimizer}.
\end{definition}

Unfortunately, it remains an open problem whether $\ci u$ is the only singular direction of $E''(u)$ assuming that $u$ is a local or global minimizer, and we are not aware of any rigorous justification. Nevertheless, once a local minimizer has been computed numerically, one can at least verify if it is quasi-isolated by evaluating the spectrum of $E''(u)$, as it was done in \cite{BDH25}. In this case, quasi-isolation requires that the spectrum of $E''(u)$ contains a simple zero eigenvalue, while all other eigenvalues are strictly positive. \\

We will conclude this section by presenting some preliminary results regarding the bilinear form induced by the second Fr\'echet derivative, $E''(u)$, which we will use in our analysis later on. \\
 
A direct consequence of the quasi-isolation of a minimizer is that the bilinear form $\langle E''(u) \cdot, \cdot \rangle$ is coercive on $(\ci u)^\bot$.

\begin{lemma}{\cite[Prop. 2.2]{BDH25}} \label{lem:coercivity}
Let {$\Omega \subset \R^{d}$} ({$d=2,3$}) be a bounded Lipschitz-domain, \ref{A1}-\ref{A2} be fulfilled, and let $u \in H^1(\Omega)$ be a quasi-isolated minimizer of $E$. Then there exists a constant $\rho = \rho(\kappa) > 0$ such that
\begin{align*}
	\langle E''(u) v ,v \rangle \ge \rho(\kappa)^{-1} \| v \|_{H^1_\kappa}^2 \quad \text{for all } v \in (\ci u)^\bot \setminus \{ 0 \}.
\end{align*}
Furthermore, we have
\begin{align*}
	\langle E''(u) v ,w \rangle \lesssim \| v \|_{H^1_\kappa} \| w \|_{H^1_\kappa} \quad \text{for all } v, w \in H^1_\kappa(\Omega).
\end{align*}
Note that the continuity of $E^{\prime\prime}(u)$ also implies $\Csol \gtrsim 1$.
\end{lemma}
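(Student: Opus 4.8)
The statement splits into two parts, an upper bound (continuity) and a lower bound (coercivity), and the plan is to treat them separately since the first is a routine estimate while the second needs a compactness argument. For the fixed data $\Omega$, $\bfA$, $\kappa$ and a fixed quasi-isolated minimizer $u$, the constant $\Csol$ is allowed to depend on $\kappa$, so no $\kappa$-uniformity is required; this makes the argument cleaner.

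First I would establish continuity. Writing out $\langle E''(u)v,w\rangle$ and using that $\bfA \in L^\infty$ (Assumption \ref{A1}) together with the pointwise bound $|u|\le 1$ from Lemma~\ref{lem:existence}, each of the three integrands is controlled: for the first-order term the triangle inequality gives $\|\tfrac{\ci}{\kappa}\nabla v + \bfA v\|_{L^2} \lesssim \|v\|_{H^1_\kappa}$ with a constant depending only on $\|\bfA\|_{L^\infty}$, while the two zeroth-order terms obey $|(|u|^2-1)v\overline w|\le |v||w|$ and $|2\Re(u\overline v)u\overline w|\le 2|v||w|$. Cauchy–Schwarz then yields $\langle E''(u)v,w\rangle \lesssim \|v\|_{H^1_\kappa}\|w\|_{H^1_\kappa}$.

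For coercivity the key preliminary step is a G{\aa}rding-type inequality. Evaluating at $w=v$ gives $\langle E''(u)v,v\rangle = \int_\Omega |\tfrac{\ci}{\kappa}\nabla v + \bfA v|^2 + (|u|^2-1)|v|^2 + 2(\Re(u\overline v))^2 \dx$; the last integrand is nonnegative and may be dropped, while $|u|^2-1\ge -1$ bounds the middle term below by $-\|v\|_{L^2}^2$. Since $\tfrac{\ci}{\kappa}\nabla v = (\tfrac{\ci}{\kappa}\nabla v + \bfA v) - \bfA v$, the triangle inequality and $\bfA\in L^\infty$ give $\tfrac{1}{\kappa}\|\nabla v\|_{L^2}\lesssim \|\tfrac{\ci}{\kappa}\nabla v + \bfA v\|_{L^2} + \|v\|_{L^2}$, hence a bound of the form $\langle E''(u)v,v\rangle \ge c_0\|v\|_{H^1_\kappa}^2 - c_1\|v\|_{L^2}^2$ with $c_0,c_1>0$ depending on $\|\bfA\|_{L^\infty}$. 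I would then upgrade this to genuine coercivity on $\orthiu$ by a contradiction argument: if no $\Csol$ existed, there would be $v_n \in \orthiu$ with $\|v_n\|_{H^1_\kappa}=1$ and $\langle E''(u)v_n,v_n\rangle\to 0$. As $\kappa$ is fixed, $(v_n)$ is bounded in $H^1(\Omega)$, so by Rellich compactness a subsequence converges weakly in $H^1$ and strongly in $L^2$ to some $v$. The G{\aa}rding bound forces $\|v_n\|_{L^2}$ to stay bounded away from zero, so $v\neq 0$, and since $\orthiu$ is $L^2$-closed we get $v\in\orthiu$. Convexity and continuity make $v\mapsto\|\tfrac{\ci}{\kappa}\nabla v+\bfA v\|_{L^2}^2$ weakly lower semicontinuous, while the two zeroth-order terms pass to the limit under strong $L^2$ convergence (using $|u|\le 1$); combining these gives $\langle E''(u)v,v\rangle\le\liminf\langle E''(u)v_n,v_n\rangle=0$, contradicting quasi-isolation.

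I expect the main obstacle to be precisely this last upgrade step: quasi-isolation only supplies pointwise positivity on $\orthiu\setminus\{0\}$ and not a uniform lower bound, so the compactness argument — and in particular the correct splitting into a weakly lower semicontinuous principal part and an $L^2$-continuous zeroth-order remainder — is what does the real work. Once both bounds are in hand, the concluding remark $\Csol\gtrsim 1$ follows immediately by testing coercivity and continuity against the same nonzero $v\in\orthiu$ (which exists, the space being infinite-dimensional), giving $\Csolinv\lesssim 1$.
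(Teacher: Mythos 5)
Your proposal is correct, and both halves are sound: the continuity estimate is exactly the paper's (straightforward) argument via $|u|\le 1$, $\bfA\in L^\infty$ and Cauchy--Schwarz, and your G{\aa}rding inequality coincides with the paper's Lemma~\ref{senv-lemma-gaarding} (with $c_{\bfA}=\tfrac32+\|\bfA\|_{L^\infty}^2$). Where you differ from the paper is in the upgrade from G{\aa}rding to coercivity on $\orthiu$. The paper (following \cite[Prop.~2.2]{BDH25}) invokes the Courant--Fischer theorem: the G{\aa}rding inequality together with the compact embedding $H^1(\Omega)\hookrightarrow L^2(\Omega)$ yields a discrete spectrum for $E''(u)$, the min--max characterization identifies the best coercivity constant on $\orthiu$ with the smallest eigenvalue there, and quasi-isolation says precisely that this eigenvalue is positive. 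You instead run a direct contradiction argument: a normalized sequence $v_n\in\orthiu$ with $\langle E''(u)v_n,v_n\rangle\to 0$ has, by Rellich, a subsequence converging weakly in $H^1$ and strongly in $L^2$; G{\aa}rding keeps $\|v_n\|_{L^2}$ away from zero so the limit $v$ is a nonzero element of the $L^2$-closed set $\orthiu$; and splitting the form into the convex, weakly lower semicontinuous principal part $\|\tfrac{\ci}{\kappa}\nabla\cdot+\bfA\cdot\|_{L^2}^2$ plus zeroth-order terms that pass to the limit under strong $L^2$ convergence forces $\langle E''(u)v,v\rangle\le 0$, contradicting quasi-isolation. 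Both routes rest on the same two pillars (G{\aa}rding plus compactness), but yours is more elementary in that it avoids setting up the spectral theory of $E''(u)$ altogether; what the paper's spectral route buys in exchange is an explicit interpretation of $\rho(\kappa)^{-1}$ as the spectral gap of $E''(u)$ on $\orthiu$, which is exactly the quantity the authors later compute numerically to verify quasi-isolation of computed minimizers. Your concluding observation that $\rho(\kappa)\gtrsim 1$ follows by playing coercivity against continuity on any nonzero $v\in\orthiu$ is also the intended argument.
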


\begin{proof}
The coercivity is proved by applying the Courant-Fischer theorem and using a G{\aa}rding-inequality for $\langle E''(u) \cdot,\cdot \rangle$ on $H^1(\Omega)$, see \cite[Prop. 2.2]{BDH25}. The proof of the continuity is straightforward using the pointwise bounds from Lemma \ref{lem:existence} and \ref{A1}. 
\end{proof}
As in the case of elliptic operators on standard Sobolev spaces, the operator $E''(u)$ exhibits higher regularity on $\orthiu$ when the domain is convex.
\begin{lemma}[Regularity of solutions to $E^{\prime\prime}(u)z = {f}$ in $\orthiu$]
\label{senv-theorem-reg-secE-pde}
Assume the setting of Lemma~\ref{lem:coercivity}, and suppose that {$\Omega \subset \R^{d}$} is convex. Then, for every {${f} \in L^2(\Omega)$}, there exists a unique solution {$z \in \orthiu \cap H^2(\Omega)$} of
\begin{align*}
\langle E^{\prime\prime}(u) z , v \rangle
= (f, v)
\qquad
\text{for all } v \in \orthiu.
\end{align*}
Moreover, the following estimates hold:
\begin{eqnarray*}
\| z \|_{H^1_{\kappa}}
&\lesssim& \Csol \,
\| f \|_{L^2}
\qquad\mbox{and}\qquad
\tfrac{1}{\kappa^2} | z |_{H^2(\Omega)}
\,\,\,\lesssim\,\,\,
 \Csol \,
\| f \|_{L^2},
\end{eqnarray*}
where $\Csolinv$ denotes the coercivity constant from Lemma~\ref{lem:coercivity}. Finally, $z \in H^2(\Omega)$ fulfills  
\begin{align}
\label{alt-char-z}
(\tfrac{\ci}{\kappa} \nabla z + \bfA z , \tfrac{\ci}{\kappa} \nabla v + \bfA v ) = ( g ,v) \qquad \mbox{for all } v\in H^1(\Omega), 
\end{align}
where 
$g := f + (1 \!-\!  |u|^2  ) z - 2\, \Re  (u \overline{z}) \, u -  \| u\|_{L^2}^{-2} (  f,   \ci u  )  \, \ci u$. 
\end{lemma}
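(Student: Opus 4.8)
The plan is to proceed in three stages: first establish existence, uniqueness, and the $H^1_\kappa$-bound on the constrained space $\orthiu$ via Lax--Milgram; then upgrade the constrained variational identity to one holding for all test functions in $H^1(\Omega)$, which produces the explicit right-hand side $g$; and finally bootstrap to $H^2$ by recognizing the resulting equation as a perturbed homogeneous Neumann problem and invoking convex-domain elliptic regularity.

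For the first stage I would apply Lax--Milgram on the closed subspace $\orthiu \subset H^1(\Omega)$ equipped with $\|\cdot\|_{H^1_\kappa}$. The coercivity estimate $\langle E''(u)v,v\rangle \ge \Csolinv\|v\|_{H^1_\kappa}^2$ on $\orthiu$ and the continuity bound, both from Lemma~\ref{lem:coercivity}, are exactly the required hypotheses, while $v \mapsto (f,v)$ is bounded since $|(f,v)| \le \|f\|_{L^2}\|v\|_{L^2} \le \|f\|_{L^2}\|v\|_{H^1_\kappa}$. Testing the resulting identity with $v=z$ and using coercivity gives $\Csolinv\|z\|_{H^1_\kappa}^2 \le (f,z) \le \|f\|_{L^2}\|z\|_{H^1_\kappa}$, hence the first asserted bound $\|z\|_{H^1_\kappa}\lesssim\Csol\|f\|_{L^2}$.

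The second stage is the conceptual core. The key observation is that $\ci u$ lies in the kernel of $E''(u)$ in the strong sense $\langle E''(u)\,\ci u, w\rangle = 0$ for all $w \in H^1(\Omega)$, not merely $\langle E''(u)\,\ci u,\ci u\rangle = 0$. This follows by differentiating the gauge identity: since $E(e^{\ci\theta}v)=E(v)$ for every $v$, the function $e^{\ci\theta}u$ is a critical point for all $\theta$, so $\langle E'(e^{\ci\theta}u),w\rangle \equiv 0$, and differentiating in $\theta$ at $\theta=0$ yields the claim. Given this, I decompose an arbitrary $v \in H^1(\Omega)$ as $v = v^\perp + \|u\|_{L^2}^{-2}(v,\ci u)\,\ci u$ with $v^\perp \in \orthiu$, and use the symmetry of $E''(u)$ together with the kernel property to compute $\langle E''(u)z,v\rangle = \langle E''(u)z,v^\perp\rangle = (f,v^\perp)$. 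Writing $(f,v^\perp) = (f,v) - \|u\|_{L^2}^{-2}(f,\ci u)(\ci u,v)$ and substituting the explicit form of $E''(u)$ recovers precisely identity~\eqref{alt-char-z} with $g$ exactly as stated.

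For the third stage I would rewrite \eqref{alt-char-z} as a Neumann--Laplace problem. Isolating $\tfrac{1}{\kappa^2}(\nabla z,\nabla v)$ and integrating the cross term containing $\nabla v$ by parts generates a first-order contribution $\tfrac{2\ci}{\kappa}\bfA\cdot\nabla z$; here the gauge conditions $\div\bfA = 0$ and $\bfA\cdot\boldsymbol{n}=0$ from \ref{A1} are precisely what make the boundary term vanish and collapse the divergence. One arrives at $(\nabla z,\nabla v) = (\kappa^2 \hat g, v)$ for all $v\in H^1(\Omega)$, with $\hat g := g - \tfrac{2\ci}{\kappa}\bfA\cdot\nabla z - |\bfA|^2 z \in L^2(\Omega)$, i.e.\ the weak homogeneous Neumann problem for $-\Delta z = \kappa^2\hat g$. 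Convexity of $\Omega$ (\ref{A0}) then yields $z \in H^2(\Omega)$ with $|z|_{H^2(\Omega)}\lesssim\kappa^2\|\hat g\|_{L^2}$, and it remains to bound $\|\hat g\|_{L^2}\lesssim\Csol\|f\|_{L^2}$, which combines $\|\bfA\|_{L^\infty}\lesssim 1$, the pointwise bound $|u|\le 1$ from Lemma~\ref{lem:existence} (controlling the nonlinear terms of $g$), and the first-stage estimate, using $\Csol\gtrsim 1$. I expect this integration-by-parts step to be the main obstacle, since it is where the gauge structure of $\bfA$ must be exploited precisely, both to discard the boundary contributions and to certify that the perturbation $\hat g$ genuinely lies in $L^2(\Omega)$.
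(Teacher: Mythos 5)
Your proposal is correct, and its skeleton coincides with the paper's: Lax--Milgram on $\orthiu$ using the coercivity and continuity from Lemma~\ref{lem:coercivity}, the orthogonal splitting $v = v^{\perp} + \|u\|_{L^2}^{-2}(v,\ci u)\,\ci u$ to upgrade the variational identity from test functions in $\orthiu$ to all of $H^1(\Omega)$, and convex-domain elliptic regularity. Two differences in execution are worth recording. First, where you use the strong kernel property $\langle E''(u)\,\ci u, w\rangle = 0$ for all $w \in H^1(\Omega)$ (obtained by differentiating $\langle E'(e^{\ci\theta}u),w\rangle \equiv 0$ in $\theta$) combined with the symmetry of $E''(u)$, the paper instead inserts the equivalent fact $E'(\ci u)=0$ into a direct computation with the form $a(\cdot,\cdot)$ and separately exploits $(2\,\Re(u\overline{z})\,u,\ci u)=0$; these are the same identity, and your packaging is the tidier of the two. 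Second, the paper obtains the $H^2$ bound by citing \cite[Corollary 2.9]{DH24} and only afterwards derives the characterization \eqref{alt-char-z}, whereas you derive \eqref{alt-char-z} first --- which needs only $z \in H^1(\Omega)$ from the Lax--Milgram step --- and then bootstrap regularity from it: integrating the cross term by parts with $\div \bfA = 0$ and $\bfA\cdot\boldsymbol{n}=0$ reduces \eqref{alt-char-z} to the homogeneous Neumann Laplacian with datum $\kappa^2 \hat g \in L^2(\Omega)$, and the bound $\|\hat g\|_{L^2} \lesssim \Csol \|f\|_{L^2}$ (via $|u|\le 1$ from Lemma~\ref{lem:existence}, $\|\bfA\|_{L^{\infty}} \lesssim 1$, the first-stage estimate, and $\Csol \gtrsim 1$) yields $\tfrac{1}{\kappa^2}|z|_{H^2} \lesssim \Csol\|f\|_{L^2}$. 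This makes your argument self-contained where the paper delegates to a reference, at no cost in hypotheses; it is essentially the argument the cited corollary encapsulates.
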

\begin{proof}
Existence of $z\in \orthiu$ and the stability bounds are a direct consequence of Lemma \ref{lem:coercivity}.
The proof of the regularity statement is given in \cite[Corollary 2.9]{DH24} and relies on standard elliptic regularity theory in combination with a decomposition of $z$ according to the orthogonal splitting $H^1(\Omega) = \mbox{span} \{ \ci u \} \oplus \orthiu$. The splitting also allows to write $z $ in the characterization \eqref{alt-char-z}. For this, we proceed as in \cite{DH24} and note that by the definition of $E^{\prime\prime}(u)$, we have
\begin{align*}
	( \tfrac{\ci}{\kappa} {\nabla z} + \bfA {z} , \tfrac{\ci}{\kappa} \nabla v^{\perp} + \bfA v^{\perp} ) \,=\, (\,f + (1 \!-\!  |u|^2  ) z - 2\, \Re  (u \overline{z}) \, u  , v^{\perp} ) =: (f_{u,z} , v^{\perp}  )
\end{align*}
for all $v^{\perp} \in \orthiu$. For arbitrary $v \in H^1(\Omega)$ we write $v = v^{\perp}+ \alpha_v (\ci u)$, where $v^{\perp} \in \orthiu$ is the $L^2$-projection of $v$ onto $\orthiu$ and $\alpha_v := (v,\ci u) \| u \|_{L^2}^{-2}$. 

Consequently, we obtain
\begin{eqnarray*}
( \tfrac{\ci}{\kappa} {\nabla z} + \bfA {z} , \tfrac{\ci}{\kappa} \nabla v + \bfA v )
&=& 
\alpha_v \, ( \tfrac{\ci}{\kappa} {\nabla z} + \bfA {z} , \tfrac{\ci}{\kappa} \nabla (\ci u ) + \bfA (\ci u) )
+ (f_{u,z} , v^{\perp}  ) \\
&\overset{E^{\prime}(\ci u) = 0}{=}& \alpha_v ( (1- |u|^2) \ci u , z ) + (f_{u,z} , v^{\perp}  ) \\
&=&  \alpha_v ( (1- |u|^2) \ci u , z ) + (f_{u,z} , v -\alpha_v \ci u  ).  
\end{eqnarray*}
The statement follows with $\alpha_v = (v,\ci u) \| u \|_{L^2}^{-2}$ and
exploitng $(  2\, \Re  (u \overline{z}) \, u,   \ci u  )=0$.
\end{proof}

Despite the general lack of coercivity, the operator $E^{\prime\prime}(u)$ always fulfills a G{\aa}rding inequality.

\begin{lemma}[G{\aa}rding inequality for $\langle E^{\prime\prime}(u) \,\cdot\,,\cdot \rangle$] 
\label{senv-lemma-gaarding}
Let $\Omega \subset \R^{d}$, $d \in \{2,3\}$, be a bounded Lipschitz domain.  
Assume that conditions \ref{A1}--\ref{A2} hold, and let $w \in H^1(\Omega)$ be arbitrary.  
Then the following G{\aa}rding-type inequality is satisfied:
\begin{align*}
\langle E^{\prime\prime}(w)v, v \rangle 
\,\,\ge\,\, \tfrac{1}{2}\, \|v\|_{H^1_{\kappa}}^2 
- c_{\bfA}\, \|v\|_{L^2}^2,
\qquad \text{for all } v \in H^1(\Omega),
\end{align*}
where $c_{\bfA} = \tfrac{3}{2} + \|\bfA\|_{L^{\infty}(\Omega)}^2$.
\end{lemma}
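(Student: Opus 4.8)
The plan is to prove the Gårding inequality by writing out $\langle E''(w)v,v\rangle$ explicitly, isolating the genuinely positive leading term, and absorbing the remaining lower-order contributions into an $L^2$-penalty. Concretely, I would start from the formula for the second derivative applied diagonally,
\begin{align*}
\langle E''(w)v,v\rangle
= \Re\int_\Omega \bigl| \tfrac{\ci}{\kappa}\nabla v + \bfA v \bigr|^2
+ (|w|^2-1)|v|^2 + 2\,\Re(w\overline{v})^2 \dx,
\end{align*}
noting that the last term $2\,\Re(w\overline{v})\,w\overline{v}$ contributes $2\,\Re(w\overline{v})^2 \ge 0$, so it can simply be discarded when bounding from below.

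The first step is to handle the magnetic kinetic term. The key algebraic identity is
\begin{align*}
\Bigl| \tfrac{\ci}{\kappa}\nabla v + \bfA v \Bigr|^2
= \tfrac{1}{\kappa^2}|\nabla v|^2 + |\bfA|^2 |v|^2
+ \tfrac{2}{\kappa}\,\Re\bigl( \ci\nabla v \cdot \overline{\bfA v}\bigr),
\end{align*}
where the cross term is indefinite. I would estimate the cross term using Young's inequality in the form $\tfrac{2}{\kappa}|\nabla v||\bfA||v| \le \tfrac{1}{2\kappa^2}|\nabla v|^2 + 2\|\bfA\|_{L^\infty}^2 |v|^2$ (pointwise, then integrated). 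This retains at least $\tfrac{1}{2\kappa^2}\|\nabla v\|_{L^2}^2$ of the gradient term while generating a negative $L^2$-contribution of size $2\|\bfA\|_{L^\infty}^2\|v\|_{L^2}^2$. The second step is the potential term: since $|w|^2 - 1 \ge -1$ pointwise, we have $\int_\Omega(|w|^2-1)|v|^2 \ge -\|v\|_{L^2}^2$. Collecting everything and recalling $\|v\|_{H^1_\kappa}^2 = \tfrac{1}{\kappa^2}\|\nabla v\|_{L^2}^2 + \|v\|_{L^2}^2$, the surviving gradient term $\tfrac{1}{2\kappa^2}\|\nabla v\|_{L^2}^2$ is exactly $\tfrac12$ of the gradient part of the weighted norm, so I would also sacrifice a matching $\tfrac12\|v\|_{L^2}^2$ from a favorable source to complete the $\tfrac12\|v\|_{H^1_\kappa}^2$ lower bound.

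Tracking the constant carefully is the one subtle bookkeeping point, since the target constant $c_{\bfA} = \tfrac32 + \|\bfA\|_{L^\infty}^2$ is stated precisely. The $\tfrac32$ presumably decomposes as $1$ from the potential bound plus $\tfrac12$ from the $L^2$-part subtracted to build the full $H^1_\kappa$-norm, and the $\|\bfA\|_{L^\infty}^2$ should come from a sharper version of the Young estimate on the cross term (splitting as $\tfrac{1}{2\kappa^2}|\nabla v|^2 + 2|\bfA|^2|v|^2$ would yield $2\|\bfA\|_{L^\infty}^2$, so I would instead weight Young's inequality to leave the coefficient of $|\bfA|^2|v|^2$ equal to $1$ while still keeping $\tfrac{1}{2\kappa^2}$ of the gradient). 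I expect the main obstacle to be precisely this calibration: choosing the Young-inequality weight so that the gradient coefficient lands at $\tfrac{1}{2\kappa^2}$ and the magnetic coefficient at exactly $\|\bfA\|_{L^\infty}^2$ simultaneously, then verifying the remaining additive constant is exactly $\tfrac32$ rather than something slightly larger. Everything else is a direct pointwise estimate requiring no compactness or regularity theory, so the proof should be short once the constant is pinned down.
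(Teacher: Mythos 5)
Your approach is the same as the paper's (which itself only sketches the argument and cites [CFH25, Lemma 4.1]): expand the magnetic kinetic term, control the cross term by Young's inequality, bound the potential term pointwise by $-|v|^2$, and discard the nonnegative term $2(\Re(w\overline{v}))^2$. The proof goes through exactly as you outline, but your flagged ``main obstacle'' rests on a misconception, and your proposed fix is actually impossible: in Young's inequality $\tfrac{2}{\kappa}|\nabla v|\,|\bfA|\,|v| \le \tfrac{\epsilon}{\kappa^2}|\nabla v|^2 + \tfrac{1}{\epsilon}|\bfA|^2|v|^2$ the two weights are reciprocal, so you cannot keep the gradient coefficient at $\tfrac{1}{2\kappa^2}$ (i.e.\ $\epsilon=\tfrac12$) while pushing the magnetic coefficient down to $1$. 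No re-weighting is needed, however, because you dropped the retained positive term $+|\bfA|^2|v|^2$ from the expansion of $|\tfrac{\ci}{\kappa}\nabla v + \bfA v|^2$ in your accounting: with the standard split the net magnetic coefficient is $1-2=-1$, so
\begin{align*}
\Bigl|\tfrac{\ci}{\kappa}\nabla v + \bfA v\Bigr|^2 \;\ge\; \tfrac{1}{2\kappa^2}|\nabla v|^2 - |\bfA|^2|v|^2,
\end{align*}
and together with the potential bound and the sacrificed $\tfrac12\|v\|_{L^2}^2$ this lands exactly on $c_{\bfA} = \tfrac32 + \|\bfA\|_{L^{\infty}}^2$ with no further calibration.
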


\begin{proof}
A proof is given in \cite[Lemma 4.1]{CFH25}. It is based on expanding the term $|\tfrac{\ci}{\kappa}\nabla v + \mathbf{A}v|^2$, applying Young's inequality to control the mixed term $ \nabla v \cdot \mathbf{A}v$, and then using $ |a|^2 + |b|^2 \ge \tfrac{1}{2}(|a| + |b|)^2$ to derive the desired G{\aa}rding-type lower bound for $\langle E''(\ucol)v, v\rangle$.
\end{proof}

\section{Minimizers in multiscale approximation spaces} 
\label{sec:error_analysis}

We now turn to the numerical approximation of minimizers of the Ginzburg--Landau energy in suitable multiscale spaces and establish the corresponding computational framework that was first introduced in \cite{BDH25} for the reduced GLE and later extended in \cite{DDH24} to the full model. 

As outlined in the introduction, we construct discrete approximation spaces using the multiscale method known as the Localized Orthogonal Decomposition (LOD). The core idea of the LOD is to design basis functions for the approximation space that are specifically adapted to the problem at hand. In our setting, the structure of the Ginzburg–Landau model enters through the bilinear form
\begin{align} \label{eq:abeta}
	a(v,w) := (\tfrac{\ci}{\kappa} \nabla v + \bfA v, \tfrac{\ci}{\kappa} \nabla w + \bfA w), \quad v,w \in H^1(\Omega).
\end{align}
In particular the bilinear form $a(\cdot,\cdot)$ consists of the kinetic part of the Ginzburg-Landau equation \eqref{eq:GL_variational}, induced by the so-called magnetic Neumann Laplacian $(\tfrac{\ci}{\kappa} \nabla + \mathbf{A})^2$.
Starting from the classical $H^1$-conforming $\mathcal{P}_1$-Lagrange finite element space
\begin{align*}
	V_h = \{ v_h \in H^1(\Omega): \, v_{h} |_T \in \mathcal{P}_1(T) \text{ for all } T \in \mathcal{T}_h \},
\end{align*}
where $\mathcal{T}_h$ denotes a quasi-uniform, conforming, shape regular mesh of $\Omega$, we define the $L^2$-projection
\begin{align*}
	\Ph: H^1(\Omega) \rightarrow V_h ,\quad (\Ph v -v, w_h) = 0 \quad \text{for all } w_h \in V_h.
\end{align*}
Its kernel on $H^1(\Omega)$
\begin{align*}
	W := \ker (\Ph)
\end{align*}
contains, roughly speaking, the fine-scale information that the finite element space 
$V_h$ cannot resolve, and is therefore often referred to as the \textit{detail space}. Although the bilinear form $a(\cdot,\cdot)$ does not define an inner product on the entire solution space $H^1(\Omega)$, due to its lack of coercivity, the following lemma from \cite{BDH25} establishes that it is coercive when restricted to the detail space $W$.

\begin{lemma} \label{lem:abeta_coercivity}
Let \ref{A0}-\ref{A2} be fulfilled. Then $a(\cdot,\cdot)$ is uniformly $H^1_{\kappa}$-continuous on $H^1(\Omega)$ with
\begin{align*}
	a(v,w) \,\,\,\lesssim\,\,\, \| v \|_{H^1_\kappa} \| w \|_{H^1_\kappa} \qquad \mbox{for all } v,w \in H^1(\Omega),
\end{align*}
and there exists a constant $C > 0$ that is independent of $\kappa$ and $h$ such that if $h \kappa \le C$, it holds
\begin{align*}
	a(v,v) \,\,\,\ge\,\,\, \frac{1}{4} \| v \|_{H^1_\kappa}^2 \qquad \text{for all } v \in W= \ker(\Ph).
\end{align*} 
\end{lemma}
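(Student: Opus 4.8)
The plan is to establish the two assertions separately, with the coercivity bound resting entirely on the defining approximation property of the detail space $W = \ker(\Ph)$. For the continuity claim I would simply apply the Cauchy--Schwarz inequality in the real $L^2$-inner product to obtain $a(v,w) \le \| \tfrac{\ci}{\kappa}\nabla v + \bfA v \|_{L^2}\, \| \tfrac{\ci}{\kappa}\nabla w + \bfA w \|_{L^2}$, and then bound each factor by the triangle inequality together with the $L^\infty$-bound on $\bfA$ from \ref{A1}, namely $\| \tfrac{\ci}{\kappa}\nabla v + \bfA v \|_{L^2} \le \tfrac{1}{\kappa}\|\nabla v\|_{L^2} + \|\bfA\|_{L^\infty}\|v\|_{L^2} \le (1+\|\bfA\|_{L^\infty})\,\|v\|_{H^1_\kappa}$. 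This yields the estimate with an implicit constant $(1+\|\bfA\|_{L^\infty})^2$ that is independent of $\kappa$ and $h$.

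For the coercivity on $W$ I would begin by expanding the square. Since $\bfA$ is real-valued, one gets $a(v,v) = \tfrac{1}{\kappa^2}\|\nabla v\|_{L^2}^2 + \|\bfA v\|_{L^2}^2 + \tfrac{2}{\kappa}\,\Re\!\int_\Omega \ci\,\nabla v \cdot \bfA\,\overline{v}\,\dx$. Here the gradient term is the desired positive contribution, the potential term is non-negative and may be discarded, and the indefinite cross term is the only threat to coercivity. The decisive input is that every $v \in W$ satisfies $\Ph v = 0$, so that the standard $L^2$-approximation estimate for piecewise linear finite elements gives $\|v\|_{L^2} = \|v - \Ph v\|_{L^2} \le C_{\mathrm{app}}\,h\,\|\nabla v\|_{L^2}$ with a constant $C_{\mathrm{app}}$ depending only on the mesh regularity.

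Using this Poincaré-type estimate I would control the cross term by Cauchy--Schwarz and the bound on $\bfA$: $\bigl|\tfrac{2}{\kappa}\Re\!\int_\Omega \ci\,\nabla v\cdot\bfA\,\overline{v}\,\dx\bigr| \le \tfrac{2}{\kappa}\|\bfA\|_{L^\infty}\|\nabla v\|_{L^2}\|v\|_{L^2} \le 2\,\|\bfA\|_{L^\infty}C_{\mathrm{app}}\,(h\kappa)\cdot\tfrac{1}{\kappa^2}\|\nabla v\|_{L^2}^2$, so that the cross term is of order $h\kappa$ relative to the gradient term. Discarding the potential term then yields $a(v,v) \ge \bigl(1 - 2\,\|\bfA\|_{L^\infty}C_{\mathrm{app}}\,h\kappa\bigr)\tfrac{1}{\kappa^2}\|\nabla v\|_{L^2}^2$. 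It remains to convert the pure gradient term back into the full $H^1_\kappa$-norm, which I would do with the same approximation estimate: from $\|v\|_{L^2}^2 \le C_{\mathrm{app}}^2(h\kappa)^2\,\tfrac{1}{\kappa^2}\|\nabla v\|_{L^2}^2$ one obtains $\|v\|_{H^1_\kappa}^2 \le \bigl(1 + C_{\mathrm{app}}^2(h\kappa)^2\bigr)\tfrac{1}{\kappa^2}\|\nabla v\|_{L^2}^2$. Combining the two displays gives the clean bound $a(v,v) \ge \tfrac{1 - 2\|\bfA\|_{L^\infty}C_{\mathrm{app}}\,h\kappa}{1 + C_{\mathrm{app}}^2(h\kappa)^2}\,\|v\|_{H^1_\kappa}^2$, and since the prefactor tends to $1$ as $h\kappa\to 0$, one may choose a threshold $C>0$ so that it exceeds $\tfrac14$ whenever $h\kappa \le C$, completing the proof.

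The main obstacle is the indefinite cross term $\tfrac{2}{\kappa}\Re\!\int_\Omega \ci\,\nabla v\cdot\bfA\,\overline{v}\,\dx$, which on the full space $H^1(\Omega)$ can offset the gradient contribution and genuinely destroy coercivity. The entire argument hinges on recognizing that restricting to the detail space $W$ turns the approximation estimate $\|v\|_{L^2}\lesssim h\|\nabla v\|_{L^2}$ into a quantitative smallness factor $h\kappa$, which simultaneously tames the cross term and guarantees that the gradient term dominates the full $\|\cdot\|_{H^1_\kappa}$-norm; this is precisely where the resolution condition $h\kappa \le C$ enters.
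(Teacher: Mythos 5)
Your proof is correct and follows essentially the same argument as the one the paper defers to in \cite{BDH25}: continuity via Cauchy--Schwarz together with the $L^\infty$-bound on $\bfA$ from \ref{A1}, and coercivity by expanding the square and taming the indefinite cross term $\tfrac{2}{\kappa}\Re\int_\Omega \ci\,\nabla v \cdot \bfA\,\overline{v}\,\dx$ through the kernel property $\|v\|_{L^2} = \|v-\Ph v\|_{L^2} \lesssim h\|\nabla v\|_{L^2}$ for $v \in \ker(\Ph)$, which yields the smallness factor $h\kappa$ and hence the resolution condition. No gaps; your observation that the same estimate both controls the cross term and lets the gradient seminorm dominate the full $H^1_\kappa$-norm is exactly the crux of the standard argument.
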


For the proof of Lemma \ref{lem:abeta_coercivity} we refer to \cite{BDH25}. Since $a(\cdot,\cdot)$ is coercive on the detail space $W$ we can define the corrector
\begin{align}
\label{corrctor-op-def}
	\Cor: H^1(\Omega) \rightarrow W, \quad a(\Cor v, w) = a(v,w) \quad \text{for all } w \in W.
\end{align}
In particular, due to Lemma \ref{lem:abeta_coercivity}, this corrector is an $H^1_\kappa$-stable projection onto $W$ since we have
\begin{align*}
 \| \Cor v \|_{H^1_\kappa}^2 \lesssim a( \Cor v , \Cor v ) =  a( v ,\Cor v ) \lesssim 
 \| \Cor v \|_{H^1_\kappa} \| v \|_{H^1_\kappa}, \quad \mbox{and thus,}
 \quad  \| \Cor v \|_{H^1_\kappa} \lesssim  \| v \|_{H^1_\kappa}.
\end{align*}
The desired multiscale approximation space for GL minimizers is now defined in the spirit of the LOD by
\begin{align} \label{eq:LODspace}
	\Vlod := (\mathrm{Id} - \Cor) V_h.
\end{align}

In this new approximation space we seek for minimizers of the GL energy: We call a function $\ulod \in \Vlod$ a \textit{discrete global minimizer} if it satisfies
\begin{align*}
	E(\ulod) = \min_{v \in \Vlod} E(v).
\end{align*}
Clearly, since $\Vlod \subset H^1(\Omega)$, we have that such a discrete global minimizer exists and $E(u) \le E(\ulod)$ holds. As in the continuous setting, we can also consider \textit{discrete local minimizers} which minimize the energy in a local neighborhood and we name $u$ a discrete minimizer if it is at least a discrete local minimizer. If $\ulod \in \Vlod$ is a discrete minimizer it satisfies the first order condition meaning that $\ulod$ is a critical point of $E$ in $\Vlod$, i.e,
\begin{align*}
	\langle E'(\ulod), v\rangle = 0 \quad \text{for all } v \in \Vlod.
\end{align*}
Furthermore, one shows in analogy to the continuous setting that $\| \ulod \|_{H^1_\kappa} \lesssim 1$. However, also discrete minimizers cannot be unique due to the symmetry under complex phase shifts, $E(\ulod) = E(e^{\ci \theta} \ulod)$ for all $\theta \in [0,2\pi)$. We need to take special care of this when comparing discrete minimizers $\ulod$ to exact minimizers $u$ on the entire space $H^1(\Omega)$ in the subsequent analysis.

\subsection{Main results}

We investigate the approximation error between exact minimizers $u \in H^1(\Omega)$ and discrete minimizers $\ulod \in \Vlod$. In particular, we estimate the errors
\begin{align*}
	E(\ulod) - E(\ucol), \quad  \| \ulod - \ucol\|_{H^1_\kappa}, \quad \| \ulod - \ucol \|_{L^2}
\end{align*}
under suitable resolution conditions on the mesh size $h$ in dependence on the GL parameter $\kappa$. \\

Our first main result concerns the error in energy for global minimizers under the weak resolution conditions $h \lesssim \kappa^{-1}$ and extends the energy estimate from \cite{BDH25}. The proof can be found in our error analysis, cf. Section \ref{sec:proof_energy_estimates}.

\begin{theorem}[Energy error estimate for global minimizers] \label{thm:energy_error_estimate}
Let \ref{A0}-\ref{A2} be fulfilled and let $\Vlod$ with $h \lesssim \kappa^{-1}$ be a family of LOD spaces. Then is holds
\begin{align*}
	0 \,\,\,\le \,\,\, \min_{\vlod \in \Vlod}E(\vlod) - \min_{v \in H^1(\Omega)}E(v) \,\,\,\lesssim \,\,\,\kappa^6 h^6.
\end{align*}
\end{theorem}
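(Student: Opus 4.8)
The plan is to dispatch the lower bound trivially and to obtain the upper bound by testing the discrete minimization against a single, carefully chosen function built from the corrector. Since $\Vlod \subset H^1(\Omega)$, we immediately have $\min_{\vlod \in \Vlod} E(\vlod) \ge \min_{v \in H^1(\Omega)} E(v)$, which is the asserted nonnegativity. For the upper bound, let $u$ be a global minimizer and consider the canonical LOD approximation $\tilde u := (\mathrm{Id} - \Cor)u$. Because $u - \Ph u \in W = \ker(\Ph)$ and $\Cor$ restricts to the identity on $W$, one checks $\tilde u = (\mathrm{Id}-\Cor)\Ph u \in \Vlod$, so that $\min_{\vlod \in \Vlod} E(\vlod) \le E(\tilde u)$, and the whole error is governed by $e := \tilde u - u = -\Cor u \in W$.

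The heart of the matter is a superconvergent corrector estimate $\|\Cor u\|_{H^1_\kappa} \lesssim \kappa^3 h^3$. First I would use the defining relation $a(\Cor u, w) = a(u,w)$ for $w \in W$ together with the Ginzburg--Landau equation \eqref{eq:GL_variational} tested against $w \in W$, which yields $a(\Cor u, w) = ((1-|u|^2)u, w)$. Since $\Cor u \in W$ is $L^2$-orthogonal to $V_h$, I may insert $(\mathrm{Id}-\Ph)$ on the right-hand side, writing $a(\Cor u, w) = ((\mathrm{Id}-\Ph)[(1-|u|^2)u], w)$. Coercivity of $a$ on $W$ (Lemma \ref{lem:abeta_coercivity}), the detail-space bound $\|w\|_{L^2} \lesssim h\|\nabla w\|_{L^2}$ for $w \in W$, the projection estimate $\|(\mathrm{Id}-\Ph)g\|_{L^2} \lesssim h^2 |g|_{H^2}$, and the regularity bound $|(1-|u|^2)u|_{H^2} \lesssim \kappa^2$ (which follows from $\|\nabla u\|_{L^4}\lesssim\kappa$ and $\|D^2 u\|_{L^2}\lesssim\kappa^2$ of Lemma \ref{lem:existence}) then combine to give $\|\Cor u\|_{H^1_\kappa} \lesssim h\kappa \cdot h^2\kappa^2 = \kappa^3 h^3$. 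The same chain also produces the sharper $L^2$ bound $\|\Cor u\|_{L^2} \lesssim \kappa^4 h^4$.

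With the corrector estimate in hand, I would expand the energy around the critical point $u$. Using $\langle E'(u), e\rangle = 0$, Taylor's theorem gives
\begin{align*}
E(\tilde u) - E(u) = \tfrac12 \langle E''(u) e, e\rangle + \int_0^1 \tfrac{(1-t)^2}{2}\, E'''(u+te)[e,e,e]\,\mathrm{d}t.
\end{align*}
The quadratic term is controlled by the continuity of $E''(u)$ (Lemma \ref{lem:coercivity}), giving $\tfrac12\langle E''(u)e,e\rangle \lesssim \|e\|_{H^1_\kappa}^2 \lesssim \kappa^6 h^6$, which already furnishes the claimed rate. Since the potential is quartic, the remainder reduces to cubic and quartic contributions bounded by $\int |u||e|^3 + \int |e|^4 \lesssim \|e\|_{L^3}^3 + \|e\|_{L^4}^4$.

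Controlling this nonlinear remainder under the \emph{weak} resolution condition $h \lesssim \kappa^{-1}$ is the main obstacle. A direct Gagliardo--Nirenberg interpolation between $\|e\|_{L^2} \lesssim \kappa^4 h^4$ and $\|\nabla e\|_{L^2}\lesssim\kappa^4 h^3$ loses extra powers of $\kappa$ in the cubic and quartic terms, and therefore does \emph{not} by itself deliver $\kappa^6 h^6 = (\kappa h)^6$ without strengthening the coupling of $h$ to $\kappa$. The resolution -- and the point where the refined analysis of \cite{CFH25} enters -- is to exploit the pointwise bound $|u|\le 1$ together with a uniform bound on the corrector of the form $\|\Cor u\|_{L^\infty}\lesssim(\kappa h)^{-1}$. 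Combined with $\|e\|_{L^2}\lesssim\kappa^4 h^4$ and the elementary estimates $\|e\|_{L^3}^3\le\|e\|_{L^\infty}\|e\|_{L^2}^2$ and $\|e\|_{L^4}^4\le\|e\|_{L^\infty}^2\|e\|_{L^2}^2$, this bounds the remainder by $\lesssim(\kappa h)^{-2}(\kappa^4 h^4)^2 = \kappa^6 h^6$, matching the quadratic term. I expect the derivation of such a uniform bound on the corrector, rather than the energy expansion itself, to be the technically delicate step on which the sharpness of the resolution condition hinges.
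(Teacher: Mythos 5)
Your overall architecture coincides with the paper's proof: the lower bound is dispatched identically, your test function $\tilde u=(\mathrm{Id}-\Cor)u=(\mathrm{Id}-\Cor)\Ph u$ is exactly the paper's projection $\Rlod u$ (so $e=-\Cor u = -(u-\Rlod u)$), your superconvergent bounds $\|\Cor u\|_{H^1_\kappa}\lesssim \kappa^3h^3$ and $\|\Cor u\|_{L^2}\lesssim \kappa^4h^4$ are derived by the same coercivity-plus-orthogonality chain as in Lemma~\ref{estimates-Ritz-projec} and Corollary~\ref{cor-quasi-best-approx}, and your Taylor expansion around the critical point is the analytic twin of the paper's exact algebraic identity for the quartic energy. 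The genuine gap is in your treatment of the nonlinear remainder: the entire estimate hinges on the uniform bound $\|\Cor u\|_{L^\infty}\lesssim (\kappa h)^{-1}$, which you do not prove, which the paper nowhere provides, and which does not follow from anything you have established. The corrector $\Cor u$ lives in the infinite-dimensional detail space $W=\ker(\Ph)$, not in a coarse finite element space, so no inverse inequality is available, and $H^1(\Omega)\not\hookrightarrow L^\infty(\Omega)$ for $d=2,3$, so the $H^1_\kappa$ bounds give no pointwise control. Flagging this step as ``technically delicate'' does not close it; as written the proof is incomplete precisely at the point you identify as its linchpin.

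Moreover, the obstacle you construct for yourself is not actually there, because no $L^\infty$ control is needed. The paper bounds the quartic contribution by a plain, \emph{non-superconvergent} $L^4$ estimate: splitting $u-\Rlod u=(u-\Ph u)+(\mathrm{Id}-\Ph)\Cor\Ph u$ and using the standard projection estimates $\|v-\Ph v\|_{L^4}\lesssim \kappa^\ell h^\ell\|v\|_{H^\ell_\kappa}$ for $\ell=1,2$, together with the $H^1_\kappa$-stability of $\Ph$ and the already established $\kappa^3h^3$ bound, one obtains $\|u-\Rlod u\|_{L^4}\lesssim \kappa^2h^2$. This is enough, since under $h\kappa\lesssim 1$ one has $\|e\|_{L^4}^4\lesssim \kappa^8h^8\le \kappa^6h^6$, and your cubic term follows from H\"older without any interpolation or pointwise bound:
\begin{align*}
\int_\Omega |u|\,|e|^3\dx \,\le\, \|e\|_{L^2}\,\|e\|_{L^4}^2 \,\lesssim\, \kappa^4h^4\cdot \kappa^4h^4 \,=\, \kappa^8 h^8 \,\le\, \kappa^6h^6 .
\end{align*}
Your observation that Gagliardo--Nirenberg interpolation of the two superconvergent bounds loses powers of $\kappa$ is correct, but the remedy is this direct $L^4$ projection estimate (the quartic term tolerates the non-superconvergent rate because it enters at fourth power), not a uniform bound on the corrector. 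Replacing your final paragraph by this argument closes the gap and makes your proof essentially identical to the paper's.
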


Thus the minimum discrete energy level $E(\ulod)$ converges to the exact minimum energy level $E(u)$. However, it remains unclear whether the sequence of discrete global minimizers converges to an exact global minimizer, or only converges up to subsequences. We will see that the convergence to an exact minimizer in $H^1_\kappa(\Omega)$ and $L^2(\Omega)$ requires stronger resolution conditions on $h$. This is the content of our second main result, which addresses the approximation of general global and local minimizers within the discrete multiscale space w.r.t. $H^1_\kappa(\Omega)$ and $L^2(\Omega)$. Again, the result extends the corresponding estimates obtained previously in \cite{BDH25}.

\begin{theorem}
\label{theorem:main-result}
Assume \ref{A0}--\ref{A2} and let $u \in H^1(\Omega)$ be quasi-isolated minimizer of $E$. If the following resolution conditions are fulfilled, 
$$
\rho(\kappa)\,(h\kappa)^2 \,\lesssim \, 1
\qquad\text{and}\qquad
\kappa^{d/2}\rho(\kappa)\,(h\kappa)^3 \,\lesssim \, 1,
$$
then there exists a discrete minimizer $\ulod \in \Vlod$ of $E$ with 
\begin{eqnarray*}
\langle E^{\prime}(\ulod) , v_h \rangle &=& 0 \hspace{93pt} \mbox{ for all } v_h \in \Vlod
\qquad
\mbox{and} \\
\langle E^{\prime\prime}(\ulod) v_h , v_h \rangle &\gtrsim& \rho(\kappa)^{-1} \| v_h \|_{H^1_{\kappa}}^2 \hspace{20pt} \mbox{ for all } v_h \in \Vlod\cap \orthiu
\end{eqnarray*}
such that
\begin{eqnarray*}
\| u - \ulod \|_{H^1_{\kappa}} &\lesssim&   (h\kappa)^3
\end{eqnarray*}
and 
\begin{align}
\label{final-L2-estimate}
\| u - \ulod \|_{L^2} \,\,\,\, \lesssim \,\,  \, (h\kappa)^4 \,+\, \rho(\kappa)\,(h\kappa)^5  +  \rho(\kappa) \,\kappa^{d/2} \,(h\kappa)^6. 
\end{align}
Note that the $L^2$-error estimate becomes asymptotically optimal, i.e., we have $\| u - \ulod \|_{L^2} \lesssim (h\kappa)^4$ for all sufficiently small $h$.
\end{theorem}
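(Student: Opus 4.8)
My plan is to construct $\ulod$ as a small perturbation of the exact minimizer $u$, letting the two pillars of the argument be the $a$-orthogonality $a(\Vlod,W)=0$ of the LOD splitting and the coercivity of $E''(u)$ on $\orthiu$ from Lemma~\ref{lem:coercivity}; the gauge degeneracy is handled by working throughout on the slice $\orthiu$ and fixing the phase of $\ulod$ by $(\ulod,\ci u)=0$. First I would prove the superconvergent best-approximation bound for the LOD quasi-interpolant $\tilde u:=(\mathrm{Id}-\Cor)\Ph u\in\Vlod$, whose error $e:=u-\tilde u$ lies in $W=\ker(\Ph)$. Using $a(\tilde u,e)=0$, the identity $a(e,e)=a(u,e)$, and the Ginzburg--Landau equation \eqref{eq:GL_variational} to replace $a(u,e)$ by $((1-|u|^2)u,e)$, I would then exploit $e\perp_{L^2}V_h$ to insert $\Ph$, giving $((1-|u|^2)u-\Ph[(1-|u|^2)u],e)$. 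Combining the coercivity on $W$ (Lemma~\ref{lem:abeta_coercivity}), the kernel Poincar\'e bound $\|e\|_{L^2}\lesssim h\kappa\|e\|_{H^1_\kappa}$, the second-order projection estimate $\|\phi-\Ph\phi\|_{L^2}\lesssim h^2|\phi|_{H^2}$, and the regularity $|(1-|u|^2)u|_{H^2}\lesssim\kappa^2$ (from $|u|\le1$, $\|\nabla u\|_{L^4}\lesssim\kappa$, $\|D^2u\|_{L^2}\lesssim\kappa^2$ of Lemma~\ref{lem:existence}), a single bootstrap yields $\|u-\tilde u\|_{H^1_\kappa}\lesssim(h\kappa)^3$ and hence $\|u-\tilde u\|_{L^2}\lesssim(h\kappa)^4$.

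Next I would establish the discrete minimizer and its second-order property. The coercivity of $E''(u)$ on $\orthiu$ restricts for free to $\Vlod\cap\orthiu$, and reading $\langle E'(\ulod),v_h\rangle=0$ as a perturbation of $\langle E'(u),\cdot\rangle=0$ I would set $\ulod=\tilde u+\psi$ with $\psi\in\Vlod\cap\orthiu$ and solve for $\psi$ by a Newton--Kantorovich/contraction argument. Its hypotheses are exactly the two resolution conditions: $\rho(\kappa)(h\kappa)^2\lesssim1$ guarantees that the coercivity of $E''(u)$ survives the perturbation $E''(u)\to E''(\ulod)$ (the difference being controlled by $\|u-\ulod\|$ in $L^2$/$L^4$), which also gives the asserted discrete inequality $\langle E''(\ulod)v_h,v_h\rangle\gtrsim\rho(\kappa)^{-1}\|v_h\|_{H^1_\kappa}^2$; and $\kappa^{d/2}\rho(\kappa)(h\kappa)^3\lesssim1$ makes the cubic term a contraction, the factor $\kappa^{d/2}$ entering through a Gagliardo--Nirenberg estimate of the $L^4$-norm in $d$ dimensions.

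For the error bounds I would use a Strang-type argument that again hinges on the LOD structure. Subtracting the two first-order conditions gives the exact Galerkin orthogonality $\langle B(u-\ulod),v_h\rangle=0$ for all $v_h\in\Vlod$, with the averaged Hessian $B:=\int_0^1E''(u+t(\ulod-u))\,\d t$. Testing the perturbed coercivity with $\psi=\ulod-\tilde u$ and splitting $\ulod-\tilde u=(\ulod-u)+e$, the orthogonality removes the $\ulod-u$ part and leaves $\langle Be,\psi\rangle$; here the $a$-orthogonality kills the kinetic contribution $a(e,\psi)=0$, and since $e\in\ker(\Ph)$ I can again move $\Ph$ onto the test direction to gain one more factor $h\kappa$, so that $\langle Be,\psi\rangle\lesssim\big((h\kappa)^5+\text{h.o.t.}\big)\|\psi\|_{H^1_\kappa}$. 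With the first resolution condition this gives $\|\psi\|_{H^1_\kappa}\lesssim\rho(\kappa)(h\kappa)^5\lesssim(h\kappa)^3$, hence $\|u-\ulod\|_{H^1_\kappa}\lesssim(h\kappa)^3$. The $L^2$-estimate \eqref{final-L2-estimate} follows from a nonlinear Aubin--Nitsche duality: I solve $\langle E''(u)z,v\rangle=(u-\ulod,v)$ on $\orthiu$, use Lemma~\ref{senv-theorem-reg-secE-pde} for $z\in H^2$ with $\|z\|_{H^1_\kappa}\lesssim\rho(\kappa)\|u-\ulod\|_{L^2}$ and $\kappa^{-2}|z|_{H^2}\lesssim\rho(\kappa)\|u-\ulod\|_{L^2}$, together with the characterization \eqref{alt-char-z} of $z$, and repeat the $a$-orthogonality and $\Ph$-superapproximation bookkeeping; the three summands $(h\kappa)^4$, $\rho(\kappa)(h\kappa)^5$ and $\rho(\kappa)\kappa^{d/2}(h\kappa)^6$ emerge as the optimal projection error, the linear duality defect, and the cubic remainder.

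I expect the main obstacle to be the sharp tracking of the $\kappa$- and $\rho(\kappa)$-powers in the last two steps, namely demonstrating rigorously that the combination of $a(\Vlod,W)=0$ with the $\ker(\Ph)$-superapproximation suppresses the spurious factor $\rho(\kappa)$ that a black-box nonlinear-Galerkin estimate would produce, while keeping the zero mode $\ci u$ under control inside the duality. Calibrating the two resolution conditions so that every absorption step closes is the technical heart of the refinement over \cite{BDH25}, whereas the existence/contraction step and the continuity bounds are comparatively routine given Lemmas~\ref{lem:coercivity}--\ref{senv-lemma-gaarding}.
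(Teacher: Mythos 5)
Your first two steps are sound and essentially reproduce the paper's machinery: the superconvergent bound for $\tilde u=(\mathrm{Id}-\Cor)\Ph u=\Rlod u$ is exactly Lemma~\ref{estimates-Ritz-projec}/Corollary~\ref{cor-quasi-best-approx}, and your Newton--Kantorovich step parallels the fixed-point construction of Theorem~\ref{theorem:existence-of-local-discrete-minimizer}, with the two resolution conditions playing the same roles. The genuine gap is in your Strang-type error estimate, concretely in the claim $\langle Be,\psi\rangle\lesssim\bigl((h\kappa)^5+\text{h.o.t.}\bigr)\|\psi\|_{H^1_{\kappa}}$ for $e=u-\Rlod u\in W$ and $\psi=\ulod-\Rlod u\in\Vlod$. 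After the $a$-orthogonality removes the kinetic part, the surviving zeroth-order terms have the form $(g\,e,\psi)$ with $g$ built from $u$, e.g.\ $g=|u|^2-1$. Moving $\Ph$ onto the test side gives $(e,(\mathrm{Id}-\Ph)(g\psi))\lesssim\|e\|_{L^2}\,h\,\|\nabla(g\psi)\|_{L^2}$, but $\nabla(g\psi)$ contains $\psi\,\nabla|u|^2$, and since $\nabla u$ is only controlled in $L^4$ with $\|\nabla u\|_{L^4}\lesssim\kappa$ (Lemma~\ref{lem:existence}), you must pay $\|\psi\|_{L^4}\lesssim\kappa^{d/4}\|\psi\|_{H^1_{\kappa}}$. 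The superapproximation gain therefore costs a factor $\kappa^{d/4}$: the best available bound is $\langle Be,\psi\rangle\lesssim(h\kappa)^5\kappa^{d/4}\|\psi\|_{H^1_{\kappa}}$, hence $\|\psi\|_{H^1_{\kappa}}\lesssim\rho(\kappa)(h\kappa)^5\kappa^{d/4}$, and this is \emph{not} $\lesssim(h\kappa)^3$ under the stated hypotheses: for $d=2$, $\rho(\kappa)\sim\kappa$, $h\kappa\sim\kappa^{-2/3}$ both resolution conditions hold, yet $\rho(\kappa)(h\kappa)^2\kappa^{d/4}\sim\kappa^{1/6}\to\infty$. (Skipping the $\Ph$-trick is worse: $(g e,\psi)\lesssim\|e\|_{L^2}\|\psi\|_{L^2}$ only yields $\|\psi\|_{H^1_{\kappa}}\lesssim\rho(\kappa)(h\kappa)^4\lesssim(h\kappa)^2$.) Bootstrapping through the a priori $L^4$ bound from the contraction step does not close the deficit either.

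This is precisely why the paper does not pivot at the LOD interpolant $\Rlod u$ but at the $E^{\prime\prime}(u)$-Ritz projection $\RitzLOD(u)$ of \eqref{eq_def_Rh_rewritten}: its Ritz--Galerkin orthogonality annihilates the linear-in-$e$ contribution \emph{exactly}, so that, after inserting both first-order conditions, the entire remaining defect $\langle E^{\prime\prime}(u)(u-\ulod),v_h\rangle=( 2 u |u-\ulod|^2 + (u-\ulod)^2 \overline{u} - |u-\ulod|^2 (u-\ulod)  , v_h )$ is quadratic in $u-\ulod$ and can be absorbed using the fixed-point a priori bounds (Lemma~\ref{lemma:H1-est-Rh}); the price, $\|u-\RitzLOD(u)\|_{H^1_{\kappa}}\lesssim\|u-\Rlod(u)\|_{H^1_{\kappa}}$, is paid once through a Schatz duality argument (Lemma~\ref{lemma:estimate-ritz-projection}), which is where $\rho(\kappa)(h\kappa)^2\lesssim1$ enters. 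Your $L^2$ duality step inherits the same defect, since it builds on the $H^1_{\kappa}$ bound and the same bookkeeping. A secondary, fixable issue you partially flagged: $\psi=\ulod-\Rlod u$ is not exactly in $\orthiu$ (one only has $(\psi,\ci u)=(e,\ci u)=O((h\kappa)^5)$ by the same $\Ph$-trick), so the coercivity of the averaged Hessian cannot be applied to $\psi$ verbatim; the phase component must be split off and estimated separately, as the paper does via the modified projection $\Rlodorth$.
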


We give the proof of our second main result in Section \ref{sec:proof_main_result}.

For comparison, let us quickly compare the approximation properties to this in the standard Lagrange space $V_h$ of the same dimension as $\Vlod$. In this case, the following result can be proved, cf. \cite[Theorem 3.2]{CFH25}.

\begin{theorem}
\label{theorem:main-result-std-FEM}
Assume \ref{A0}--\ref{A2} and let $u \in H^1(\Omega)$ be quasi-isolated minimizer of $E$. If
$$
\kappa^{d/2}\, \rho(\kappa)\,(h\kappa) \,\lesssim \, 1
$$
then there exists a discrete minimizer $u_h \in V_h$ of $E$ with 
\begin{eqnarray*}
\langle E^{\prime}(u_h) , v_h \rangle &=& 0 \hspace{93pt} \mbox{ for all } v_h \in V_h
\qquad
\mbox{and} \\
\langle E^{\prime\prime}(u_h) v_h , v_h \rangle &\gtrsim& \rho(\kappa)^{-1} \| v_h \|_{H^1_{\kappa}}^2 \hspace{20pt} \mbox{ for all } v_h \in V_h\cap \orthiu
\end{eqnarray*}
such that
\begin{eqnarray*}
\| u - u_h \|_{H^1_{\kappa}} &\lesssim&  h\kappa
\end{eqnarray*}
and 
\begin{align*}
\| u - \ulod \|_{L^2} \,\,\,\, \lesssim \,\, \kappa^{d/2} \, \rho(\kappa) \, (h\kappa)^2. 
\end{align*}
\end{theorem}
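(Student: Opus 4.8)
The plan is to establish Theorem~\ref{theorem:main-result-std-FEM} by the same perturbation-and-fixed-point strategy that underlies the multiscale result of Theorem~\ref{theorem:main-result}, but with the standard Lagrange space $\Vhcol$ in place of $\Vlod$; the only structural change is that the approximation properties of the discrete space are now merely those of piecewise linears rather than the superconvergent ones supplied by the corrector $\Cor$. First I would record the relevant approximation estimates. Since any critical point satisfies $u\in H^2(\Omega)$ with $\|\nabla u\|_{L^2}\lesssim\kappa$ and $|u|_{H^2}\lesssim\kappa^2$ by Lemma~\ref{lem:existence}, the standard stability and approximation properties of $\Ph$ on the quasi-uniform mesh yield
\[
 \|u-\Ph u\|_{H^1_\kappa}\lesssim h\kappa
 \qquad\text{and}\qquad
 \|u-\Ph u\|_{L^2}\lesssim (h\kappa)^2 .
\]
These are exactly the rates claimed for $\ucolh$, so the real content of the theorem is that a discrete \emph{critical point} with a definite Hessian exists and attains this best-approximation rate, with all $\kappa$-dependencies tracked.

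To construct $\ucolh$ I would run a quantitative Newton/fixed-point iteration on the gauge-transversal slice. Since $\Vhcol\cap\orthiu\subset\orthiu$, the coercivity of Lemma~\ref{lem:coercivity} restricts verbatim to give $\langle E''(u)v_h,v_h\rangle\ge\Csolinv\|v_h\|_{H^1_\kappa}^2$ there, so the linearized operator is boundedly invertible with solution-operator norm $\lesssim\Csol$. Starting from $\Ph u$, I would define the Newton map whose fixed point $\ucolh$ solves $\langle E'(\ucolh),v_h\rangle=0$ on the slice orthogonal to $\ci\ucolh$. Because $\Vhcol$ is complex-linear we have $\ci\ucolh\in\Vhcol$, and the real-orthogonal splitting $\Vhcol=\{v_h:(\ci\ucolh,v_h)=0\}\oplus\mathrm{span}\{\ci\ucolh\}$, combined with the gauge identity $\langle E'(\ucolh),\ci\ucolh\rangle=0$, upgrades the slice equation to the full first-order condition on all of $\Vhcol$; the self-reference of the slice is resolved within the fixed point by a phase normalization relative to $\Ph u$. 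Contraction then rests on (i) a consistency bound for the residual $\langle E'(\Ph u),v_h\rangle=\langle E'(\Ph u)-E'(u),v_h\rangle$ obtained from a Taylor expansion and the continuity part of Lemma~\ref{lem:coercivity}, and (ii) a Lipschitz bound for the cubic nonlinearity, controlled through the pointwise bound $|u|\le1$ (Lemma~\ref{lem:existence}) together with Gagliardo–Nirenberg and inverse estimates; the resolution condition $\kappa^{d/2}\Csol(h\kappa)\lesssim1$ is precisely what keeps the contraction factor below one.

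For the error analysis I would split $u-\ucolh=(u-\Ph u)+(\Ph u-\ucolh)$ and bound the discrete part through the contraction estimate. The naive quasi-optimality constant is $\Csol$, but the key point is that this $\Csol$-factor attaches only to the \emph{higher-order} nonlinear remainder, which after converting $L^2$-closeness into an operator perturbation via Gagliardo–Nirenberg scales like $\kappa^{d/2}(h\kappa)^2$; under $\kappa^{d/2}\Csol(h\kappa)\lesssim1$ this remainder is absorbed into $h\kappa$, giving the clean bound $\|u-\ucolh\|_{H^1_\kappa}\lesssim h\kappa$ with no $\Csol$ factor. The $L^2$ estimate then follows by Aubin–Nitsche duality: I would solve $\langle E''(u)z,v\rangle=(u-\ucolh,v)$ on $\orthiu$, invoke the $H^2$-regularity of Lemma~\ref{senv-theorem-reg-secE-pde} so that $\tfrac{1}{\kappa^2}|z|_{H^2}\lesssim\Csol\|u-\ucolh\|_{L^2}$, and test with $z-\Ph z$. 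The linear part contributes $\Csol(h\kappa)\,\|u-\ucolh\|_{H^1_\kappa}$, while the quadratic Taylor remainder contributes the dimension-dependent factor $\kappa^{d/2}$, yielding $\|u-\ucolh\|_{L^2}\lesssim\kappa^{d/2}\Csol(h\kappa)^2$. Finally, the Hessian coercivity $\langle E''(\ucolh)v_h,v_h\rangle\gtrsim\Csolinv\|v_h\|_{H^1_\kappa}^2$ on $\Vhcol\cap\orthiu$ is obtained by perturbing $E''(u)$: since $\langle(E''(\ucolh)-E''(u))v_h,v_h\rangle$ is controlled by $\|u-\ucolh\|_{L^2}\,\|v_h\|_{L^4}^2$, the same Gagliardo–Nirenberg/inverse estimates and the resolution condition keep this perturbation below half the continuous coercivity.

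I expect the main obstacle to be the sharp tracking of the $\kappa$-powers in the nonlinear terms. Every place where the cubic Ginzburg–Landau nonlinearity is estimated — the residual, the Newton–Lipschitz bound, the coercivity perturbation, and the quadratic remainder in the duality step — must be treated by Hölder together with Gagliardo–Nirenberg or inverse inequalities so that exactly the factor $\kappa^{d/2}$, and no more, appears, matching the resolution condition $\kappa^{d/2}\Csol(h\kappa)\lesssim1$; obtaining the $H^1_\kappa$ rate \emph{without} a $\Csol$ factor hinges on confining $\Csol$ to the higher-order remainder and then absorbing it via this condition. A secondary but genuine difficulty is the gauge freedom: the phase must be fixed so that the slice equation is equivalent to the full discrete first-order condition on $\Vhcol$, and so that $u$ and $\ucolh$ are compared along the same orbit representative throughout the fixed-point and duality arguments.
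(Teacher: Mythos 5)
Your proposal is correct in outline and follows essentially the same route as the paper: the paper itself gives no proof of this statement but defers to \cite[Theorem 3.2]{CFH25}, whose argument --- reproduced in detail in this paper for the LOD analogue (Theorem~\ref{theorem:main-result} via Lemmas~\ref{lemma:estimate-ritz-projection}--\ref{lemma:continuity-secEh} and Theorem~\ref{theorem:existence-of-local-discrete-minimizer}) --- is precisely your scheme of linearizing at $u$ on the phase-orthogonal slice, running a quantitative Banach/Newton fixed point whose contraction and invariance are driven by the condition $\kappa^{d/2}\rho(\kappa)(h\kappa)\lesssim 1$, confining $\rho(\kappa)$ to higher-order terms via a Schatz-type duality so that $\| u-u_h\|_{H^1_\kappa}\lesssim h\kappa$ is $\rho$-free, obtaining the $L^2$ rate by Aubin--Nitsche with the $H^2$-regularity of Lemma~\ref{senv-theorem-reg-secE-pde}, and proving discrete coercivity of $E''(u_h)$ by a Gagliardo--Nirenberg perturbation of $E''(u)$. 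The only organizational deviations are cosmetic: you anchor a Newton map at $\Ph u$ with a moving slice orthogonal to $\ci u_h$ (resolved by a phase normalization), whereas the paper works throughout on the fixed slice $\orthiu$, encodes consistency exactly as the Ritz-projection error through the identity $E^{\prime\prime}(u)\vert_{\orthiu}^{-1}E_h^{\prime}(u)=u-\RitzLOD(u)$, tracks an additional $L^4$-ball of radius $\tau\rho(\kappa)^{-1}\kappa^{-d/4}$ in the fixed-point set, and removes the phase constraint at the end by rotating $e^{\ci\omega}v_h$ into the slice.
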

Comparing Theorem \ref{theorem:main-result} with Theorem \ref{theorem:main-result-std-FEM} reveals that the approximation properties in $V_h$ are substantially weaker: the convergence rates are lower, and a much stronger resolution condition is required to capture the physically correct behavior of the minimizers.

\subsection{Proof of the main results}

In this section we prove our main results, Theorem \ref{thm:energy_error_estimate} and Theorem \ref{theorem:main-result}. We begin by analyzing approximations in the LOD space for arbitrary functions that satisfy suitable regularity assumptions under the weak resolution condition $h \lesssim \kappa^{-1}$. We then establish the energy error estimates stated in Theorem \ref{thm:energy_error_estimate}. A key ingredient in the proof of the $H^1_\kappa$- and $L^2$-estimates is the analysis of the bilinear form and the associated Ritz projection induced by the second Fr\'echet derivative $E''(u)$. We derive error bounds showing that this Ritz projection behaves as a best approximation in the LOD space when stronger resolution conditions are imposed. Finally, by applying a fixed-point argument developed in \cite{CFH25}, we ensure the existence of discrete minimizers located near an exact minimizer and conclude the error estimates stated in Theorem \ref{theorem:main-result}.

\subsubsection{Approximation in the LOD space}

We introduce the LOD decomposition operator 
\begin{align} \label{def-LOD-projection}
		\Rlod : H^1(\Omega) \rightarrow \Vlod, \quad \Rlod(v) := \Ph(v) - (\Cor \circ \Ph)(v).
\end{align}
where we recall $\Ph : H^1(\Omega) \rightarrow V_h$ as the $L^2$-projection on $V_h$ and  $\Cor : H^1(\Omega) \rightarrow W$ as the corrector defined in \eqref{corrctor-op-def}. This operator is a projection onto $\Vlod$ as it readily follows for arbitrary $v_h -  \Cor (v_h) \in \Vlod$ (with $v_h\in V_h$) that
\begin{align*}
\Rlod(v_h -  \Cor (v_h)) = \Ph(v_h -  \Cor (v_h)) - (\Cor \circ \Ph)(v_h -  \Cor (v_h)) =
v_h - (\Cor \circ \Ph)(v_h) = v_h -  \Cor (v_h).
\end{align*}
Also note that $\Rlod$ is $H^1_{\kappa}$-stable, i.e.,
\begin{align*}
\| \Rlod (v) \|_{H^1_{\kappa}} \lesssim \| v \|_{H^1_{\kappa}}  \qquad \mbox{for all } v\in H^1(\Omega).
\end{align*}
The stability estimate is obtained from the $H^1$-stability of $\Ph$ (on quasi-uniform meshes, cf. \cite{BaY14}) and the $H^1_{\kappa}$-stability of $\Cor$. By standard arguments in the context of LOD spaces we have the following estimates for the error of the LOD decomposition operator.

\begin{lemma}
\label{estimates-Ritz-projec}
Let \ref{A0}--\ref{A2} be fulfilled and $h \lesssim \kappa^{-1}$. Then for every $v \in H^1(\Omega)$ and $f \in L^2(\Omega)$ such that
\begin{align*}
  a(v,w) = (f,w) \quad \text{for all } w \in H^1(\Omega)
\end{align*}
it holds
\begin{align*}
  \| v - \Rlod v \|_{L^2} \,\lesssim\, h \kappa \, \| v - \Rlod v \|_{H^1_\kappa}
  \qquad \text{and} \qquad
  \| v - \Rlod v \|_{H^1_\kappa} \,\lesssim\, h \kappa \, \| f - \Ph f \|_{L^2}.
\end{align*}
\end{lemma}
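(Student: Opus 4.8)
The plan is to exploit the single structural observation that the decomposition error $e := v - \Rlod v$ lies in the detail space $W = \ker(\Ph)$. Writing $\Rlod v = \Ph v - \Cor \Ph v$ gives $e = (v - \Ph v) + \Cor \Ph v$, and both summands belong to $W$: the first because $\Ph$ is a projection, so $\Ph(v-\Ph v)=0$; the second by the very definition of the corrector $\Cor$ as a map into $W$. Given this, the first (i.e.\ $L^2$) estimate follows at once from the approximation property of the $L^2$-projection: since $\Ph e = 0$, we have $\|e\|_{L^2} = \|e - \Ph e\|_{L^2} \lesssim h \,\|\nabla e\|_{L^2}$, and bounding $\|\nabla e\|_{L^2} \le \kappa \|e\|_{H^1_\kappa}$ (directly from the definition of the $\kappa$-weighted norm) yields $\|v - \Rlod v\|_{L^2} \lesssim h\kappa\,\|v - \Rlod v\|_{H^1_\kappa}$. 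This step requires only $e \in \ker(\Ph)$ and not the PDE.

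For the second estimate I would start from the coercivity of $a(\cdot,\cdot)$ on $W$ provided by Lemma~\ref{lem:abeta_coercivity} (applicable since $h\kappa \lesssim 1$), which gives $\tfrac14 \|e\|_{H^1_\kappa}^2 \le a(e,e)$. The crux is to rewrite $a(e,e)$ using the two orthogonalities available. First, testing the corrector equation $a(\Cor \Ph v, w) = a(\Ph v, w)$ with $w = e \in W$ shows $a(\Rlod v, e) = a(\Ph v, e) - a(\Cor \Ph v, e) = 0$, so that $a(e,e) = a(v,e)$. Second, the hypothesis $a(v,w)=(f,w)$ applied with $w=e$ gives $a(e,e) = (f,e)$. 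Finally, self-adjointness of the $L^2$-projection together with $\Ph e = 0$ yields $(\Ph f, e) = (f, \Ph e) = 0$, whence $a(e,e) = (f - \Ph f, e)$.

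It then remains to close the estimate by combining the above with Cauchy--Schwarz and the already-established first estimate: $a(e,e) = (f - \Ph f, e) \le \|f - \Ph f\|_{L^2}\,\|e\|_{L^2} \lesssim h\kappa\,\|f-\Ph f\|_{L^2}\,\|e\|_{H^1_\kappa}$. Inserting $\tfrac14\|e\|_{H^1_\kappa}^2 \le a(e,e)$ and dividing by $\|e\|_{H^1_\kappa}$ produces $\|v - \Rlod v\|_{H^1_\kappa} \lesssim h\kappa\,\|f - \Ph f\|_{L^2}$, as claimed. The main point to get right --- the obstacle, such as it is --- is the chain of identities for $a(e,e)$: one must use the corrector's defining property to annihilate the $\Rlod v$ contribution, the PDE to introduce $f$, and the self-adjointness of $\Ph$ (with $e\in\ker\Ph$) to replace $f$ by $f - \Ph f$. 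This last replacement is what upgrades a naive $\|f\|_{L^2}$ bound to the sharper $\|f - \Ph f\|_{L^2}$, and is the mechanism behind the extra order of approximation; everything else is routine.
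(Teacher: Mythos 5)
Your proposal is correct and follows essentially the same route as the paper's proof: establish $e := v - \Rlod v \in W = \ker(\Ph)$, use the coercivity of $a(\cdot,\cdot)$ on $W$ from Lemma~\ref{lem:abeta_coercivity}, pass through $a(e,e) = a(v,e) = (f,e) = (f-\Ph f, e)$ via the corrector orthogonality, the PDE, and the $L^2$-projection orthogonality, and close with the approximation property $\|e\|_{L^2} = \|e - \Ph e\|_{L^2} \lesssim h\kappa\|e\|_{H^1_\kappa}$. The only cosmetic difference is that you invoke self-adjointness of $\Ph$ where the paper writes $(f,e_h) = (f-\Ph f, e_h - \Ph e_h)$ directly; these are the same orthogonality argument.
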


\begin{proof}
For arbitrary $v\in H^1(\Omega)$ we have by construction of $\Rlod$ that $\Ph(v-\Rlod(v))=\Ph(v-\Ph(v))=0$, hence $e_h:=v-\Rlod(v) \in W$. With Lemma \ref{lem:abeta_coercivity} we have
\begin{align*}
	\| e_h \|_{H^1_\kappa}^2 & \lesssim a(e_h,e_h) = a(v,e_h) = (f,e_h) = (f - \Ph f, e_h - \Ph e_h) \lesssim h \kappa  \| f - \Ph f \|_{L^2} \| e_h \|_{H^1_k}.
\end{align*}
This shows the $H^1_\kappa$-estimate. The $L^2$-estimate also exploits $e_h \in W$, which yields $\| e_h \|_{L^2}= \| e_h - \Ph(e_h)\|_{L^2} \lesssim h \kappa \| e_h \|_{H^1_{\kappa}}$.
\end{proof}

Now let $u \in H^2(\Omega)$ be a critical point. Then the first order condition implies that $u$ solves
\begin{align*}
a(u,w) = (f_u, w) \quad \text{for all } w \in H^1(\Omega).
\end{align*}
where $f_u = (1 - |u|^2)u$. Using Lemma \ref{lem:existence} one shows (cf. \cite{DH24}) that $f_u \in H^2(\Omega)$ with $| f_u |_{H^s} \lesssim \kappa^s$ for $s = 0,1,2$ and thus we directly conclude from Lemma \ref{estimates-Ritz-projec} the following estimates for the projection $\Rlod(u)$ and the best-approximation of $u$ in $\Vlod$.

\begin{corollary}
\label{cor-quasi-best-approx} 
Let \ref{A0}-\ref{A2} be fulfilled and $h \lesssim \kappa^{-1}$. Then for every critical point $\ucol$ of $E$ it holds
\begin{align*}
	\| u - \Rlod u \|_{H^1_\kappa} + (h \kappa)^{-1} \| u - \Rlod u \|_{L^2} \lesssim \kappa^3 h^3
\end{align*}
and
\begin{align*}
	\inf_{\vlod \in \Vlod} \| u - \vlod \|_{H^1_\kappa} \lesssim \kappa^3 h^3.
\end{align*}
\end{corollary}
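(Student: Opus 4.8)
The plan is to observe that a critical point $u$ solves the \emph{linear} problem $a(u,w) = (f_u,w)$ for all $w \in H^1(\Omega)$ with $f_u = (1-|u|^2)u$, which is exactly the structural hypothesis of Lemma~\ref{estimates-Ritz-projec}. I would therefore specialise that lemma to $v=u$ and $f=f_u$ and then quantify the two factors that appear on its right-hand side. From the lemma, $\| u - \Rlod u \|_{H^1_\kappa} \lesssim h\kappa\, \| f_u - \Ph f_u \|_{L^2}$, so the whole estimate reduces to controlling the $L^2$-projection error of the data $f_u$.

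For this second step I would invoke the regularity already recorded above, namely $f_u \in H^2(\Omega)$ with $|f_u|_{H^s} \lesssim \kappa^s$ for $s=0,1,2$. Since $\Ph$ is the $L^2$-projection onto the $\mathcal{P}_1$ space $V_h$, the standard approximation property of the $L^2$-projection on quasi-uniform meshes gives $\| f_u - \Ph f_u \|_{L^2} \lesssim h^2 |f_u|_{H^2} \lesssim \kappa^2 h^2$. Chaining this into the previous bound yields $\| u - \Rlod u \|_{H^1_\kappa} \lesssim h\kappa \cdot \kappa^2 h^2 = \kappa^3 h^3$. The $L^2$-part of the claim then follows from the first inequality of Lemma~\ref{estimates-Ritz-projec}, which gives $\| u - \Rlod u \|_{L^2} \lesssim h\kappa\, \| u - \Rlod u \|_{H^1_\kappa} \lesssim \kappa^4 h^4$, i.e.\ $(h\kappa)^{-1}\| u - \Rlod u \|_{L^2} \lesssim \kappa^3 h^3$; adding the two contributions gives the first displayed estimate. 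Finally, since $\Rlod u \in \Vlod$ by construction, the best-approximation bound is immediate from $\inf_{\vlod \in \Vlod}\| u - \vlod \|_{H^1_\kappa} \le \| u - \Rlod u \|_{H^1_\kappa} \lesssim \kappa^3 h^3$.

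This corollary is essentially packaged to be a direct consequence of Lemma~\ref{estimates-Ritz-projec}, so I do not anticipate a genuine obstacle. The only points requiring care are the bookkeeping of the $\kappa$-powers (ensuring that the $H^2$-seminorm of $f_u$ contributes exactly $\kappa^2$) and the justification of the $L^2$-projection estimate $\| f_u - \Ph f_u \|_{L^2} \lesssim h^2|f_u|_{H^2}$, which relies on the quasi-uniformity of $\mathcal{T}_h$ together with the $H^2$-regularity of $f_u$. The resolution assumption $h\lesssim\kappa^{-1}$ is inherited directly from Lemma~\ref{estimates-Ritz-projec} and is precisely what keeps all the hidden constants independent of $\kappa$ and $h$.
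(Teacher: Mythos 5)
Your proposal is correct and takes essentially the same route as the paper: the paper also identifies the critical point as the solution of the linear problem $a(u,w)=(f_u,w)$ with $f_u=(1-|u|^2)u$, invokes the data regularity $|f_u|_{H^s}\lesssim \kappa^s$ for $s=0,1,2$, and then concludes directly from Lemma~\ref{estimates-Ritz-projec}. The only step the paper leaves implicit---the standard $L^2$-projection bound $\|f_u-\Ph f_u\|_{L^2}\lesssim h^2 |f_u|_{H^2}$ on quasi-uniform meshes---is exactly the detail you supply, and your bookkeeping of the $\kappa$-powers matches the stated rates.
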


\subsubsection{Proof of energy error estimates} \label{sec:proof_energy_estimates}

We now prove the estimate for the error in energy from Theorem \ref{thm:energy_error_estimate}.

\begin{proof}[Proof of Theorem \ref{thm:energy_error_estimate}]
Let us denote by $u \in H^1(\Omega)$ a global minimizer and by $\ulod \in \Vlod$ a discrete global minimizer. For any arbitrary $\vlod \in \Vlod$ we have due to the minimizing property $E(\ulod) - E(u) \le E(\vlod) - E(u)$ and it remains to estimate the right hand side. First we reformulate
\begin{align*}
	& E(\vlod) - E(u)  \\
	& = \tfrac{1}{2} a( \vlod , \vlod) - \tfrac{1}{2} a(\ucol , \ucol ) 
+ \tfrac{1}{4}  \int_\Omega  \left| 1- |\vlod|^2 \right|^2 - \left| 1- |\ucol|^2 \right|^2 \dx \\
& =  \tfrac{1}{2}  a(\ucol - \vlod ,\ucol - \vlod) 
-a(\ucol ,\ucol - \vlod)
+ \tfrac{1}{4}  \int_\Omega  \left| 1- |\vlod|^2 \right|^2 - \left| 1- |\ucol|^2 \right|^2  \dx \\
& = \tfrac{1}{2}  a(\ucol - \vlod,\ucol - \vlod) + \tfrac{1}{4}  \Re  \int_\Omega  \left| 1 - |\vlod|^2 \right|^2 - \left| 1 -|\ucol|^2 \right|^2 + 4 (|\ucol|^2 - 1 ) \ucol \overline{(\ucol - \vlod)} \dx
\end{align*}
where we used the Ginzburg-Landau equation $\langle E'(u), v \rangle = 0$ in the last step. Next a simple calculation gives the identity
\begin{eqnarray*}
\lefteqn{ \bigl( 1- |{v}|^2 \bigr)^2 -   \bigl( 1- |\ucol |^2 \bigr)^2 \,\,+\,\,
4(|\ucol|^2-1) \, \Re (\ucol \overline{(\ucol - {v} )}) 
 } \\
&\qquad \qquad =& 2(|\ucol|^2-1)|\ucol - {v}|^2 + (|\ucol - {v}|^2- 2 \Re( \ucol \overline{( \ucol - {v} )}) )^2.
\end{eqnarray*}
This yields with $|u| \le 1$ from Lemma \ref{lem:existence} and the continuity of $a(\cdot,\cdot)$ 
\begin{align*} 
& \Ecol(\vlod) - \Ecol(\ucol) \\
& = \tfrac{1}{2}  a(\ucol - \vlod,\ucol - \vlod) + \tfrac{1}{4}  \int_\Omega 2(|\ucol|^2-1)|\ucol - \vlod|^2 + (|\ucol - \vlod|^2- 2 \Re( \ucol \overline{( \ucol - \vlod )}) )^2 \dx\\
& \lesssim \| u - \vlod \|_{H^1_\kappa}^2 + \| u - \vlod \|_{L^2}^2 + \int_{\Omega} | \ucol - \vlod|^2 (| \ucol - \vlod| + |\ucol| )^2 \dx \\
& \lesssim \| u - \vlod \|_{H^1_\kappa}^2 + \| u - \vlod \|_{L^2}^2 + \| u - \vlod \|_{L^4}^4.
\end{align*}
Since $\vlod \in \Vlod$ is arbitrary we have
\begin{align} \label{eq:energy_quasibest}
	E(\vlod) - E(u) \lesssim \inf_{\vlod \in \Vlod} \left\{ \| u - \vlod \|_{H^1_\kappa}^2 + \| u - \vlod \|_{L^2}^2 + \| u - \vlod \|_{L^4}^4 \right\}.
\end{align}
The best-approximation errors in $H^1_\kappa$ and $L^2$ can be bounded by $\mathcal{O}(\kappa^3 h^3)$ with Corollary \ref{cor-quasi-best-approx}. For the $L^4$ error we first recall (cf. \cite{BrennerScott}) that for $\ell = 1,2$
\begin{align*}
	\| v - \Ph v \|_{L^4} \lesssim \kappa^\ell h^\ell \| v \|_{H^\ell_\kappa} \quad \text{for all } v \in L^4(\Omega)\cap H^\ell(\Omega). 
\end{align*}
Furthermore, $\Ph \Cor u = 0$ such that
\begin{align*}
	\| u - (\mathrm{Id} - \Cor) \Ph u \|_{L^4} & \lesssim \| u - \Ph u \|_{L^4} + \| (\mathrm{Id} - \Ph) \Cor \Ph u \|_{L^4} \\
	& \lesssim \kappa^2 h^2 \| u \|_{H^2_\kappa} + \kappa h \| \Cor \Ph u \|_{H^1_\kappa} \\
	& \lesssim \kappa^2 h^2 \| u \|_{H^2_\kappa} + \kappa h \| u - (\mathrm{Id} - \Cor) \Ph u \|_{H^1_\kappa} + \kappa h \| (\mathrm{Id} - \Ph) u \|_{H^1_\kappa} \\
	& \lesssim \kappa^2 h^2 + \kappa^4 h^4 \lesssim \kappa^2 h^2
\end{align*}
where we exploited the $H^1_\kappa$-stability of $(\mathrm{Id} - \Cor) \Ph$ through the $H^1_\kappa$-stability of $\Ph$ on quasi-uniform meshes and used Corollary \ref{cor-quasi-best-approx} in the last step. Now the claim follows from \eqref{eq:energy_quasibest} and Corollary \ref{cor-quasi-best-approx}.
\end{proof}

\subsubsection{The operator $E''(u)$ and its Ritz-projection}
\label{sec:E''(u)}

We analyze problems involving the elliptic differential operator $E^{\prime\prime}(u)\vert_{\orthiu}$. 
Recall from Lemmas~\ref{lem:coercivity} and~\ref{senv-theorem-reg-secE-pde} that $E^{\prime\prime}(u)\vert_{\orthiu}$ admits a bounded inverse on $\orthiu$, and that the corresponding solutions $z \in \orthiu$ to  
\begin{align*}
E^{\prime\prime}(u)\vert_{\orthiu} z = (f, \cdot)\vert_{\orthiu}
\end{align*}
possess $H^2$-regularity.  
Based on these results, the following corollary establishes how accurately such solutions $z$ can be approximated by a Galerkin method in the LOD space $\Vlod$.
\begin{corollary}\label{lemma:eta-hp-estimate}
Assume \ref{A0}-\ref{A2} and let {$\ucol \in H^1(\Omega)$} be a quasi-isolated minimizer of $\Ecol$.
For $f \in H^1(\Omega)$ let $z_f  \in \orthiu $ denote the corresponding unique solution to
\begin{align*}
\langle E^{\prime\prime}(\hspace{1pt}\ucol\hspace{1pt})  z_f   , { v} \rangle
\,\,=\,\, ( f, { v}) 
\qquad
\mbox{for all } v \in \orthiu.
\end{align*}
If $h \lesssim \kappa^{-1}$, then there exists a projection $\Rlodorth : \orthiu \to \Vlod \cap \orthiu$ with
\begin{eqnarray*}
\| v - \Rlodorth (v) \|_{H^1_{\kappa}}  
&\lesssim& \| v - \Rlod (v) \|_{H^1_{\kappa}}
\qquad
\mbox{and}
\qquad
\| v - \Rlodorth (v) \|_{L^2}  
\,\,\, \lesssim \,\,\,
 \| v - \Rlod (v) \|_{L^2}
\end{eqnarray*}
for all $v\in \orthiu$ and such that
\begin{align}
\label{def-eta-h-p}
\| z_f  -  \Rlodorth(z_f) \|_{L^2} + 
h \kappa \, \| z_f  -  \Rlodorth(z_f) \|_{H^1_{\kappa}}  \,\,\, \lesssim  \,\,\, \,(h \kappa)^3 \left(  \| f \|_{H^1_{\kappa}}  + \rho(\kappa) \| f\|_{L^2} \right) .
\end{align}
\end{corollary}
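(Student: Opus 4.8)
The plan is to obtain $\Rlodorth$ as a rank-one correction of the LOD decomposition operator $\Rlod$ that re-projects its image onto $\orthiu$. Concretely, I set
\[
\Rlodorth(v) := \Rlod(v) - \frac{(\Rlod(v),\ci u)}{(\Rlod(\ci u),\ci u)}\,\Rlod(\ci u),
\]
which lies in $\Vlod$ and satisfies $(\Rlodorth(v),\ci u)=0$, hence maps $\orthiu$ into $\Vlod\cap\orthiu$. Since $\Rlod$ is the identity on $\Vlod$, one checks that $\Rlodorth$ fixes $\Vlod\cap\orthiu$, so it is a projection. The denominator is harmless for $h\kappa$ small: using $v-\Rlod(v)\in W=\ker\Ph$ together with Corollary~\ref{cor-quasi-best-approx} applied to the critical point $\ci u$ gives $(\Rlod(\ci u),\ci u)=\|u\|_{L^2}^2-(\ci u-\Rlod(\ci u),\ci u)$ with $|(\ci u-\Rlod(\ci u),\ci u)|\lesssim(h\kappa)^3$, so the denominator stays comparable to $\|u\|_{L^2}^2\gtrsim1$ and bounded away from zero.

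For the two comparison bounds I would use that $v\in\orthiu$ forces $(\Rlod(v),\ci u)=-(v-\Rlod(v),\ci u)$, whence $|(\Rlod(v),\ci u)|\lesssim\|v-\Rlod(v)\|_{L^2}$. Together with $\|\Rlod(\ci u)\|_{H^1_\kappa}\lesssim\|u\|_{H^1_\kappa}\lesssim1$ and $\|\Rlod(\ci u)\|_{L^2}\lesssim1$, the correction term is controlled by $\|v-\Rlod(v)\|_{L^2}$ in both norms. The $L^2$-comparison is then immediate, while for the $H^1_\kappa$-comparison one absorbs the correction using $\|v-\Rlod(v)\|_{L^2}\lesssim h\kappa\,\|v-\Rlod(v)\|_{H^1_\kappa}$, valid because $v-\Rlod(v)\in W$.

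For the superconvergence estimate I would invoke the characterization \eqref{alt-char-z}, by which $z_f$ solves $a(z_f,v)=(g,v)$ for all $v\in H^1(\Omega)$ with the explicit right-hand side $g$. Lemma~\ref{estimates-Ritz-projec} then applies to $z_f$, yielding $\|z_f-\Rlod z_f\|_{H^1_\kappa}\lesssim h\kappa\,\|g-\Ph g\|_{L^2}$ and an extra factor $h\kappa$ in the $L^2$-norm. Feeding these into the comparison bounds from the previous step transfers them to $\Rlodorth$, giving $\|z_f-\Rlodorth z_f\|_{L^2}+h\kappa\,\|z_f-\Rlodorth z_f\|_{H^1_\kappa}\lesssim(h\kappa)^2\|g-\Ph g\|_{L^2}$. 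Thus the claim reduces to showing $\|g-\Ph g\|_{L^2}\lesssim h\kappa\,(\|f\|_{H^1_\kappa}+\rho(\kappa)\|f\|_{L^2})$.

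This last estimate is the crux. Via the first-order bound $\|g-\Ph g\|_{L^2}\lesssim h\kappa\,\|g\|_{H^1_\kappa}$ the task becomes $\|g\|_{H^1_\kappa}\lesssim\|f\|_{H^1_\kappa}+\rho(\kappa)\|f\|_{L^2}$. The $f$-term and the $\|u\|_{L^2}^{-2}(f,\ci u)\,\ci u$-term are immediate from $\|u\|_{H^1_\kappa}\lesssim1$, and the $L^2$-parts of the coupling terms $(1-|u|^2)z_f$ and $2\Re(u\overline{z_f})u$ are bounded through $|u|\le1$ by $\|z_f\|_{L^2}\lesssim\rho(\kappa)\|f\|_{L^2}$ from Lemma~\ref{senv-theorem-reg-secE-pde}. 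The delicate contributions are the gradient parts, above all $\tfrac1\kappa\|\nabla(|u|^2)\,z_f\|_{L^2}$ and $\tfrac1\kappa\|\Re(u\overline{z_f})\nabla u\|_{L^2}$: after a Hölder split these reduce to $\tfrac1\kappa\|\nabla u\|_{L^4}\|z_f\|_{L^4}$, where $\|\nabla u\|_{L^4}\lesssim\kappa$ from Lemma~\ref{lem:existence} must be balanced against $\|z_f\|_{L^4}$, estimated by Gagliardo–Nirenberg interpolation of the $L^2$-, $H^1$- and $H^2_\kappa$-bounds for $z_f$ from Lemma~\ref{senv-theorem-reg-secE-pde}. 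Keeping these $\kappa$-powers sharp — so that the mixed terms are genuinely absorbed into $\|f\|_{H^1_\kappa}+\rho(\kappa)\|f\|_{L^2}$ and do not leave behind spurious positive powers of $\kappa$, which is where the extra regularity $f\in H^1$ must be exploited — is the main obstacle and the step I expect to demand the most care.
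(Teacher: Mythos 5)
Your proposal reproduces the paper's proof step for step in its structure: the same rank-one corrected projection $\Rlodorth$, the same control of the denominator $(\Rlod(\ci u),\ci u)$, the same comparison bounds via $(\Rlod(v),\ci u)=-(v-\Rlod(v),\ci u)$, and the same reduction through \eqref{alt-char-z} and Lemma~\ref{estimates-Ritz-projec} to the single remaining estimate $\|g-\Ph g\|_{L^2}\lesssim h\kappa\,\bigl(\|f\|_{H^1_\kappa}+\rho(\kappa)\|f\|_{L^2}\bigr)$. The problem is that you stop exactly where the proof must be closed: the bound $\|g\|_{H^1_\kappa}\lesssim\|f\|_{H^1_\kappa}+\rho(\kappa)\|f\|_{L^2}$ is what gives the corollary its stated form, and you declare it ``the main obstacle'' instead of proving it. Worse, the route you sketch for it cannot deliver the stated bound. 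Splitting the mixed terms by H\"older as $\tfrac{1}{\kappa}\|\nabla u\|_{L^4}\|z_f\|_{L^4}$ with $\|\nabla u\|_{L^4}\lesssim\kappa$ leaves you needing $\|z_f\|_{L^4}\lesssim\rho(\kappa)\|f\|_{L^2}$, but every Gagliardo--Nirenberg interpolation of the bounds in Lemma~\ref{senv-theorem-reg-secE-pde} yields only $\|z_f\|_{L^4}\lesssim\kappa^{d/4}\rho(\kappa)\|f\|_{L^2}$: interpolating $L^2$ with $H^s$ costs $\kappa^{s\theta}$ with $s\theta=d/4$ by scaling, for every choice of $s$, and this factor is sharp, being saturated when $f$ (hence $z_f$) concentrates on the scale $\kappa^{-1}$. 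In particular, the extra regularity $f\in H^1$ does not rescue these terms --- it enters only through the harmless contribution $\tfrac{1}{\kappa}\|\nabla f\|_{L^2}$. Your argument therefore proves at best $(h\kappa)^3\bigl(\|f\|_{H^1_\kappa}+\kappa^{d/4}\rho(\kappa)\|f\|_{L^2}\bigr)$, strictly weaker than \eqref{def-eta-h-p}, and this spurious $\kappa^{d/4}$ would propagate into Lemma~\ref{lemma:estimate-ritz-projection} and into the resolution conditions of Theorem~\ref{theorem:main-result}, i.e.\ into exactly the $\kappa$-dependence the corollary is designed to avoid. Closing the gap requires placing the extra integrability on $\nabla u$ rather than on $z_f$: for instance, a pointwise bound $\|\nabla u\|_{L^\infty}\lesssim\kappa$ (valid for critical points on convex domains by rescaled elliptic regularity, but not contained in Lemma~\ref{lem:existence}) gives immediately $\tfrac{1}{\kappa}\|\,|\nabla u|\,|z_f|\,\|_{L^2}\lesssim\|z_f\|_{L^2}\lesssim\rho(\kappa)\|f\|_{L^2}$. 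The paper itself closes this step by directly asserting the product estimate from $|u|\le 1$, $\|u\|_{H^1_\kappa}\lesssim 1$ and the stability of $z_f$.

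A second, smaller defect: you justify the denominator and the comparison bounds through the normalization $\|u\|_{L^2}\gtrsim 1$, which is nowhere available in the paper (constants must be independent of $\kappa$, hence of the particular minimizer, and local minimizers may have small mass). The paper's bookkeeping is arranged precisely so that no such lower bound is needed: the numerator is bounded by $\|v-\Rlod(v)\|_{L^2}\|u\|_{L^2}$, the correction vector by $\|\Rlod(\ci u)\|_{H^1_\kappa}\lesssim\|u\|_{H^1_\kappa}\lesssim\|u\|_{L^2}$ (Lemma~\ref{lem:existence}), and the denominator is $\gtrsim(1-h\kappa)\|u\|_{L^2}^2$, so the quotient is homogeneous of degree zero in $\|u\|_{L^2}$ and all factors cancel. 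Your estimates should be rewritten in this homogeneous form.
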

\begin{proof}
The desired result would follow directly if we could apply the error estimate for the projection $\Rlod$ in Lemma~\ref{estimates-Ritz-projec}. However, since $\Rlod$ maps into $\Vlod$, while we require an approximation result within $\Vlod \cap \orthiu$, an adjustment is necessary.  
This issue can be resolved by introducing a modified projection $\Rlodorth$ that enforces the phase condition.  
In the first step, we therefore follow the approach used in the proof of~\cite[Lemma~4.6]{BDH25} and define $\Rlodorth : \orthiu \to \Vlod \cap \orthiu$ by
\begin{align*}
\Rlodorth(v)
:= \Rlod(v)
- \frac{(\Rlod(v), \ci u)}{(\Rlod(\ci u), \ci u)} \, \Rlod(\ci u).
\end{align*}
By construction, $\Rlodorth$ is a projection, and we have $\Rlodorth(v) \in \Vlod \cap \orthiu$ for all $v \in \orthiu$. Since $\| \Rlod(v) \|_{H^1_{\kappa}} \lesssim \| v \|_{H^1_{\kappa}}$ for $h \lesssim \kappa^{-1}$, we obtain for any $v \in \orthiu$
\begin{eqnarray*}
\lefteqn{ \| v - \Rlodorth (v) \|_{H^1_{\kappa}}  
\,\,\, \le \,\,\,
\| v - \Rlod (v) \|_{H^1_{\kappa}}  
+ \left| \frac{(\Rlod(v) - v , \ci u ) }{ (\Rlod(\ci u) - \ci u , \ci u ) + ( \ci u , \ci u )} \right| \| \Rlod(\ci u)  \|_{H^1_{\kappa}} 
}\\
&\lesssim&
\| v - \Rlod (v) \|_{H^1_{\kappa}}  
+ \left| \frac{ \| \Rlod(v) - v \|_{L^2}  \, \| u \|_{L^2} }{ (\Rlod(\ci u) - \ci u , \ci u )_{L^2} + \| u \|_{L^2}^2 } \right| \|  u  \|_{H^1_{\kappa}}. \hspace{100pt} 
\end{eqnarray*}
Here we note that Lemma \ref{estimates-Ritz-projec}, the estimate $\|u \|_{H^1_{\kappa}} \lesssim \| u\|_{L^2}$ in Lemma \ref{lem:existence}, and $h \lesssim \kappa^{-1}$ imply
\begin{align*}
 (\Rlod(\ci u) - \ci u , \ci u )_{L^2} + \| u \|_{L^2}^2 \ge 
  \| u \|_{L^2}^2 - \| \Rlod(\ci u) - \ci u \|_{L^2}  \| u \|_{L^2} 
\gtrsim (1 - h \kappa)  \| u \|_{L^2}^2 > 0.
\end{align*}
Hence, the estimate for $\| v - \Rlodorth (v) \|_{H^1_{\kappa}}$ becomes 
\begin{eqnarray*}
\| v - \Rlodorth (v) \|_{H^1_{\kappa}}  
&\lesssim&
\| v - \Rlod (v) \|_{H^1_{\kappa}}  
+  \| v - \Rlod (v) \|_{L^2}
\,\,\, \lesssim \,\,\, \| v - \Rlod (v) \|_{H^1_{\kappa}}.   
\end{eqnarray*}
Note that we have analogously (and using Lemma~\ref{estimates-Ritz-projec}) that
\begin{eqnarray*}
\| v - \Rlodorth (v) \|_{L^2}  
&\lesssim&
 \| v - \Rlod (v) \|_{L^2}
\,\,\, \lesssim \,\,\, h\kappa\, \| v - \Rlod (v) \|_{H^1_{\kappa}}.
\end{eqnarray*}
Hence, for $h \kappa \lesssim 1$, we have
\begin{eqnarray*}
\| z_f - \Rlodorth (z_f) \|_{L^2} &\lesssim& \, h\kappa \| z_f - \Rlod (z_f) \|_{H^1_{\kappa}}
\quad
\mbox{and}
\quad
\| z_f - \Rlodorth (z_f) \|_{H^1_{\kappa}}
\,\,\, \lesssim \,\,\, \| z_f - \Rlod (z_f) \|_{H^1_{\kappa}}.
\end{eqnarray*}
Next, we recall from equation \eqref{alt-char-z} in Lemma~\ref{senv-theorem-reg-secE-pde} that $z_f \in H^2(\Omega)$ solves
\begin{align*}
a( z_f , v ) = ( g ,v) \qquad \mbox{for all } v\in H^1(\Omega), 
\end{align*}
where $g = f + (1 \!-\!  |u|^2  ) z_f - 2\, \Re  (u \overline{z_f}) \, u  -  \| u\|_{L^2}^{-2} (  f,   \ci u  )  \, \ci u$.
Consequently, we can apply Lemma~\ref{estimates-Ritz-projec} to obtain
\begin{eqnarray*}
\inf_{v_h \in \Vlod \cap \orthiu} \| z_f  -  v_h \|_{H^1_{\kappa}} 
&\lesssim&
\| z_f - \Rlod (z_f) \|_{H^1_{\kappa}}
\,\,\, \lesssim \,\,\, h \kappa \| g - \Ph g \|_{L^2} \,\,\, \lesssim \,\,\, (h \kappa)^2 \| g \|_{H^1_{\kappa}}.
\end{eqnarray*}
It remains to show that $\| g \|_{H^1_{\kappa}} \lesssim \| f \|_{H^1_{\kappa}} + \Csol \, \| f \|_{L^2}$. For this, we exploit the stability bound $\| z_f \|_{H^1_{\kappa}}
\lesssim \Csol \, \| f \|_{L^2}$ 
from Lemma \ref{senv-theorem-reg-secE-pde} together with $\Csol\gtrsim 1$,\, $|u(x)| \le 1$ and $\| u \|_{H^1_{\kappa}} \lesssim 1$ 
to obtain
\begin{align*}
\|  f + (1 \!-\!  |u|^2  ) z_f - 2\, \Re  (u \overline{z_f}) \, u \|_{H^1_{\kappa}} \,\,\,\lesssim \,\,\, \tfrac{1}{\kappa} \| \nabla f \|_{L^2} +  \rho(\kappa) \, \| f \|_{L^2}
\,\,\,\lesssim \,\,\, \rho(\kappa) \, \| f \|_{H^1_{\kappa}}.
\end{align*}
For the last term in $g$, we use $\| u\|_{H^1_{\kappa}} \lesssim \| u\|_{L^2}$ to obtain straightforwardly 
\begin{align*}
 \frac{|(  f,   \ci u  )|}{\| u\|_{L^2}^{2} }  \, \| u \|_{H^1_{\kappa}} \,\,\,\le \,\,\, \|f\|_{L^2}  \frac{ \| u \|_{H^1_{\kappa}} }{ \| u\|_{L^2} }  \,\,\,\lesssim \,\,\,  \|f\|_{L^2} .
\end{align*}
Combining the estimates for $g$ finishes the proof.
\end{proof}
The subsequent analysis relies crucially on the Ritz projection corresponding to the elliptic operator  
$E^{\prime\prime}(u)$ restricted to the subspace $\orthiu$.  
For a given function $v \in \orthiu$, we define $\RitzLOD(v) \in \Vlod \cap \orthiu$ as the unique element satisfying  
\begin{align}
\label{eq_def_Rh_rewritten}
\langle E^{\prime\prime}(u)\,\RitzLOD(v), v_h \rangle
= \langle E^{\prime\prime}(u)\,v, v_h \rangle 
\qquad \text{for all } v_h \in \Vlod \cap \orthiu.
\end{align}
The coercivity of $E^{\prime\prime}(u)$ on $\orthiu$ ensures that this projection is well defined.  
In the following, we establish error estimates for $\RitzLOD$, which will serve as a key component of the forthcoming analysis.
\begin{lemma}\label{lemma:estimate-ritz-projection}
Assume \ref{A0}--\ref{A2} and $h \lesssim \kappa^{-1}$. 
Let $u \in H^1(\Omega)$ be a quasi-isolated minimizer of $E$. 
Denote by $\RitzLOD$ the $E^{\prime\prime}(u)$--Ritz projection defined in~\eqref{eq_def_Rh_rewritten}, 
and by $\Rlod$ the LOD decomposition operator introduced in~\eqref{def-LOD-projection}. 
Then, for any $v \in \orthiu$, the following estimates hold:
\begin{align*}
\| v - \RitzLOD(v) \|_{H^1_{\kappa}}  
&\lesssim  \| v - \Rlod(v) \|_{H^1_{\kappa}} 
  \,+\,  \rho(\kappa)\,(h\kappa)^2\,\| v - \RitzLOD(v) \|_{L^2}, \\[0.5em]
\| v - \RitzLOD(v) \|_{L^2} 
&\lesssim  \big( h\kappa \,+\, \rho(\kappa)\,(h\kappa)^2 \big) 
  \| v - \RitzLOD(v) \|_{H^1_{\kappa}}.
\end{align*}
In particular, if $\rho(\kappa)\,(h\kappa)^2 \lesssim 1$, then
\begin{align*}
\| v - \RitzLOD(v) \|_{H^1_{\kappa}}  
&\lesssim  \| v - \Rlod(v) \|_{H^1_{\kappa}}.
\end{align*}
Finally, note that the term $\|v - \Rlod(v)\|_{H^1_{\kappa}}$ 
can be further estimated using Lemma~\ref{estimates-Ritz-projec} 
and Corollary~\ref{cor-quasi-best-approx} to obtain quantitative error estimates.
\end{lemma}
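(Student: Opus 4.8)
The plan is to prove the two estimates for the $E''(u)$--Ritz projection $\RitzLOD$ by the standard Aubin--Nitsche / Céa-type strategy adapted to the $H^1_\kappa$-scaled norms and to the constraint that all functions live in the singular-direction complement $\orthiu$. First I would establish the $H^1_\kappa$-estimate. Let $e := v - \RitzLOD(v) \in \orthiu$ and write $e = (v - \Rlod(v)) + (\Rlod(v) - \RitzLOD(v))$; but since $\RitzLOD(v)$ need not coincide with a convenient element, I prefer the standard quasi-optimality route: by the coercivity of $E''(u)$ on $\orthiu$ from Lemma~\ref{lem:coercivity}, for any $w_h \in \Vlod \cap \orthiu$ we have
\begin{align*}
\rho(\kappa)^{-1}\|\RitzLOD(v)-w_h\|_{H^1_\kappa}^2 \lesssim \langle E''(u)(\RitzLOD(v)-w_h),\RitzLOD(v)-w_h\rangle = \langle E''(u)(v-w_h),\RitzLOD(v)-w_h\rangle,
\end{align*}
using the Galerkin orthogonality \eqref{eq_def_Rh_rewritten}. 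Applying continuity of $E''(u)$ (Lemma~\ref{lem:coercivity}) to the right-hand side and dividing gives $\|\RitzLOD(v)-w_h\|_{H^1_\kappa} \lesssim \rho(\kappa)\|v-w_h\|_{H^1_\kappa}$, whence by the triangle inequality $\|v-\RitzLOD(v)\|_{H^1_\kappa} \lesssim \rho(\kappa)\inf_{w_h}\|v-w_h\|_{H^1_\kappa}$. This is a valid but $\rho(\kappa)$-lossy bound; the sharper statement in the lemma, with a $\rho(\kappa)$-free leading term, must come from a duality argument that isolates the loss into the lower-order $L^2$-term.

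The key idea for the sharp $H^1_\kappa$-bound is therefore to split $E''(u)$ into its kinetic part $a(\cdot,\cdot)$ from \eqref{eq:abeta} and a bounded zeroth-order remainder. Writing $\langle E''(u)v,w\rangle = a(v,w) + b(v,w)$, where $b(v,w) = \Re\int_\Omega (|u|^2-1)v\overline w + 2\Re(u\overline v)u\overline w\,\dx$ is controlled pointwise by $\|v\|_{L^2}\|w\|_{L^2}$ using $|u|\le 1$ from Lemma~\ref{lem:existence}, I would test the Galerkin orthogonality with the $a$-based projection error. Using that $a$ is coercive on $W=\ker(\Ph)$ and that $e\in W$ up to the correction coming from $\orthiu$, the kinetic contribution reproduces $\|v-\Rlod(v)\|_{H^1_\kappa}$ as the best-approximation term, while the remainder $b(e,\cdot)$ contributes a term of size $\rho(\kappa)(h\kappa)^2\|e\|_{L^2}$ once the $L^2$-duality scaling $(h\kappa)^2$ and the coercivity constant $\rho(\kappa)$ are accounted for. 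This yields the first displayed inequality.

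For the $L^2$-estimate I would run the Aubin--Nitsche duality: given $e = v - \RitzLOD(v)$, introduce the dual solution $z_e \in \orthiu$ to $\langle E''(u)z_e,\varphi\rangle = (e,\varphi)$ for all $\varphi\in\orthiu$, which exists and has the regularity and stability from Lemma~\ref{senv-theorem-reg-secE-pde}. Then $\|e\|_{L^2}^2 = (e,e) = \langle E''(u)z_e,e\rangle = \langle E''(u)e,z_e\rangle$, and inserting Galerkin orthogonality against $\RitzLODperp(z_e)\in\Vlod\cap\orthiu$ gives $\|e\|_{L^2}^2 = \langle E''(u)e,\, z_e - \RitzLODperp(z_e)\rangle \lesssim \|e\|_{H^1_\kappa}\|z_e-\RitzLODperp(z_e)\|_{H^1_\kappa}$ by continuity. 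Here I apply Corollary~\ref{lemma:eta-hp-estimate} with $f=e$ to bound $\|z_e-\RitzLODperp(z_e)\|_{H^1_\kappa} \lesssim (h\kappa)^2(\|e\|_{H^1_\kappa}+\rho(\kappa)\|e\|_{L^2})/(h\kappa)$; tracking the $h\kappa$ powers carefully produces the factor $(h\kappa + \rho(\kappa)(h\kappa)^2)$ multiplying $\|e\|_{H^1_\kappa}$. The combined claim under $\rho(\kappa)(h\kappa)^2\lesssim 1$ then follows by substituting the $L^2$-bound into the $H^1_\kappa$-bound and absorbing the resulting $\rho(\kappa)^2(h\kappa)^4\|v-\RitzLOD(v)\|_{H^1_\kappa}$ term on the left.

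The main obstacle I anticipate is bookkeeping the coercivity constant $\rho(\kappa)$ and the $h\kappa$-scalings simultaneously so that the leading $H^1_\kappa$-term remains $\rho(\kappa)$-free, which is exactly the refinement over the naive Céa estimate; this requires invoking the sharp duality estimate of Corollary~\ref{lemma:eta-hp-estimate} (which already packages the correct $(h\kappa)^3$ and $\rho(\kappa)$ dependence) rather than a generic best-approximation bound, and carefully handling the projection onto $\orthiu$ via $\RitzLODperp$ so that the phase-constraint corrections stay lower-order, as guaranteed by the norm-equivalences in Corollary~\ref{lemma:eta-hp-estimate}.
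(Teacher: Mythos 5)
Your $L^2$-estimate (third paragraph) is essentially the paper's own argument: Aubin--Nitsche duality in $\orthiu$, Galerkin orthogonality against the projection of the dual solution, the approximation bound of Corollary~\ref{lemma:eta-hp-estimate} with data $f = v - \RitzLOD(v)$, and Young's inequality to absorb the $L^2$-term. Two small slips there: the projection with property \eqref{def-eta-h-p} is the one denoted $\Rlodorth$ in Corollary~\ref{lemma:eta-hp-estimate} (not $\RitzLODperp$), and that corollary gives $\| z_e - \Rlodorth(z_e) \|_{H^1_{\kappa}} \lesssim (h\kappa)^2 \bigl( \|e\|_{H^1_{\kappa}} + \rho(\kappa)\|e\|_{L^2} \bigr)$, i.e.\ $(h\kappa)^3$ divided by $h\kappa$, not $(h\kappa)^2$ divided by $h\kappa$ as you wrote; the factor $h\kappa + \rho(\kappa)(h\kappa)^2$ you announce only comes out with the correct power.

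The genuine gap is in your second paragraph, i.e.\ in the proof of the first displayed ($H^1_{\kappa}$) inequality, and it is twofold. First, the error $e = v - \RitzLOD(v)$ does \emph{not} lie in $W = \ker(\Ph)$, not even ``up to the correction coming from $\orthiu$'': $\RitzLOD$ is defined by $E''(u)$-orthogonality onto $\Vlod \cap \orthiu$, not through the LOD decomposition, so there is no reason for $\Ph(e)$ to vanish (only $v - \Rlod(v)$ belongs to $W$). Hence the coercivity of $a(\cdot,\cdot)$ on $W$ from Lemma~\ref{lem:abeta_coercivity} cannot be applied to $e$, and the step ``the kinetic contribution reproduces $\|v - \Rlod(v)\|_{H^1_{\kappa}}$'' is unjustified. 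Second, the claim that the zeroth-order remainder $b(e,\cdot)$ contributes a term of size $\rho(\kappa)(h\kappa)^2\|e\|_{L^2}$ is asserted rather than derived: $b$ is a zeroth-order form and by itself produces only $\|e\|_{L^2}\|\cdot\|_{L^2}$ with no power of $h$ at all. In the paper these factors do not come from any splitting of $E''(u)$; they come from a \emph{second} use of the dual problem inside the $H^1_{\kappa}$-argument (Schatz's argument): one starts from the G{\aa}rding inequality of Lemma~\ref{senv-lemma-gaarding}, replaces $\|e\|_{L^2}^2$ by $\langle E''(u)\xi, e\rangle$ with the dual solution $\xi \in \orthiu$, applies Galerkin orthogonality to both resulting terms, and bounds $\|\xi - \Rlodorth(\xi)\|_{H^1_{\kappa}} \lesssim (h\kappa)^2\bigl(\|e\|_{H^1_{\kappa}} + \rho(\kappa)\|e\|_{L^2}\bigr)$ by Corollary~\ref{lemma:eta-hp-estimate}, absorbing the $(h\kappa)^2\|e\|_{H^1_{\kappa}}$ contribution into the left-hand side. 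Without this, your route gives at best the weaker bound $\|e\|_{H^1_{\kappa}} \lesssim \|v - \Rlod(v)\|_{H^1_{\kappa}} + \|e\|_{L^2}$ (G{\aa}rding plus Galerkin orthogonality plus continuity), which, combined with your $L^2$-estimate, does yield the ``in particular'' conclusion under $\rho(\kappa)(h\kappa)^2 \lesssim 1$ --- and that is all the paper uses downstream --- but it does not prove the first displayed inequality of the lemma with its $\rho(\kappa)(h\kappa)^2$-weighted $L^2$-term, which is what is actually stated.
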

\begin{proof}
We use a so-called Schatz argument \cite{Sch74}. First, the G{\aa}rding inequality in Lemma \ref{senv-lemma-gaarding} yields
\begin{align*}
\|  v - \RitzLOD(v)  \|_{H^1_{\kappa}}^2
\,\,\lesssim\,\,
\langle E^{\prime\prime} (u) ( v - \RitzLOD(v) ) ,  v - \RitzLOD(v)  \rangle
+ \|  v - \RitzLOD(v)  \|_{L^2}^2.
\end{align*}
If $\xi \in \orthiu$ solves
\begin{align*}
\langle E^{\prime\prime} (u) \xi , \phi \rangle  = (  v - \RitzLOD(v)  , \phi ) \qquad \mbox{for all } \phi \in \orthiu,
\end{align*}
we obtain for arbitrary $\xi_h, v_h \in \Vlod \cap \orthiu$ that
\begin{eqnarray*}
\lefteqn{
\| \RitzLOD(v)-v \|_{H^1_{\kappa}}^2 \,\,\lesssim\,\, \langle E^{\prime\prime} (u) (\RitzLOD(v)-v +\xi ) , \RitzLOD(v)-v \rangle }\\
&=&  \langle E^{\prime\prime} (u) (v_h -v ) , \RitzLOD(v)-v \rangle + \langle E^{\prime\prime} (u) (\xi -\xi_h ) , \RitzLOD(v)-v \rangle \\
&\lesssim& \left( \|   v_h -v \|_{H^1_{\kappa}} +  \|  \xi -\xi_h \|_{H^1_{\kappa}} \right)  \|  \RitzLOD(v)-v  \|_{H^1_{\kappa}}.
\end{eqnarray*}
Dividing by $ \|  \RitzLOD(v)-v  \|_{H^1_{\kappa}}$ and since $v_h$ is arbitrary, we have
\begin{eqnarray*}
\| v- \RitzLOD(v) \|_{H^1_{\kappa}}  
&\lesssim& \inf_{v_h \in \Vlod \cap \orthiu}  \|  v - v_h \|_{H^1_{\kappa}} +  \|  \xi -\xi_h \|_{H^1_{\kappa}} .
\end{eqnarray*}
Now select $v_h=\Rlodorth(v)$ and $\xi_h= \Rlodorth(\xi)$ for the projection $\Rlodorth : \orthiu \to \Vlod \cap \orthiu$ from Corollary \ref{lemma:eta-hp-estimate}, then we have
\begin{align*}
 \|  \xi -\xi_h \|_{H^1_{\kappa}} \,\,\,\lesssim\,\,\,
(h \kappa)^2 \, \|  v - \RitzLOD(v) \|_{H^1_{\kappa}}  + \rho(\kappa) \, (h \kappa)^2 \, \| v - \RitzLOD(v) \|_{L^2}
\end{align*}
and consequently by making $(h \kappa)^2$ sufficiently small to absorb $(h \kappa)^2 \, \|  v - \RitzLOD(v) \|_{H^1_{\kappa}}$ into the left hand side we obtain
\begin{eqnarray*}
\| v - \RitzLOD(v) \|_{H^1_{\kappa}}  
&\lesssim&  \|  v - \Rlod(v)  \|_{H^1_{\kappa}} \,\,+\,\,  \rho(\kappa) \, (h \kappa)^2 \, \| v - \RitzLOD(v) \|_{L^2}.
\end{eqnarray*}
To establish the $L^2$-estimate we exploit the Galerkin orthogonality for $\RitzLOD(v) - v $, i.e.,
\begin{eqnarray*}
\langle E^{\prime\prime}(u) (\RitzLOD(v) - v), \xi_h \rangle & = & 0 \qquad \mbox{for all } \xi_h \in \Vlod \cap \orthiu,
\end{eqnarray*}
to get
\begin{eqnarray*}
\lefteqn{
\|  v - \RitzLOD(v) \|_{L^2}^2 \,\,\,\lesssim\,\,\,  \langle E^{\prime\prime}(u) \xi  ,  v - \RitzLOD(v) \rangle 
\,\,\,=\,\,\,  \langle E^{\prime\prime}(u) (\xi - \Rlodorth(\xi) ) ,  v - \RitzLOD(v) \rangle } \\
&\lesssim& \| \xi - \Rlodorth(\xi)  \|_{H^1_{\kappa}} \,\,\, \| \RitzLOD(v)-v  \|_{H^1_{\kappa}}  \\
&\lesssim& 
( h \kappa)^2 \, \|  v - \RitzLOD(v) \|_{H^1_{\kappa}}^2  + \rho(\kappa) \, (h \kappa)^2 \, \| v - \RitzLOD(v) \|_{L^2}  \| \RitzLOD(v)-v  \|_{H^1_{\kappa}}.
\end{eqnarray*}
Hence, Young's inequality implies
\begin{eqnarray*}
\|  v - \RitzLOD(v) \|_{L^2} &\lesssim&
\left( h \kappa \,  + \, \rho(\kappa) \, (h \kappa)^2 \, \right) \,  \| \RitzLOD(v)-v  \|_{H^1_{\kappa}}.
\end{eqnarray*}
\end{proof}
Having control over the Ritz-projection error $u - \RitzLOD(u)$, we next turn to towards the defect $\ulod - \RitzLOD(u)$.
\begin{lemma}
\label{lemma:H1-est-Rh}
Assume \ref{A0}--\ref{A2}, let $u \in H^1(\Omega)$ be a quasi-isolated minimizer of $E$ and $\ulod \in \Vlod \cap \orthiu$ a solution to the discrete GLE, i.e., $\langle E^{\prime}(\ulod),v_h\rangle = 0 $ for all $v_h \in \Vlod$.

Then, there exists a $\kappa$-independent constant $\eta>0$, such that if \,$(h \kappa)^2  \rho(\kappa) \le \eta$\, it holds
\begin{eqnarray*}
\| \RitzLOD(u)-\ulod \|_{H^1_{\kappa}}
&\lesssim& \rho(\kappa) \left(  \kappa^{d/4} \| u-\ulod\|_{L^4} + \kappa^{d/2} \| u-\ulod\|_{L^4}^2 \right) \| u-\ulod\|_{H^1_{\kappa}}.
\end{eqnarray*}
\end{lemma}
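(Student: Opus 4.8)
The plan is to control $e_h := \RitzLOD(u) - \ulod$, which lies in $\Vlod \cap \orthiu$ since both $\RitzLOD(u)$ and $\ulod$ do, by testing it against itself in the coercive form $E''(u)$. Because $e_h \in \orthiu$, the coercivity of Lemma~\ref{lem:coercivity} gives $\rho(\kappa)^{-1}\|e_h\|_{H^1_\kappa}^2 \le \langle E''(u) e_h, e_h\rangle$, so the whole task reduces to bounding $\langle E''(u) e_h, e_h\rangle$ by the asserted product of norms of $d := u - \ulod$.

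First I would rewrite the energy form using the two defining relations at hand. Testing the Ritz identity~\eqref{eq_def_Rh_rewritten} with $e_h \in \Vlod \cap \orthiu$ yields $\langle E''(u)\RitzLOD(u), e_h\rangle = \langle E''(u) u, e_h\rangle$, hence $\langle E''(u) e_h, e_h\rangle = \langle E''(u) d, e_h\rangle$. Simultaneously, $\langle E'(u), e_h\rangle = 0$ (as $u$ is critical on all of $H^1(\Omega)$) and $\langle E'(\ulod), e_h\rangle = 0$ (as $\ulod$ solves the discrete GLE and $e_h \in \Vlod$), so that $\langle E'(u) - E'(\ulod), e_h\rangle = 0$. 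Subtracting this vanishing term, the linear contribution $a(d,e_h)$ cancels and, with the cubic nonlinearity $f(v) := (|v|^2-1)v$, I obtain
\[
\langle E''(u) e_h, e_h\rangle = \Re \int_\Omega \bigl( Df(u)[d] - (f(u) - f(\ulod)) \bigr)\,\overline{e_h}\dx.
\]
Since $f$ is cubic in $(v,\overline v)$, its Taylor expansion around $u$ terminates and is exact; expanding $f(\ulod) = f(u-d)$ thus collapses the integrand to the pure remainder $\tfrac12 D^2 f(u)[d,d] - \tfrac16 D^3 f(u)[d,d,d]$, which is quadratic-plus-cubic in $d$.

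Next I would estimate this remainder. A short computation gives the pointwise bounds $|D^2 f(u)[d,d]| \lesssim |u|\,|d|^2$ and $|D^3 f(u)[d,d,d]| \lesssim |d|^3$; expanding around $u$ rather than $\ulod$ is essential here, as it allows the pointwise bound $|u| \le 1$ from Lemma~\ref{lem:existence}, for which no discrete analogue on $\ulod$ is available. Thus $|\langle E''(u) e_h, e_h\rangle| \lesssim \int_\Omega (|d|^2 + |d|^3)\,|e_h|\dx$. Hölder's inequality gives $\int_\Omega |d|^2|e_h|\dx \le \|d\|_{L^4}^2\|e_h\|_{L^2}$ and $\int_\Omega |d|^3|e_h|\dx \le \|d\|_{L^4}^3\|e_h\|_{L^4}$, while the Gagliardo--Nirenberg estimate in the $\kappa$-explicit form $\|w\|_{L^4} \lesssim \kappa^{d/4}\|w\|_{H^1_\kappa}$ (coming from $\|\nabla w\|_{L^2} \le \kappa\|w\|_{H^1_\kappa}$) turns $\|e_h\|_{L^4}$ into $\kappa^{d/4}\|e_h\|_{H^1_\kappa}$; together with $\|e_h\|_{L^2} \le \|e_h\|_{H^1_\kappa}$, dividing by $\|e_h\|_{H^1_\kappa}$ leaves $\|e_h\|_{H^1_\kappa} \lesssim \rho(\kappa)\bigl(\|d\|_{L^4}^2 + \kappa^{d/4}\|d\|_{L^4}^3\bigr)$.

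The final and most delicate step is the bookkeeping that matches the stated powers: applying the same embedding to exactly one factor of $\|d\|_{L^4}$ gives $\|d\|_{L^4}^2 \lesssim \kappa^{d/4}\|d\|_{L^4}\|d\|_{H^1_\kappa}$ and $\kappa^{d/4}\|d\|_{L^4}^3 \lesssim \kappa^{d/2}\|d\|_{L^4}^2\|d\|_{H^1_\kappa}$, reproducing exactly the two terms $\kappa^{d/4}\|u-\ulod\|_{L^4}$ and $\kappa^{d/2}\|u-\ulod\|_{L^4}^2$ premultiplying $\|u-\ulod\|_{H^1_\kappa}$. I expect the main obstacle to be this careful distribution of the $L^4$-norms together with the exact evaluation of the Taylor remainder; by contrast, the resolution condition $(h\kappa)^2\rho(\kappa) \le \eta$ serves mainly to secure well-posedness of the discrete minimization and of $\RitzLOD$, so that the displayed inequality then follows as an essentially algebraic consequence of coercivity and the Hölder/Gagliardo--Nirenberg bounds.
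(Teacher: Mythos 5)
Your proof is correct, but it takes a genuinely different route than the paper. Where you invoke the coercivity of $\langle E''(u)\cdot,\cdot\rangle$ on $\orthiu$ (Lemma \ref{lem:coercivity}) directly---which is legitimate, since $e_h = \RitzLOD(u)-\ulod$ lies in $\Vlod\cap\orthiu \subset \orthiu$---the paper instead runs a Schatz-type argument: it starts from the G{\aa}rding inequality (Lemma \ref{senv-lemma-gaarding}), introduces the dual solution $\xi\in\orthiu$ of $\langle E''(u)\xi, v\rangle = (\ulod - \RitzLOD(u), v)$, approximates it by $\Rlodorth(\xi)$ via Corollary \ref{lemma:eta-hp-estimate}, and absorbs the term $(h\kappa)^2\rho(\kappa)\|\RitzLOD(u)-\ulod\|_{H^1_{\kappa}}^2$ into the left-hand side; this absorption is precisely where the resolution condition $(h\kappa)^2\rho(\kappa)\le\eta$ enters. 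From that point on the two arguments coincide in substance: the Ritz property, the two first-order conditions, and the exact cubic Taylor remainder---your $\tfrac12 D^2f(u)[d,d]-\tfrac16 D^3f(u)[d,d,d]$ is exactly the paper's integrand $2u|u-\ulod|^2+(u-\ulod)^2\overline{u}-|u-\ulod|^2(u-\ulod)$---followed by the same H\"older and Gagliardo--Nirenberg bookkeeping with the identical distribution of $L^4$-norms. What your version buys: it is shorter, bypasses the dual problem and the $\Rlodorth$ machinery entirely, and in fact establishes the estimate with no resolution condition at all, since the $\rho(\kappa)$ prefactor comes straight from coercivity rather than from the bound $\|\xi\|_{H^1_{\kappa}}\lesssim\rho(\kappa)\|\RitzLOD(u)-\ulod\|_{L^2}$; as the lemma only asserts the estimate under the condition, your unconditional version is if anything stronger. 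What the paper's template buys is uniformity with Lemma \ref{lemma:estimate-ritz-projection}, where the G{\aa}rding-plus-duality structure is genuinely needed to avoid a $\rho(\kappa)$ factor on the best-approximation term; here, since the target bound already carries a $\rho(\kappa)$ prefactor, direct coercivity loses nothing. One small inaccuracy in your closing remark: the resolution condition is not needed for the well-posedness of $\RitzLOD$ either---coercivity of $E''(u)$ on $\orthiu$, restricted to the subspace $\Vlod\cap\orthiu$, already guarantees that---so in your argument the condition plays no role whatsoever.
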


\begin{proof}
We proceed as in the estimate for  $u - \RitzLOD(u)$. With the G{\aa}rding inequality (Lemma \ref{senv-lemma-gaarding}) we have 
\begin{align*}
\|  \ulod - \RitzLOD(u)  \|_{H^1_{\kappa}}^2
\,\,\lesssim\,\,
\langle E^{\prime\prime} (u) ( \ulod - \RitzLOD(u) ) ,  \ulod - \RitzLOD(u)  \rangle
\,+\,
\|  \ulod - \RitzLOD(u)  \|_{L^2}^2.
\end{align*}
If $\xi \in \orthiu$ is the unique solution to
\begin{align*}
\langle E^{\prime\prime} (u) \xi , v \rangle
=
(  \ulod - \RitzLOD(u)  , v ) \qquad \mbox{for all } v \in \orthiu,
\end{align*}
we hence obtain for $\Rlodorth(\xi) \in  \Vlod \cap \orthiu$ that
\begin{eqnarray*}
\lefteqn{
\| \RitzLOD(u)-\ulod \|_{H^1_{\kappa}}^2 \,\,\lesssim\,\, \langle E^{\prime\prime} (u) (\RitzLOD(u)-\ulod+\xi) , \RitzLOD(u)-\ulod \rangle }\\
&=&  \langle E^{\prime\prime} (u) \xi , \RitzLOD(u)-\ulod \rangle + \langle E^{\prime\prime} (u) (\RitzLOD(u)-\ulod) , \RitzLOD(u)-\ulod \rangle \\
&=&   \langle E^{\prime\prime} (u) (\xi  - \Rlodorth(\xi) ) , \RitzLOD(u)-\ulod \rangle \, + \, \langle E^{\prime\prime} (u) (\RitzLOD(u)-\ulod + \Rlodorth(\xi) ) , u-\ulod \rangle \\
&\lesssim& \| \xi  - \Rlodorth(\xi) \|_{H^1_{\kappa}}  \, \| \RitzLOD(u)-\ulod \|_{H^1_{\kappa}} \, + \, \langle E^{\prime\prime} (u) (\RitzLOD(u)-\ulod + \Rlodorth(\xi) ) , u-\ulod \rangle \\
&\overset{\eqref{def-eta-h-p}}{\lesssim}& \left( 
(h \kappa)^2 \left(  \| \RitzLOD(u)-\ulod \|_{H^1_{\kappa}}  + \rho(\kappa) \| \RitzLOD(u)-\ulod \|_{L^2} \right) 
\right) \| \RitzLOD(u)-\ulod \|_{H^1_{\kappa}}  \\
&\enspace&\quad \, + \,\, \langle E^{\prime\prime} (u) (\RitzLOD(u)-\ulod + \Rlodorth(\xi) ) , u-\ulod \rangle  \\
&\lesssim& (h \kappa)^2  \rho(\kappa)   \| \RitzLOD(u)-\ulod \|_{H^1_{\kappa}}^2 
 \, + \, \langle E^{\prime\prime} (u) (\RitzLOD(u)-\ulod + \Rlodorth(\xi) ) , u-\ulod \rangle . 
\end{eqnarray*}
For $(h \kappa)^2  \rho(\kappa)$ sufficiently small, we can absorb $(h \kappa)^2  \rho(\kappa)   \| \RitzLOD(u)-\ulod \|_{H^1_{\kappa}}^2 $ into the left hand side to obtain
\begin{eqnarray}
\label{preliminary-est-Rlod-u-ulod}
\| \RitzLOD(u)-\ulod \|_{H^1_{\kappa}}^2 
&\lesssim& \langle E^{\prime\prime} (u) (\RitzLOD(u)-\ulod + \Rlodorth(\xi) ) , u-\ulod \rangle . 
\end{eqnarray}
To estimate the right hand side, we exploit the first order conditions for minimizers (i.e. the Ginzburg-Landau equations)
\begin{align*}
\langle E^{\prime}(u) , v_h \rangle \,\,=\,\, \langle E^{\prime}(\ulod) , v_h \rangle  \,\, =\,\,  0 \quad \mbox{for all } v_h\in \Vlod
\end{align*}
to write
\begin{eqnarray*}
\lefteqn{ \langle E^{\prime\prime} (u)( u-\ulod) ,v_h \rangle \,\,\,=\,\,\, \langle E^{\prime\prime} (u) u -  E^{\prime\prime} (\ulod) \ulod  ,v_h \rangle  
+ \langle ( E^{\prime\prime} (\ulod) - E^{\prime\prime} (u) ) \ulod  ,v_h \rangle } \\
&=& 2 ( |u|^2 u , v_h )  +  ( |\ulod|^2 \ulod , v_h )  - (2|u|^2 \ulod + u^2 \overline{\ulod} , v_h )  \\
&=&  ( 2 u |u-\ulod|^2 + (u-\ulod)^2 \overline{u} - |u-\ulod|^2 (u-\ulod)  , v_h ) . \hspace{90pt}
\end{eqnarray*}
Using this in \eqref{preliminary-est-Rlod-u-ulod} together with $\| u \|_{L^{\infty}} \le 1$ yields
\begin{eqnarray*}
\lefteqn{\| \RitzLOD(u)-\ulod \|_{H^1_{\kappa}}^2  }\\
&\lesssim&   \| u-\ulod\|_{L^4}^2 \, \| \RitzLOD(u) - \ulod + \Rlodorth(\xi) \|_{L^2} + \| u-\ulod\|_{L^4}^3  \, \| \RitzLOD(u) - \ulod + \Rlodorth(\xi) \|_{L^4}.
\end{eqnarray*}
Exploiting the weighted Gagliardo--Nirenberg inequality $\| v \|_{L^4} \lesssim \kappa^{d/4} \| v \|_{H^1_{\kappa}}$ (for $d=2,3$, cf. \cite{CFH25}) we obtain
\begin{eqnarray}
\label{est-ritzlod-proof-1}
\nonumber \lefteqn{ \| \RitzLOD(u)-\ulod \|_{H^1_{\kappa}}^2 
\,\,\,\lesssim\,\,\,  \left( \| u-\ulod\|_{L^4}^2 + \kappa^{d/4} \| u-\ulod\|_{L^4}^3 \right) \, \| \RitzLOD(u) - \ulod + \Rlodorth(\xi) \|_{H^1_{\kappa}} }\\
&\lesssim&  \left( \| u-\ulod\|_{L^4}^2 + \kappa^{d/4} \| u-\ulod\|_{L^4}^3 \right) \,
\left( \, \| \RitzLOD(u) - \ulod \|_{H^1_{\kappa}}  + \| \xi \|_{H^1_{\kappa}} \right) \\
\nonumber&\lesssim&  \left( \| u-\ulod\|_{L^4}^2 + \kappa^{d/4} \| u-\ulod\|_{L^4}^3 \right) \,\, (1+ \rho(\kappa))\,
\| \RitzLOD(u) - \ulod \|_{H^1_{\kappa}} .\hspace{80pt}
\end{eqnarray}
Note that we used here that $\Rlodorth$ is $H^1_{\kappa}$-stable on $\orthiu$, implying $\|\Rlodorth(\xi) \|_{H^1_{\kappa}} \lesssim  \|\xi \|_{H^1_{\kappa}}$.  This follows from the $H^1_{\kappa}$-stability of $\Rlod$ together with the estimate  $\| v - \Rlodorth(v) \|_{H^1_{\kappa}} \lesssim \| v - \Rlod(v) \|_{H^1_{\kappa}}$  for all $v \in \orthiu$, stated in Corollary~\ref{lemma:eta-hp-estimate}. As $\rho(\kappa) \gtrsim 1$, estimate \eqref{est-ritzlod-proof-1} becomes
\begin{eqnarray*}
\| \RitzLOD(u)-\ulod \|_{H^1_{\kappa}}
&\lesssim& \rho(\kappa) \left( \| u-\ulod\|_{L^4}^2 + \kappa^{d/4} \| u-\ulod\|_{L^4}^3 \right).
\end{eqnarray*}
Using once more the Gagliardo--Nirenberg inequality $\| v \|_{L^4} \lesssim \kappa^{d/4} \| v \|_{H^1_{\kappa}}$ finishes the proof.
\end{proof}

\subsubsection{Discrete minimizers close to $u$}

Lemma \ref{lemma:estimate-ritz-projection} and Lemma \ref{lemma:H1-est-Rh} provide the desired $H^1_{\kappa}$-estimate, if we can state a condition that allows us to ensure that the higher order contribution $ \rho(\kappa) \left( \kappa^{d/4} \| u- \ulod \|_{L^4} +  \kappa^{d/2} \| u- \ulod \|_{L^4}^2 \right)$ becomes sufficiently small so that
\begin{align*}
\rho(\kappa) \left( \kappa^{d/4} \| u-\ulod\|_{L^4} +  \kappa^{d/2} \| u-\ulod\|_{L^4}^2 \right)  \,  \| u-\ulod\|_{H^1_{\kappa}} 
\, \,\ll \, \,
\| u- \ulod \|_{H^1_{\kappa}}.
\end{align*}
Following the strategy developed in~\cite{CFH25}, we use a fixed-point argument to show that for each quasi-isolated minimizer~$u$ there exists a nearby discrete minimizer~$\ulod$, whose distance to~$u$ in the $L^4$- and $H^1_{\kappa}$-norms admits a $\kappa$-explicit upper bound. For this, we consider $E_{h}^\prime: H^1(\Omega) \to (H^1(\Omega))'$, a linearization of $E'$ around $u$ given by
\begin{align*}
\langle E_h^{\prime}(v) , w \rangle := \langle E^{\prime}(v) , \RitzLOD(w) \rangle + \langle E^{\prime\prime}(u) v,w-  \RitzLOD(w) \rangle
\end{align*}
for $v,w \in H^1(\Omega)$ and where we recall  $\RitzLOD$ as the Ritz projection in~\eqref{eq_def_Rh_rewritten}. Note that the prime in $E_h^\prime$ is purely part of the notation: $E_h^\prime$ does not denote the Fr\'echet derivative of a functional $E_h$. Later, the symbols $E_h^{\prime\prime}$ and $E_h^{\prime\prime\prime}$ will refer to the first and second derivatives of $E_h^\prime$, respectively. 

Using $E_h^{\prime}$, we can give an alternative characterization of discrete minimizers. The proof of the following lemma is fully analogous to that of~\cite[Lemma 5.6]{CFH25}. Since this article also serves as a review, we repeat the short proof for completeness and to make the presentation self-contained.
\begin{lemma}
\label{lemma:characterization:Ehprime}
Assume \ref{A0}--\ref{A2} and let $u \in H^1(\Omega)$ be a quasi-isolated minimizer of $E$. Then
\begin{align*}
\ulod \in \orthiu \,\quad\text{satisfies} \quad\, E_h^{\prime}(\ulod)=0 
\end{align*}
if and only if
\begin{align*}
\ulod \in \Vlod \cap \orthiu 
\qquad \text{and} \qquad
\langle E^{\prime}(\ulod), v_h \rangle = 0 \qquad \text{for all } v_h \in \Vlod \cap \orthiu.
\end{align*}
\end{lemma}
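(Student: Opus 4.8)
The plan is to reduce everything to two defining features of the Ritz projection $\RitzLOD$ from~\eqref{eq_def_Rh_rewritten}. First, $\RitzLOD$ acts as the identity on its target space, i.e. $\RitzLOD(v_h)=v_h$ for all $v_h\in\Vlod\cap\orthiu$; hence the second term in the definition of $E_h^{\prime}$ drops out and $\langle E_h^{\prime}(v),v_h\rangle=\langle E^{\prime}(v),v_h\rangle$ on $\Vlod\cap\orthiu$. Second, the Galerkin orthogonality $\langle E^{\prime\prime}(u)(w-\RitzLOD(w)),v_h\rangle=0$ holds for all $v_h\in\Vlod\cap\orthiu$, so each error $w-\RitzLOD(w)$ lies in the $E^{\prime\prime}(u)$-orthogonal complement $W_\perp:=(\Vlod\cap\orthiu)^{\perp_{E''(u)}}$ of $\Vlod\cap\orthiu$ within $\orthiu$. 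By coercivity (Lemma~\ref{lem:coercivity}), $\langle E^{\prime\prime}(u)\cdot,\cdot\rangle$ is an inner product on $\orthiu$, which yields the orthogonal splitting $\orthiu=(\Vlod\cap\orthiu)\oplus_{E''(u)}W_\perp$ that I will use repeatedly.

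Before the two implications I would record that, although $E_h^{\prime}$ is defined on all of $H^1(\Omega)$, testing it against $\orthiu$ is enough. Writing $w=w^{\perp}+\alpha\,\ci u$ with $w^{\perp}\in\orthiu$ and using $E^{\prime\prime}(u)\,\ci u=0$ together with the symmetry of $E^{\prime\prime}(u)$, the $\ci u$-component contributes $\langle E^{\prime\prime}(u)\ulod,\ci u\rangle=\langle E^{\prime\prime}(u)\,\ci u,\ulod\rangle=0$ whenever $\ulod\in\orthiu$, so $\langle E_h^{\prime}(\ulod),w\rangle=\langle E_h^{\prime}(\ulod),w^{\perp}\rangle$. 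Thus $E_h^{\prime}(\ulod)=0$ on $H^1(\Omega)$ is equivalent to its vanishing on $\orthiu$.

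For the forward implication I assume $\ulod\in\orthiu$ with $E_h^{\prime}(\ulod)=0$. Testing against $v_h\in\Vlod\cap\orthiu$ and using $\RitzLOD(v_h)=v_h$ gives at once the discrete first order condition $\langle E^{\prime}(\ulod),v_h\rangle=0$ for all $v_h\in\Vlod\cap\orthiu$. Feeding this back, the first term $\langle E^{\prime}(\ulod),\RitzLOD(w)\rangle$ vanishes for every $w\in\orthiu$ since $\RitzLOD(w)\in\Vlod\cap\orthiu$, leaving $\langle E^{\prime\prime}(u)\ulod,w-\RitzLOD(w)\rangle=0$ on all of $\orthiu$. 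As $w\mapsto w-\RitzLOD(w)$ surjects onto $W_\perp$ (note $\RitzLOD(\phi)=0$ for $\phi\in W_\perp$ by uniqueness of the Ritz projection), $\ulod$ is $E^{\prime\prime}(u)$-orthogonal to $W_\perp$, and the splitting from the first paragraph forces $\ulod\in\Vlod\cap\orthiu$. The converse is immediate: for $\ulod\in\Vlod\cap\orthiu$ satisfying the discrete first order condition and arbitrary $w\in\orthiu$, the first term of $\langle E_h^{\prime}(\ulod),w\rangle$ vanishes because $\RitzLOD(w)\in\Vlod\cap\orthiu$, and the second because $\ulod\in\Vlod\cap\orthiu$ is $E^{\prime\prime}(u)$-orthogonal to $w-\RitzLOD(w)\in W_\perp$; hence $\langle E_h^{\prime}(\ulod),w\rangle=0$ on $\orthiu$, and on $H^1(\Omega)$ by the reduction above.

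I expect the only genuinely delicate point to be the conclusion $\ulod\in\Vlod\cap\orthiu$ in the forward direction: this is strictly stronger than the first order condition and hinges on identifying the range of $w\mapsto w-\RitzLOD(w)$ with the full complement $W_\perp$ and on the coercivity-based orthogonal decomposition of $\orthiu$. Everything else is a direct substitution using $\RitzLOD(v_h)=v_h$ on $\Vlod\cap\orthiu$ and the Ritz Galerkin orthogonality.
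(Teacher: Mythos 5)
Your proof is correct and follows essentially the same route as the paper: test $E_h^{\prime}(\ulod)$ against $\Vlod\cap\orthiu$ (where $\RitzLOD$ is the identity) to get the discrete first-order condition, feed it back to obtain $\langle E^{\prime\prime}(u)\ulod,\,w-\RitzLOD(w)\rangle=0$ on $\orthiu$, and then use coercivity of $\langle E^{\prime\prime}(u)\cdot,\cdot\rangle$ on $\orthiu$ to conclude $\ulod\in\Vlod\cap\orthiu$. The only differences are presentational: you make explicit two points the paper leaves implicit, namely the reduction of testing from $H^1(\Omega)$ to $\orthiu$ (via $E^{\prime\prime}(u)\,\ci u=0$ and symmetry) and the orthogonal-splitting argument behind the paper's phrase that $\ulod$ \emph{lies in the range of the projection}.
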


\begin{proof}
The implication ``$\Leftarrow$'' follows directly.  
For the reverse direction, assume that $\ulod \in \orthiu$ satisfies $E_h^{\prime}(\ulod)=0$.  
Then, for every $w_h \in \Vlod \cap \orthiu$ it holds that
\begin{align*}
\langle E^{\prime}(\ulod), w_h \rangle = 0.
\end{align*}
Together with $E_h^{\prime}(\ulod)=0$, this condition yields
\begin{align*}
\langle E^{\prime\prime}(u)\ulod,\, w - \RitzLOD(w) \rangle = 0
\qquad \text{for all } w \in \orthiu,
\end{align*}
where $\RitzLOD$ denotes the $\langle E^{\prime\prime}(u)\cdot,\cdot\rangle$-orthogonal projection onto $\Vlod\cap\orthiu$.  
By the coercivity of $E^{\prime\prime}(u)$ implied by the second-order sufficient condition, it follows that $\ulod$ lies in the range of this projection.  
Hence $\ulod \in \Vlod \cap \orthiu$, which completes the proof.
\end{proof}
Following the reasoning in \cite{CFH25}, our next objective is to identify a neighborhood of the minimizer $u$ in which there exists a unique discrete solution $\ulod \in \orthiu$ satisfying
\begin{align*}
E_h^{\prime}(\ulod) = 0.
\end{align*}
The size of this neighborhood will determine a quantitative, albeit pessimistic, estimate for the approximation error $\|u - \ulod\|_{H^1_{\kappa}}$, provided an additional mesh-resolution condition (to be specified later) is fulfilled.

To establish the existence of such a neighborhood, we recall several auxiliary properties of the discrete energy functional.  
All subsequent arguments and identities follow directly from the analysis in \cite{CFH25}, and are repeated here solely for completeness.

\begin{lemma}\label{lemma:properties_Eh_sec}
Under the assumptions of Lemma~\ref{lemma:characterization:Ehprime}, the following relation holds for all $v,w,z \in \orthiu$:
\begin{align*}
\langle E_h^{\prime\prime}(v) w , z \rangle
= \langle E^{\prime\prime}(v) w , \RitzLOD(z) \rangle
  + \langle E^{\prime\prime}(u) w , z - \RitzLOD(z) \rangle.
\end{align*}
In particular, for $v=u$ this simplifies to
\begin{align*}
\langle E_h^{\prime\prime}(u) w , z \rangle
= \langle E^{\prime\prime}(u) w , z \rangle,
\end{align*}
which implies the coercivity estimate
\begin{align*}
\langle E_h^{\prime\prime}(u) w , w \rangle
\;\ge\;
\rho(\kappa)^{-1}\,\|w\|_{H^1_{\kappa}}^2
\qquad \text{for all } w \in \orthiu.
\end{align*}
Moreover, the following identity holds:
\begin{align}
\label{id-u-rhu}
E^{\prime\prime}(u)\big|_{\orthiu}^{-1} E_h^{\prime}(u)\big|_{\orthiu}
\,=\, u - \RitzLOD(u),
\end{align}
where $E^{\prime\prime}(u)\big|_{\orthiu}^{-1}$ denotes the inverse of the linear operator 
$E^{\prime\prime}(u)\colon \orthiu \to (\orthiu)^{*}$,
that is, for any functional $F \in (\orthiu)^{*}$, 
the element $w_F := E^{\prime\prime}(u)\big|_{\orthiu}^{-1} F \in \orthiu$ is defined by
\begin{align*}
\langle E^{\prime\prime}(u) w_F , v \rangle = \langle F , v \rangle 
\qquad \text{for all } v \in \orthiu.
\end{align*}
\end{lemma}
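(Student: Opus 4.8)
The plan is to prove the three assertions in order, deriving each by differentiating the defining expression for $E_h^\prime$ and then specializing or rewriting. For the formula for $\langle E_h^{\prime\prime}(v)w,z\rangle$, I would start from the definition $\langle E_h^\prime(v),w\rangle = \langle E^\prime(v),\RitzLOD(w)\rangle + \langle E^{\prime\prime}(u)v, w - \RitzLOD(w)\rangle$ and take the directional (Gâteaux) derivative in the argument $v$ in direction $w$, evaluating the resulting bilinear form against $z$. The first term differentiates to $\langle E^{\prime\prime}(v)w, \RitzLOD(z)\rangle$, since $E^\prime$ is the only $v$-dependent factor and $\RitzLOD$ is a fixed linear projection applied to the test argument. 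The second term is \emph{linear} in $v$ (the operator $E^{\prime\prime}(u)$ is evaluated at the fixed point $u$, not at $v$), so its derivative in direction $w$ is simply $\langle E^{\prime\prime}(u)w, z - \RitzLOD(z)\rangle$. Adding the two pieces yields the claimed identity.

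For the specialization $v=u$, I would substitute and use the defining property \eqref{eq_def_Rh_rewritten} of the Ritz projection: by that definition, $\langle E^{\prime\prime}(u)w,\RitzLOD(z)\rangle = \langle E^{\prime\prime}(u)\RitzLOD(w),\RitzLOD(z)\rangle$, but more directly one observes that $\RitzLOD(z)\in\Vlod\cap\orthiu$ together with the Galerkin-type characterization lets the two terms recombine. Concretely, writing $z = \RitzLOD(z) + (z - \RitzLOD(z))$, the right-hand side becomes $\langle E^{\prime\prime}(u)w,\RitzLOD(z)\rangle + \langle E^{\prime\prime}(u)w, z-\RitzLOD(z)\rangle = \langle E^{\prime\prime}(u)w, z\rangle$, so the two correction terms telescope exactly because both operators coincide at $v=u$. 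The coercivity bound $\langle E_h^{\prime\prime}(u)w,w\rangle \ge \rho(\kappa)^{-1}\|w\|_{H^1_\kappa}^2$ is then an immediate consequence of this identity together with the coercivity of $E^{\prime\prime}(u)$ on $\orthiu$ from Lemma~\ref{lem:coercivity}.

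For the final identity \eqref{id-u-rhu}, I would unpack the definition of $E_h^\prime(u)$ at $v=u$: evaluated against a test function $v\in\orthiu$, $\langle E_h^\prime(u),v\rangle = \langle E^\prime(u),\RitzLOD(v)\rangle + \langle E^{\prime\prime}(u)u, v-\RitzLOD(v)\rangle$. Since $u$ is a minimizer, the first order condition $\langle E^\prime(u),\RitzLOD(v)\rangle = 0$ kills the first term, leaving $\langle E_h^\prime(u),v\rangle = \langle E^{\prime\prime}(u)u, v - \RitzLOD(v)\rangle$. Now by the definition of the Ritz projection, $\langle E^{\prime\prime}(u)\RitzLOD(u), v_h\rangle = \langle E^{\prime\prime}(u)u, v_h\rangle$ for $v_h\in\Vlod\cap\orthiu$, hence $\langle E^{\prime\prime}(u)u, v-\RitzLOD(v)\rangle = \langle E^{\prime\prime}(u)(u-\RitzLOD(u)), v\rangle$ after using that $\RitzLOD(v)\in\Vlod\cap\orthiu$ as a test function. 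Applying $E^{\prime\prime}(u)\big|_{\orthiu}^{-1}$ to the functional $E_h^\prime(u)\big|_{\orthiu}$ then gives precisely $u-\RitzLOD(u)$. The only subtle point, and the step I would be most careful about, is the bookkeeping in this last manipulation: one must verify that the projection identity for $\RitzLOD$ can be applied with $u$ (rather than its projection) in the first slot, which holds because $\RitzLOD$ is the $\langle E^{\prime\prime}(u)\cdot,\cdot\rangle$-orthogonal projection and so the defect $u-\RitzLOD(u)$ is $E^{\prime\prime}(u)$-orthogonal to $\Vlod\cap\orthiu$; everything else is routine substitution.
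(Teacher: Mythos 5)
Your proposal is correct and follows essentially the same route as the paper: the first identity by direct differentiation of the definition of $E_h^{\prime}$ (linear in $v$ in the second term), the $v=u$ case and coercivity by telescoping and Lemma~\ref{lem:coercivity}, and \eqref{id-u-rhu} by killing $\langle E^{\prime}(u),\RitzLOD(v)\rangle$ via the first-order condition and then invoking the Galerkin orthogonality of the defects $u-\RitzLOD(u)$ and $v-\RitzLOD(v)$ (together with the symmetry of $\langle E^{\prime\prime}(u)\cdot,\cdot\rangle$) before applying $E^{\prime\prime}(u)\big|_{\orthiu}^{-1}$, which is exactly the paper's Riesz-representation/uniqueness argument in slightly different words. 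The subtle point you flag at the end is precisely the step the paper handles by passing through $\langle E^{\prime\prime}(u)(u-\RitzLOD(u)),v-\RitzLOD(v)\rangle$, so nothing is missing.
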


\begin{proof}
All statements follow by straightforward algebraic manipulations. Only \eqref{id-u-rhu} requires verification.
Since $\langle E^{\prime\prime}(u)\cdot,\cdot\rangle$ is coercive on $\orthiu$, the inverse operator $E^{\prime\prime}(u)\big|_{\orthiu}^{-1}$ is well-defined.  
Let us denote $\phi_u := E^{\prime\prime}(u)\big|_{\orthiu}^{-1} E_h^{\prime}(u) \in \orthiu$, which satisfies,  
by construction,
\begin{align*}
\langle E^{\prime\prime}(u)\phi_u , v \rangle
= \langle E_h^{\prime}(u) , v \rangle
\qquad \text{for all } v \in \orthiu.
\end{align*}
Inserting the expression for $E_h^{\prime}(u)$ and using $E^{\prime}(u)=0$ yields
\begin{align*}
\langle E^{\prime\prime}(u)\phi_u , v \rangle
&= \langle E^{\prime\prime}(u)u , v - \RitzLOD(v) \rangle
 = \langle E^{\prime\prime}(u)(u - \RitzLOD(u)) , v - \RitzLOD(v) \rangle \\
&= \langle E^{\prime\prime}(u)(u - \RitzLOD(u)) , v \rangle.
\end{align*}
As this equality holds for all $v \in \orthiu$, and both 
$\phi_u$ and $u - \RitzLOD(u)$ belong to $\orthiu$, 
uniqueness of the Riesz representation with respect to $\langle E^{\prime\prime}(u)\cdot,\cdot\rangle$ implies $\phi_u = u - \RitzLOD(u)$, which establishes \eqref{id-u-rhu}.
\end{proof}
Continuity of $E_h^{\prime\prime}$ in a neighborhood of a minimizer $u$ is described by the following lemma.
\begin{lemma}
\label{lemma:continuity-secEh}
In the setting of Lemma \ref{lemma:characterization:Ehprime} and
Lemma \ref{lemma:properties_Eh_sec}, let $u\in H^1(\Omega)$ be a quasi-isolated minimizer.
If $\rho(\kappa)\,(h\kappa)^2 \lesssim 1$, then it holds for all $v,w,z \in \orthiu$ that
 \begin{eqnarray}
\label{eq_cont_Eh_L4}
  \langle (E^{\prime\prime}_h(u) -  E^{\prime\prime}_h(z))  w , v \rangle 
& \lesssim& \,  \left( \, \| u-z \|_{L^4} \, 
 +  \kappa^{d/4} \, \| u-z \|_{L^4}^2 \right) \| w \|_{L^4} \, \| v \|_{H^1_{\kappa}}
 \end{eqnarray}
and
 \begin{eqnarray}
\label{eq_cont_Eh_H1}
 \langle (E^{\prime\prime}_h(u) -  E^{\prime\prime}_h(z))  w , v \rangle 
& \lesssim& \,  \left( \, \kappa^{d/4} \, \| u-z \|_{L^4} \, 
 +  \kappa^{d/2} \, \| u-z \|_{L^4}^2 \right) \| w \|_{H^1_{\kappa}} \, \| v \|_{H^1_{\kappa}}.
 \end{eqnarray}
\end{lemma}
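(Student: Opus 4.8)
The plan is to reduce the whole statement to a pointwise algebraic identity for $E''(u)-E''(z)$ and then close both estimates with Hölder's inequality and the Gagliardo--Nirenberg inequality. The starting point is the representation in Lemma~\ref{lemma:properties_Eh_sec}. Specializing it to the base point $u$ gives $\langle E_h''(u)w,v\rangle = \langle E''(u)w,v\rangle$, whereas for the base point $z$ it reads $\langle E_h''(z)w,v\rangle = \langle E''(z)w,\RitzLOD(v)\rangle + \langle E''(u)w, v-\RitzLOD(v)\rangle$. Subtracting the two and combining the two $E''(u)$-contributions into $\langle E''(u)w,\RitzLOD(v)\rangle$ collapses everything to
\begin{align*}
\langle (E_h''(u)-E_h''(z))w,v\rangle = \langle (E''(u)-E''(z))w,\RitzLOD(v)\rangle.
\end{align*}
Thus the localized operators disappear and only the difference of the global second derivatives, tested against the single function $\RitzLOD(v)$, remains.

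Next I would exploit the explicit form of $E''$. Since its kinetic part $a(\cdot,\cdot)$ and the constant $-1$ in $(|{\cdot}|^2-1)$ are independent of the base point, they cancel in $E''(u)-E''(z)$, leaving only the cubic nonlinearity. Writing $e:=u-z$ and using $|u|^2-|z|^2 = 2\Re(z\overline e)+|e|^2$ together with the analogous expansion $\Re(u\overline w)u-\Re(z\overline w)z = \Re(z\overline w)e+\Re(e\overline w)z+\Re(e\overline w)e$, every surviving term carries at least one factor of $e$, one factor of $w$, one factor of $\RitzLOD(v)$, and (for the terms linear in $e$) one bounded factor $u$ or $z$ with $|u|,|z|\le 1$ by Lemma~\ref{lem:existence}. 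Pointwise the integrand is therefore controlled by $|e|\,|w|\,|\RitzLOD(v)|$ (linear part) plus $|e|^2\,|w|\,|\RitzLOD(v)|$ (quadratic part). Hölder's inequality with exponents $(4,4,2)$ on the linear terms and with $|e|^2\in L^2$, $w,\RitzLOD(v)\in L^4$ on the quadratic terms yields
\begin{align*}
|\langle (E''(u)-E''(z))w,\RitzLOD(v)\rangle| \lesssim \|e\|_{L^4}\|w\|_{L^4}\|\RitzLOD(v)\|_{L^2} + \|e\|_{L^4}^2\|w\|_{L^4}\|\RitzLOD(v)\|_{L^4}.
\end{align*}

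To convert this into \eqref{eq_cont_Eh_L4} I need norm stability of $\RitzLOD$. Using Lemma~\ref{lemma:estimate-ritz-projection} and the resolution condition $\rho(\kappa)(h\kappa)^2\lesssim 1$ (which, since $\rho(\kappa)\gtrsim 1$, also forces $h\kappa\lesssim 1$), the error $\|v-\RitzLOD(v)\|_{H^1_\kappa}\lesssim\|v-\Rlod(v)\|_{H^1_\kappa}$ is controlled through the $H^1_\kappa$-stability of $\Rlod$, giving $\|\RitzLOD(v)\|_{H^1_\kappa}\lesssim\|v\|_{H^1_\kappa}$; hence $\|\RitzLOD(v)\|_{L^2}\le\|\RitzLOD(v)\|_{H^1_\kappa}\lesssim\|v\|_{H^1_\kappa}$ and, via the weighted Gagliardo--Nirenberg inequality $\|\cdot\|_{L^4}\lesssim\kappa^{d/4}\|\cdot\|_{H^1_\kappa}$, $\|\RitzLOD(v)\|_{L^4}\lesssim\kappa^{d/4}\|v\|_{H^1_\kappa}$. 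Substituting these bounds and recalling $e=u-z$ produces exactly $(\|u-z\|_{L^4}+\kappa^{d/4}\|u-z\|_{L^4}^2)\|w\|_{L^4}\|v\|_{H^1_\kappa}$, which is \eqref{eq_cont_Eh_L4}. Finally, \eqref{eq_cont_Eh_H1} follows from \eqref{eq_cont_Eh_L4} by applying the same Gagliardo--Nirenberg inequality to the factor $\|w\|_{L^4}\lesssim\kappa^{d/4}\|w\|_{H^1_\kappa}$, which turns the prefactor into $\kappa^{d/4}\|u-z\|_{L^4}+\kappa^{d/2}\|u-z\|_{L^4}^2$.

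I expect the only genuinely delicate step to be the norm-stability of $\RitzLOD$: one must check that the resolution condition $\rho(\kappa)(h\kappa)^2\lesssim 1$ renders the constants in the $L^2$-, $L^4$- and $H^1_\kappa$-stability estimates $\kappa$-independent, since the bounds of Lemma~\ref{lemma:estimate-ritz-projection} couple the $L^2$- and $H^1_\kappa$-errors through factors of $\rho(\kappa)$ and $h\kappa$. Everything else is bookkeeping: the cancellation of the quadratic form, the systematic extraction of a factor $e=u-z$, and the choice of Hölder exponents that route the $\kappa$-weights correctly through Gagliardo--Nirenberg.
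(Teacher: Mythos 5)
Your overall strategy is correct and is essentially the paper's own (the paper only sketches the proof and defers the details to \cite[Lemma 5.8]{CFH25}): the reduction $\langle (E_h''(u)-E_h''(z))w,v\rangle = \langle (E''(u)-E''(z))w,\RitzLOD(v)\rangle$ via Lemma~\ref{lemma:properties_Eh_sec} is valid and is the key algebraic step; the cancellation of the kinetic part, the H\"older pairings $(4,4,2)$ and $(2,4,4)$, the $H^1_\kappa$-stability of $\RitzLOD$ under $\rho(\kappa)(h\kappa)^2\lesssim 1$ (via Lemma~\ref{lemma:estimate-ritz-projection} and the stability of $\Rlod$), and the final Gagliardo--Nirenberg step from \eqref{eq_cont_Eh_L4} to \eqref{eq_cont_Eh_H1} are all exactly as intended.

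There is, however, one genuine flaw: you expand the cubic nonlinearity around $z$ and then invoke Lemma~\ref{lem:existence} to claim $|z|\le 1$ a.e. That bound is not available. Lemma~\ref{lem:existence} gives $|u|\le 1$ only for \emph{critical points} of $E$, whereas in the statement of the lemma $z$ is an \emph{arbitrary} element of $\orthiu$ (indeed, in the fixed-point argument of Theorem~\ref{theorem:existence-of-local-discrete-minimizer}, where this continuity result is applied, $z$ ranges over the whole neighborhood $\Ku$ and over convex combinations $sw+(1-s)z$, none of which are critical points). Consequently the terms $\Re(z\overline{w})e$, $\Re(e\overline{w})z$ and $2\Re(z\overline{e})w$ cannot be bounded the way you do. The repair is to Taylor-expand around $u$ instead of $z$ --- this is precisely what the paper means by ``Taylor expansion of $E_h''$'': writing $e:=u-z$, one has $|u|^2-|z|^2 = 2\Re(u\overline{e})-|e|^2$ and $\Re(u\overline{w})u-\Re(z\overline{w})z = \Re(u\overline{w})e+\Re(e\overline{w})u-\Re(e\overline{w})e$, so every coefficient that is not a power of $e$ is $u$ itself, for which $|u|\le 1$ does hold. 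The resulting pointwise bound $|e|\,|w|\,|\RitzLOD(v)| + |e|^2|w|\,|\RitzLOD(v)|$ is identical to the one you state, and the remainder of your argument then goes through verbatim.
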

 
 \begin{proof}
 The estimates follow by Taylor expansion of $E^{\prime\prime}_h$ and the $H^1_{\kappa}$-stability of $\RitzLOD$ under the resolution condition $\rho(\kappa)\,(h\kappa)^2 \lesssim 1$ (cf. Lemma \ref{lemma:estimate-ritz-projection}). At the end, the Gagliardo--Nirenberg inequality $\| v \|_{L^4} \lesssim \kappa^{d/4} \| v \|_{H^1_{\kappa}}$ is applied. For details we refer to \cite[Lemma 5.8]{CFH25}.
\end{proof}
We now show that $E_h^{\prime}$ has a unique zero near any (quasi-isolated) minimizer $u$.
\begin{theorem}
\label{theorem:existence-of-local-discrete-minimizer}
Assume~\ref{A0}--\ref{A2}. 
Then, for each quasi-isolated minimizer $u \in H^1(\Omega)$ of $E$, 
there exist constants $\tau_{\mathrm{up}} \gtrsim 1$ and $c_{\mathrm{up}} \gtrsim 1$ 
(both independent of $\kappa$ and $h$) such that for all 
$\tau \in (0,\tau_{\mathrm{up}}]$, the following holds: If
$$
\rho(\kappa)\,(h\kappa)^2 \,\le\, c_{\mathrm{up}}
\qquad\text{and}\qquad
\kappa^{d/2}\rho(\kappa)\,(h\kappa)^3 \,\le\, c_{\mathrm{up}}\,\tau,
$$
then there exists a unique $\ulod \in \Vlod \cap \orthiu$ satisfying
\begin{align*}
\| \ulod - u \|_{H^1_{\kappa}} \,\,\le\,\, \tau \, \rho(\kappa)^{-1}
\qquad
\mbox{and}
\qquad
\| \ulod - u \|_{L^4(\Omega)} \,\, \le \, \tau \, \rho(\kappa)^{-1} \kappa^{-d/4}
\end{align*}
such that 
\[
\langle E^{\prime}(\ulod), v_h \rangle = 0 
\qquad\text{for all } v_h \in \Vlod.
\]
Moreover,
\begin{align}
\label{secEuh-coercive}
\langle E^{\prime\prime}(\ulod)v_h , v_h \rangle 
\,\gtrsim\, \rho(\kappa)^{-1}\| v_h \|_{H^1_{\kappa}}^2
\qquad\text{for all } v_h \in \Vlod \cap \orthiu,
\end{align}
i.e., $\ulod$ is a (local) minimizer of $E$ in $\Vlod$.
\end{theorem}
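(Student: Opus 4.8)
The plan is to recast the discrete first-order condition as a fixed-point problem and solve it by Banach's theorem on a small ball around $u$. By Lemma~\ref{lemma:characterization:Ehprime} it is enough to find a zero of $E_h^{\prime}$ in $\orthiu$, so I would introduce the Newton-type map
\[
T(v) := v - E^{\prime\prime}(u)\big|_{\orthiu}^{-1} E_h^{\prime}(v), \qquad v \in \orthiu,
\]
which is well defined because $E^{\prime\prime}(u)|_{\orthiu}$ is boundedly invertible by Lemma~\ref{lem:coercivity}, its inverse contributing a factor $\rho(\kappa)$ whenever it is estimated against an $H^1_\kappa$-dual functional. A fixed point $\ulod=T(\ulod)$ satisfies $E_h^{\prime}(\ulod)=0$ and hence, by Lemma~\ref{lemma:characterization:Ehprime}, lies in $\Vlod\cap\orthiu$ and solves $\langle E^{\prime}(\ulod),v_h\rangle=0$ for all $v_h\in\Vlod$.

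The residual at the centre is governed by~\eqref{id-u-rhu}: $T(u)-u=\RitzLOD(u)-u$, so Lemma~\ref{lemma:estimate-ritz-projection} and Corollary~\ref{cor-quasi-best-approx} give $\|T(u)-u\|_{H^1_\kappa}\lesssim(h\kappa)^3$, and the weighted Gagliardo--Nirenberg inequality gives $\|T(u)-u\|_{L^4}\lesssim\kappa^{d/4}(h\kappa)^3$. For the Lipschitz property I would use $E^{\prime\prime}(u)|_{\orthiu}^{-1}E_h^{\prime\prime}(u)=\mathrm{Id}$ on $\orthiu$ (Lemma~\ref{lemma:properties_Eh_sec}) together with a Taylor expansion to obtain
\[
T(v)-T(w)=E^{\prime\prime}(u)\big|_{\orthiu}^{-1}\int_0^1\big(E_h^{\prime\prime}(u)-E_h^{\prime\prime}(w+s(v-w))\big)(v-w)\,\mathrm{d}s.
\]
Inserting the continuity estimates~\eqref{eq_cont_Eh_L4}--\eqref{eq_cont_Eh_H1} (valid under $\rho(\kappa)(h\kappa)^2\lesssim1$) and the factor $\rho(\kappa)$ from the inverse, on the region where $\|u-z\|_{L^4}\le\tau\rho(\kappa)^{-1}\kappa^{-d/4}$ the Lipschitz constant is of order $\tau$ in both the $H^1_\kappa$- and the $L^4$-metric; choosing $\tau\le\tau_{\mathrm{up}}$ small renders $T$ a contraction with factor $\le\tfrac12$.

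The self-mapping step forces the stronger resolution condition. On the closed ball $B=\{v\in\orthiu:\|v-u\|_{H^1_\kappa}\le\tau\rho(\kappa)^{-1},\ \kappa^{d/4}\|v-u\|_{L^4}\le\tau\rho(\kappa)^{-1}\}$, combining the contraction factor $\tfrac12$ with the residual bounds shows $T(B)\subset B$ provided $\rho(\kappa)(h\kappa)^3\lesssim\tau$ (to close the $H^1_\kappa$-ball) and $\kappa^{d/2}\rho(\kappa)(h\kappa)^3\lesssim\tau$ (to close the $L^4$-ball); the latter is exactly the second hypothesis, so with $c_{\mathrm{up}}$ chosen accordingly both hold. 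Banach's theorem then yields a unique $\ulod\in B$, which by Lemma~\ref{lemma:characterization:Ehprime} has the asserted first-order property and the stated $H^1_\kappa$- and $L^4$-bounds. For the second-order estimate~\eqref{secEuh-coercive} I would start from $\langle E^{\prime\prime}(u)v_h,v_h\rangle\ge\rho(\kappa)^{-1}\|v_h\|_{H^1_\kappa}^2$ on $\orthiu$ (Lemma~\ref{lem:coercivity}) and perturb by $\langle(E^{\prime\prime}(u)-E^{\prime\prime}(\ulod))v_h,v_h\rangle$, which the continuity of $E^{\prime\prime}$ bounds by $(\kappa^{d/4}\|u-\ulod\|_{L^4}+\kappa^{d/2}\|u-\ulod\|_{L^4}^2)\|v_h\|_{H^1_\kappa}^2\lesssim\tau\rho(\kappa)^{-1}\|v_h\|_{H^1_\kappa}^2$; for $\tau$ small this leaves a coercivity constant $\gtrsim\rho(\kappa)^{-1}$.

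The main obstacle is the two-norm bookkeeping. The contraction is natural in $H^1_\kappa$, but reproducing the sharp $L^4$-radius $\tau\rho(\kappa)^{-1}\kappa^{-d/4}$ — rather than the $\kappa^{d/2}$-lossy bound that a crude Gagliardo--Nirenberg estimate of $\|\ulod-u\|_{L^4}$ from the $H^1_\kappa$-radius alone would give — is what simultaneously keeps the Lipschitz constant at order $\tau$ and isolates the factor $\kappa^{d/2}$ in the self-mapping condition. One must therefore run the fixed-point iteration simultaneously in both norms, and, since $\|\cdot\|_{L^4}$ and $\|\cdot\|_{H^1_\kappa}$ are inequivalent, argue completeness of $B$ via a combined metric (with the $H^1_\kappa$-bound recovered through weak compactness, as in \cite{CFH25}).
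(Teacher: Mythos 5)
Your construction is essentially the paper's own proof: the same Newton-type map $F(z)=z-\big(E^{\prime\prime}(u)\vert_{\orthiu}\big)^{-1}E_h^{\prime}(z)$, the same two-norm ball $\{z\in\orthiu:\|u-z\|_{H^1_\kappa}\le\tau\rho(\kappa)^{-1},\ \|u-z\|_{L^4}\le\tau\rho(\kappa)^{-1}\kappa^{-d/4}\}$, the same ingredients (identity \eqref{id-u-rhu} for the residual, Lemma~\ref{lemma:continuity-secEh} for the Lipschitz bound, Gagliardo--Nirenberg to isolate the $\kappa^{d/2}$ factor in the invariance step), and the same perturbation argument for the coercivity \eqref{secEuh-coercive}. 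Two remarks, one of which is a genuine gap.

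The gap: you claim that Lemma~\ref{lemma:characterization:Ehprime} delivers $\langle E^{\prime}(\ulod),v_h\rangle=0$ for \emph{all} $v_h\in\Vlod$. It does not -- the lemma characterizes zeros of $E_h^{\prime}$ as elements of $\Vlod\cap\orthiu$ satisfying the first-order condition only against test functions in $\Vlod\cap\orthiu$, which is a space of real codimension one in $\Vlod$. The theorem's conclusions that the first-order condition holds on all of $\Vlod$ and that $\ulod$ is a local minimizer of $E$ in $\Vlod$ (not merely in the constrained space) require an additional argument, which the paper supplies as a separate final step: by gauge invariance, for every $v_h\in\Vlod$ there is $\omega$ with $e^{\ci\omega}v_h\in\orthiu$ and $E(e^{\ci\omega}v_h)=E(v_h)$ (this uses that $\Vlod$ is invariant under multiplication by $e^{\ci\omega}$, which in turn follows from the complex-linearity of $\Ph$ and $\Cor$); hence the phase constraint does not lower the attainable energy, $\ulod$ -- a strict local minimizer in $\Vlod\cap\orthiu$ by the coercivity you established -- is also a local minimizer in $\Vlod$, and the full first-order condition follows. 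Without this step your proof proves a strictly weaker statement than the theorem asserts.

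A minor point on your last paragraph: the completeness issue is simpler than you make it. The paper runs Banach's theorem in the $H^1_\kappa$-metric alone; the set $\Ku$ is closed in $\orthiu$ because the continuous embedding $H^1(\Omega)\hookrightarrow L^4(\Omega)$ ($d\le 3$) makes the $L^4$-constraint stable under $H^1_\kappa$-convergence, and the $L^4$-radius is then maintained purely through the invariance step $F(\Ku)\subset\Ku$, not through a contraction in a combined metric. Your combined-metric variant can be made to work (your $L^4$-contraction estimate is justifiable from \eqref{eq_cont_Eh_L4} plus Gagliardo--Nirenberg), but the appeal to weak compactness is unnecessary.
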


\begin{proof}
The proof adopts the arguments of \cite[Theorem 5.9]{CFH25}. For a given parameter $0 < \tau \le 1$ (with an upper bound to be specified later), set
\begin{align*}
\delta_{\tau,\kappa} := \tau\,\rho(\kappa)^{-1},
\end{align*}
and define the neighborhood of $u$ by
\begin{align*}
\Ku :=
\bigl\{ z \in \orthiu \,\big|\,
\|u - z\|_{H^1_{\kappa}} \le \delta_{\tau,\kappa}
\text{ and }
\|u - z\|_{L^4(\Omega)} \le \kappa^{-d/4}\,\delta_{\tau,\kappa}
\bigr\}.
\end{align*}
Since $H^1(\Omega)$ embeds continuously into $L^4(\Omega)$ for $d \le 3$, the set $\Ku$ is closed in~$\orthiu$.

We introduce the fixed-point operator $F : \Ku \to \orthiu$ defined by
\begin{align*}
    F(z) := z - \big(E^{\prime\prime}(u)\vert_{\orthiu}\big)^{-1} E_h^{\prime}(z).
\end{align*}
For sufficiently small $\tau$, we aim to prove that $F$ admits a unique fixed point $\ulod \in \Ku$ by applying the Banach fixed-point theorem.
To this end, we establish a resolution condition, depending on $h$ and $\kappa$, that guarantees $F$ is a contraction, i.e.,
\begin{align*}
    \| F(w) - F(z) \|_{H^1_{\kappa}} \le L \| w - z \|_{H^1_{\kappa}}
    \quad \text{for all } w,z \in \Ku \text{ and some } L<1,
\end{align*}
and that $F(\Ku) \subset \Ku$.

\medskip
\noindent\textbf{Step 1. Contraction property.}
For $w,z \in \Ku$, define $\gamma(s) := F(sw + (1-s)z)$. Then
\begin{align*}
F(w) - F(z) = \int_0^1 \gamma'(s)\,\mathrm{d}s.
\end{align*}
Using $F(z) = E_h^{\prime\prime}(u)\vert_{\orthiu}^{-1} \bigl(E_h^{\prime\prime}(u)z - E_h^{\prime}(z)\bigr)$, we find
\begin{align*}
\gamma'(s)
= E_h^{\prime\prime}(u)\vert_{\orthiu}^{-1}
\bigl(
E_h^{\prime\prime}(u)(w-z)
- E_h^{\prime\prime}(sw+(1-s)z)(w-z)
\bigr).
\end{align*}
As $\| E_h^{\prime\prime}(u)\vert_{\orthiu}^{-1} \mathcal{F}\|_{H^1_{\kappa}} \le \rho(\kappa) \sup\limits_{v \in \orthiu} \frac{\langle \mathcal{F}, v\rangle}{\|v\|_{H^1_{\kappa}}}$ for any bounded linear functional $\mathcal{F}$ on $\orthiu$, we conclude with Lemma~\ref{lemma:continuity-secEh}, that
\begin{eqnarray*}
\lefteqn{ \|F(w) - F(z)\|_{H^1_{\kappa}}
\,\,\,\le\,\,\,
\rho(\kappa)
\int_0^1
\sup_{v \in \orthiu}
\frac{\langle
(E_h^{\prime\prime}(u) - E_h^{\prime\prime}(sw+(1-s)z))(w-z),
v\rangle}{\|v\|_{H^1_{\kappa}}}\,\mathrm{d}s }\\
&\lesssim&
\rho(\kappa)
\sup_{s \in [0,1]}
\Bigl(
\kappa^{d/4}\|u - (sw+(1-s)z)\|_{L^4}
+ \kappa^{d/2}\|u - (sw+(1-s)z)\|_{L^4}^2
\Bigr)
\|w-z\|_{H^1_{\kappa}}.
\end{eqnarray*}
Since $w,z \in \Ku$, we have
$\|u-z\|_{L^4} \le \kappa^{-d/4}\delta_{\tau,\kappa}$ and
$\|w-z\|_{L^4} \le 2\,\kappa^{-d/4}\delta_{\tau,\kappa}$.
Thus,
\begin{align*}
\|F(w) - F(z)\|_{H^1_{\kappa}}
\lesssim
\rho(\kappa)\,\delta_{\tau,\kappa}\,\|w-z\|_{H^1_{\kappa}}
\lesssim
\tau \|w-z\|_{H^1_{\kappa}}.
\end{align*}
Hence, for sufficiently small $\tau$ (independent of $\kappa$),
\begin{align*}
\|F(w) - F(z)\|_{H^1_{\kappa}}
\le \tfrac12 \|w-z\|_{H^1_{\kappa}},
\end{align*}
so $F$ is a contraction on $\Ku$.

\medskip
\noindent\textbf{Step 2. Invariance of $\Ku$.}
We now show that $F(\Ku) \subset \Ku$.
For any $z \in \orthiu$,
\begin{align*}
F(u) - F(z)
=
\int_0^1
E_h^{\prime\prime}(u)\vert_{\orthiu}^{-1}
\bigl(
E_h^{\prime\prime}(u)(u-z)
- E_h^{\prime\prime}(su+(1-s)z)(u-z)
\bigr)\,\mathrm{d}s.
\end{align*}
Since \eqref{id-u-rhu} implies $u - F(u) = E^{\prime\prime}(u)\vert_{\orthiu}^{-1}E_h^{\prime}(u) = u - \RitzLOD(u)$,
we have
\begin{align*}
u - F(z)
&= (u - F(u)) + (F(u) - F(z))\\
&= u - \RitzLOD(u)
+ E^{\prime\prime}(u)\vert_{\orthiu}^{-1}
\int_0^1
\bigl(E_h^{\prime\prime}(u) - E_h^{\prime\prime}(su+(1-s)z)\bigr)(u-z)\,\mathrm{d}s.
\end{align*}
By Corollary~\ref{cor-quasi-best-approx} and Lemma~\ref{lemma:estimate-ritz-projection} (for $\rho(\kappa)\,(h\kappa)^2 \lesssim 1$), we have
\begin{align*}
\|u - \RitzLOD(u)\|_{H^1_{\kappa}}
\lesssim (h\kappa)^3.
\end{align*}
Applying again Lemma~\ref{lemma:continuity-secEh}, we find
\begin{eqnarray*}
\|u - F(z)\|_{H^1_{\kappa}}
&\lesssim& 
(h\kappa)^3
+ \rho(\kappa)\bigl(\kappa^{d/4}\|u-z\|_{L^4}
+ \kappa^{d/2}\|u-z\|_{L^4}^2\bigr)\|u-z\|_{H^1_{\kappa}} \\
&\lesssim&
(h\kappa)^3 + \rho(\kappa)\,\delta_{\tau,\kappa}^2
\,\,\,\lesssim\,\,\,
(h\kappa)^3 + \tau\,\delta_{\tau,\kappa}.
\end{eqnarray*}
If $h$ is chosen such that $(h\kappa)^3 \lesssim \tau\,\delta_{\tau,\kappa} \le \tau^2\rho(\kappa)^{-1}$,
then for sufficiently small $\tau$,
\begin{align*}
\|u - F(z)\|_{H^1_{\kappa}} \le \delta_{\tau,\kappa}.
\end{align*}

For the $L^4$-estimate, using the Gagliardo--Nirenberg inequality and the above bound, we have
\begin{eqnarray*}
\lefteqn{\|u - F(z)\|_{L^4}
\,\,\,\le\,\,\,
\|u - \RitzLOD(u)\|_{L^4}
+ \|
E^{\prime\prime}(u)\vert_{\orthiu}^{-1}
\int_0^1
\bigl(E_h^{\prime\prime}(u) - E_h^{\prime\prime}(su+(1-s)z)\bigr)(u-z)\,\mathrm{d}s
\|_{L^4} }\\
&\lesssim&
\kappa^{d/4}\|u - \RitzLOD(u)\|_{H^1_{\kappa}}
+ \kappa^{d/4}\rho(\kappa)
\int_0^1
\sup_{v \in \orthiu}
\frac{\langle (E_h^{\prime\prime}(u) - E_h^{\prime\prime}(su+(1-s)z))(u-z),v\rangle}{\|v\|_{H^1_{\kappa}}}\,\mathrm{d}s\\
&\lesssim&
(h\kappa)^3\kappa^{d/4} 
+ \rho(\kappa)\, \kappa^{d/4} \, \bigl(\|u-z\|_{L^4}^2 + \kappa^{d/4}\|u-z\|_{L^4}^3\bigr)
\,\,\,\lesssim\,\,\,
(h\kappa)^3\kappa^{d/4} + \tau\,\kappa^{-d/4}\,\delta_{\tau,\kappa}.
\end{eqnarray*}
If $h$ satisfies
\begin{align*}
(h\kappa)^3\kappa^{d/4}  \lesssim \tau\,\kappa^{-d/4}\delta_{\tau,\kappa}
\le \tau^2\,\kappa^{-d/4}\rho(\kappa)^{-1},
\quad \text{i.e.} \quad
\kappa^{d/2}\rho(\kappa)\,(h\kappa)^3 \lesssim \tau^2,
\end{align*}
for some sufficiently small $\tau$ (independent of $\kappa$), then $\|u - F(z)\|_{L^4} \le \kappa^{-d/4}\delta_{\tau,\kappa}$. Hence $F(\Ku) \subset \Ku$.

\medskip
\noindent\textbf{Step 3. Existence and properties of the discrete critical point.}
Since $F$ is a contraction on $\Ku$ and $F(\Ku) \subset \Ku$,
there exists a unique fixed point $\ulod \in \Ku \subset \orthiu$ such that $E_h^{\prime}(\ulod)=0$.
By Lemma~\ref{lemma:characterization:Ehprime}, we have $\ulod \in \Vlod \cap \orthiu$ and
\begin{align*}
\langle E^{\prime}(\ulod), v_h \rangle = 0
\quad \mbox{for all }\, v_h \in \Vlod \cap \orthiu.
\end{align*}
Next, we verify the coercivity of $\langle E^{\prime\prime}(\ulod)\cdot,\cdot\rangle$.
Using Lemma~\ref{lemma:continuity-secEh},
\begin{align*}
|\langle (E_h^{\prime\prime}(u) - E_h^{\prime\prime}(\ulod))v_h, v_h \rangle|
\lesssim
(\kappa^{d/4}\|u - \ulod\|_{L^4}
+ \kappa^{d/2}\|u - \ulod\|_{L^4}^2)
\|v_h\|_{H^1_{\kappa}}^2
\lesssim
\tau\,\rho(\kappa)^{-1}\|v_h\|_{H^1_{\kappa}}^2.
\end{align*}
Since
$\langle E_h^{\prime\prime}(w)v_h,v_h\rangle
= \langle E^{\prime\prime}(w)v_h,v_h\rangle$
for all $v_h \in \Vlod \cap \orthiu$ and $w \in \orthiu$, we obtain
\begin{align*}
\langle E^{\prime\prime}(\ulod)v_h,v_h\rangle
\ge
\rho(\kappa)^{-1}(1 - C\tau)\|v_h\|_{H^1_{\kappa}}^2
\gtrsim
\rho(\kappa)^{-1}\|v_h\|_{H^1_{\kappa}}^2.
\end{align*}

\medskip
\noindent\textbf{Step 4. Removing the phase constraint.}
We already know that $E^{\prime}(\ulod) = 0$ on $\Vlod \cap \orthiu$ and
that $E^{\prime\prime}(\ulod)$ is coercive there.
Hence $\ulod$ is the unique minimizer of $E$ in $\Vlod \cap \orthiu$, satisfying
\begin{align*}
\|u - \ulod\|_{H^1_{\kappa}} \le \delta_{\tau,\kappa},
\qquad
\|u - \ulod\|_{L^4} \le \kappa^{-d/4}\delta_{\tau,\kappa}.
\end{align*}
The phase constraint $v_h \in \orthiu$ does not affect the minimal energy value:
for any $v_h \in \Vlod$, there exists $\omega \in [-\pi,\pi)$ such that
$e^{i\omega}v_h \in \orthiu$ and $E(e^{\ci \omega}v_h) = E(v_h)$.
Therefore, $\ulod$ is also a local minimizer of $E$ in $\Vlod$,
and the first-order optimality condition implies
\begin{align*}
\langle E^{\prime}(\ulod), v_h \rangle = 0
\quad \mbox{for all } v_h \in \Vlod.
\end{align*}

\end{proof}

\subsubsection{Proof of $H^1_\kappa$ and $L^2$ error estimates}

\label{sec:proof_main_result}

With the existence result in Theorem \ref{theorem:existence-of-local-discrete-minimizer} and the corresponding abstract error bounds, we are now ready to establish our main result.
\begin{proof}[Proof of Theorem \ref{theorem:main-result}]
For given $0 < \tau \le \tau_{\mathrm{up}}$ (with $\tau_{\mathrm{up}}$ small enough, independent of $\kappa$ and $h$) and under the resolution conditions $\rho(\kappa)\,(h\kappa)^2 \,\le\, c_{\mathrm{up}}$ and $\kappa^{d/2}\rho(\kappa)\,(h\kappa)^3 \,\le\, c_{\mathrm{up}}\,\tau$, Theorem \ref{theorem:existence-of-local-discrete-minimizer} ensures existence of a unique minimizer  $\ulod \in \Vlod \cap \orthiu$ with $E^{\prime}(\ulod) \vert_{\Vlod}=0$ and 
\begin{align}
\label{proof:main-result-step-1-eqn}
\| u - \ulod  \|_{L^4} \,\,\le\,\,  \tau \, \rho(\kappa)^{-1} \kappa^{-d/4}
\qquad
\mbox{and}
\qquad
\| \ulod - u \|_{H^1_{\kappa}} \,\,\le\,\, \tau \, \rho(\kappa)^{-1}.
\end{align}
For this discrete minimizer $\ulod$, we use Corollary~\ref{cor-quasi-best-approx}, Lemma \ref{lemma:estimate-ritz-projection} and Lemma \ref{lemma:H1-est-Rh} to obtain
\begin{eqnarray*}
\lefteqn{ \| \ulod - u \|_{H^1_{\kappa}} \,\,\,\le\,\,\, \| u - \RitzLOD(u) \|_{H^1_{\kappa}} + \| \ulod - \RitzLOD(u) \|_{H^1_{\kappa}} } \\
&\lesssim& (h \kappa)^3  + \| \ulod - \RitzLOD(u) \|_{H^1_{\kappa}} \\
&\lesssim& (h\kappa)^3 + \rho(\kappa) \left(  \kappa^{d/4} \| u-\ulod\|_{L^4} + \kappa^{d/2} \| u-\ulod\|_{L^4}^2 \right) \| u-\ulod\|_{H^1_{\kappa}} \\
&\overset{\eqref{proof:main-result-step-1-eqn}}{\lesssim}&  (h\kappa)^3 + \tau \, \| u-\ulod\|_{H^1_{\kappa}}
\end{eqnarray*}
Hence, for sufficiently small $\tau\lesssim 1$, we can absorb $\tau \, \| u-\ulod\|_{H^1_{\kappa}}$ into the left hand side to obtain the desired $H^1_{\kappa}$-estimate $\| \ulod - u \|_{H^1_{\kappa}} \lesssim (h\kappa)^3$.

For the $L^2$-error estimate, we start with Lemma \ref{lemma:estimate-ritz-projection} to obtain
\begin{eqnarray*}
\| u - \RitzLOD(u) \|_{L^2} 
&\lesssim & \big( h\kappa \,+\, \rho(\kappa)\,(h\kappa)^2 \big) 
  \| u - \RitzLOD(u) \|_{H^1_{\kappa}}
   \,\,\,\lesssim\,\,\,  (h\kappa)^4 \,+\, \rho(\kappa)\,(h\kappa)^5.
\end{eqnarray*}
With the previous estimate for $\| \ulod - \RitzLOD(u) \|_{H^1_{\kappa}}$, this yields
\begin{eqnarray*}
\lefteqn{  \| \ulod - u \|_{L^2} 
\,\,\,\lesssim\,\,\,   \| u - \RitzLOD(u) \|_{L^2}  +   \| \ulod - \RitzLOD(u) \|_{L^2} } \\
&\lesssim&   (h\kappa)^4 \,+\, \rho(\kappa)\,(h\kappa)^5  +  \rho(\kappa) \left(  \kappa^{d/4} \| u-\ulod\|_{L^4} + \kappa^{d/2} \| u-\ulod\|_{L^4}^2 \right) \| u-\ulod\|_{H^1_{\kappa}}  \\
&\lesssim&  (h\kappa)^4 \,+\, \rho(\kappa)\,(h\kappa)^5  +  \rho(\kappa) \left(  \kappa^{d/2} + \kappa^{d/2+d/4} \| u-\ulod\|_{L^4} \right) \| u-\ulod\|_{H^1_{\kappa}}^2  \\
&\overset{\eqref{proof:main-result-step-1-eqn}}{\lesssim}&
(h\kappa)^4 \,+\, \rho(\kappa)\,(h\kappa)^5  +  \rho(\kappa) \kappa^{d/2} \| u-\ulod\|_{H^1_{\kappa}}^2 \\
&\lesssim& (h\kappa)^4 \,+\, \rho(\kappa)\,(h\kappa)^5  +  \rho(\kappa) \,\kappa^{d/2} \,(h\kappa)^6. 
\end{eqnarray*}
\end{proof}

\section{Computation of GL minimizers}
\label{sec:numerics}

Our main results from the previous section provide only estimates on the approximability of exact Ginzburg–Landau minimizers in LOD multiscale spaces. The remaining question is how to actually compute such a discrete minimizer in a numerical simulation. For this purpose, we introduce an iterative scheme based on a nonlinear conjugate Sobolev gradient descent (CSG method) which can be found in \cite{CFH25}.

\subsection{A nonlinear conjugate Sobolev gradient descent method}

The CSG method takes the form of an iteration for $n = 0,1,2,\dots$,
\begin{align*}
  u^{n+1} = u^n + \tau_n d^n,
\end{align*}
where $u^n \in H^1(\Omega)$ is the current iterate, $\tau_n > 0$ a suitably chosen step size, and $d^n \in H^1(\Omega)$ a descent direction. A very simple choice for the descent direction would be the negative of the gradient $\nabla E(u^n)$, leading to a classical discretization of the $L^2$-gradient flow. A more sophisticated choice, which we use here, involves the Sobolev gradient $\nabla_X E(u^n)$ of the energy functional with respect to a Hilbert space $X \subset H^1(\Omega)$ equipped with an inner product $(\cdot,\cdot)_X$. In our application we choose $X = \Vlod$, and the inner product $(\cdot,\cdot)_X$ will be defined below. However, the derivation of the method holds for general spaces $X \subset H^1(\Omega)$ as long as $(X, (\cdot,\cdot)_X)$ forms a Hilbert space.

The Sobolev gradient $\nabla_X E(z)$ at an arbitrary point $z \in X$ is formally defined by
\begin{align*}
  (\nabla_X E(z), w)_X = \langle E'(z), w \rangle \quad \text{for all } w \in X.
\end{align*}
In particular, it represents the direction of steepest descent with respect to the metric $(\cdot,\cdot)_X$. The metric $(\cdot,\cdot)_X$ is defined adapted to the point $z \in X$: for $v,w \in X$ we set
\begin{align*}
  (v,w)_{X,z} := a(v,w) + \big( (|z|^2 + |\bfA|^2) v,w \big)_{L^2}.
\end{align*}
We indicate the dependence on $z$ by the subscript and write $(\cdot,\cdot)_X = (\cdot,\cdot)_{X,z}$. Assuming that $z \in X$ is such that $(\cdot,\cdot)_{X,z}$ is coercive and continuous on $X$, we can calculate the Sobolev gradient as
\begin{align*}
  \nabla_{X,z} E(z) = z - \mathcal{A}_z^{-1}\big( (1 + |\bfA|^2) z \big),
\end{align*}
where, for $v \in X'$, the element $\mathcal{A}_z^{-1} v \in X$ is defined as the unique solution of
\begin{align*}
  (\mathcal{A}_z^{-1} v,w)_{X,z} = \langle v, w \rangle \quad \text{for all } w \in X.
\end{align*} 
Using the Sobolev gradient, we define the descent direction for the nonlinear CSG method as
\begin{align*}
  d^n = - \nabla_{X,u^n} E(u^n) + \beta^n d^{n-1},
\end{align*}
where $\beta^n \ge 0$ is the Polak--Ribi\`ere dissipation parameter, cf.~\cite{PR69}, defined by
\begin{align} \label{eq:PRparameter}
  \beta^n 
  := \max \Big\{0, 
    \frac{(\nabla_{X,u^n} E(u^n), \nabla_{X,u^n} E(u^n) -\nabla_{X,u^{n-1}} E(u^{n-1}))_{X,u^n}}
         {(\nabla_{X,u^{n-1}} E(u^{n-1}), \nabla_{X,u^{n-1}} E(u^{n-1}))_{X,u^{n-1}}} 
    \Big\}, \quad n \ge 1,
\end{align}
and $\beta^0 = 0$. The step size $\tau_n > 0$ is then chosen such that the energy becomes minimal along the descent direction, i.e.,
\begin{align*}
  \tau_n = \underset{\tau > 0}{\mathrm{argmin}} \, E(u^n + \tau d^n).
\end{align*}
Since $E(u^n + \tau d^n)$ is a polynomial of order four in $\tau$, the minimum can, in practice, be efficiently approximated using, for instance, a line search method. This leads to the following algorithm for the nonlinear CSG method:

\begin{definition}[nonlinear conjugate Sobolev gradient descent method] \label{def:CG}
For given $u^0 \in X$, $u^0 \neq 0$ and a tolerance $\mathrm{tol} > 0$. Set $d^0 = 0$ and perform for $n = 0,1,2,\dots$ the following steps:
\begin{enumerate}
\item \textbf{Solve} for $\delta^n \in X$ such that
\begin{align*}
	(\delta^n, w)_{X,u^n} = \big((1+|\bfA|^2) u^n, w \big)_{L^2} \quad \text{for all } w \in X.
\end{align*}
\item \textbf{Update} the descent direction
\begin{align*}
	d^n = \delta^n - u^n + \beta^n d^{n-1}
\end{align*}
with the Polak-Ribi\`ere dissipation parameter $\beta^n$ from \eqref{eq:PRparameter}.
\item \textbf{Perform} line search to determine
\begin{align*}
	\tau_n = \underset{\tau > 0}{\mathrm{argmin}} \, E(u^n + \tau d^n).
\end{align*}
\item \textbf{Iterate} according to
\begin{align*}
	u^{n+1} = u^n + \tau_n d^n.
\end{align*}
\end{enumerate} 
\end{definition}

Although a rigorous convergence analysis of the CSG-method is still open, one expects that the iteration converges to a limit $u \in X$ such that $E'(u) = 0$ in $X'$, i.e., $u \in X$ satisfies the first order condition. In practice, this limit is well approximated by the current iterate $u^n$ once a termination criterion $E(u^n) - E(u^{n-1}) < \mathrm{tol}$ is achieved for a given tolerance $\mathrm{tol} > 0$. In a post-processing step one can check the spectrum of the second derivative $E''(u^n)$ numerically to ensure that $u^n \approx u$ is actual a local minimizer of $E$ in $X$. If this is not the case one can perturb the current iterate into the direction of a neutral or negative eigenvalue and restart the iteration.

\subsection{Numerical results}

We use the nonlinear CSG method to compute GL energy minimizers in a two-dimensional model problem. As the approximation space, we employ the LOD space introduced and analyzed in Section~\ref{sec:error_analysis}. The convergence of discrete minimizers in the LOD space to exact minimizers, as stated in our main results, has already been verified numerically in~\cite{BDH25}, to which we refer for detailed numerical experiments.

In contrast, we are interested in computing GL energy minimizers with a high spatial resolution using the LOD approach for several values of the GL parameter~$\kappa$ typical for physically relevant type-II superconductors. Examples of such materials include niobium ($\kappa \sim 1$), magnesium diboride ($\kappa \sim 25$), and yttrium barium copper oxide ($\kappa \sim 100$); see~\cite{Finnetal01,McCSe65,StZw97}. Accordingly, we consider the values $\kappa = 10, 25, 50, 75, 100$.

Our focus is on global minimizers of the GL energy in the given configuration, rather than local ones. Since the CSG scheme does not necessarily converge to a global minimizer for arbitrary initial values, we initialize the scheme for several distinct initial values and compare the resulting energy levels of the obtained local minimizers to identify potential candidates for the global minimizer. The initial values are chosen empirically but are inspired by the literature in similar contexts~\cite{BWM05}. The implementation for our experiments is available as a MATLAB Code on \url{https://github.com/cdoeding/GLE_VortexStates_LOD}. \\

For the two-dimensional model problem, we set $\Omega = (0,1)^2$ and choose the magnetic vector potential
\begin{align*}
\bfA(x,y):= \sqrt{2} 
\begin{pmatrix} 
\sin (\pi x) \cos (\pi y)\\[0.3em] 
-\cos (\pi x) \sin (\pi y) 
\end{pmatrix}.
\end{align*}
We employ a uniform triangular mesh on $\Omega$ of mesh size $h = 2^{-7}$ for the LOD approximation space and refer to the literature \cite{BDH25,EHMP19,MaPeBook21} for the practical realization of the LOD. In particular, to compute a basis of $\Vlod$, we solve the corrector problems \eqref{corrctor-op-def} with $\mathcal{P}_1$-Lagrange finite elements on a fine mesh of size $h_{\mathrm{fine}} = 2^{-10}$ and use a localization to patches of $\ell = 10$ layers per element. We set $\alpha = \tfrac{1}{\sqrt{2}}(1 + \ci)$ and generate the different initial values using the functions
\begin{alignat*}{2}
	\psi_1(x,y) & = \alpha e^{-(x^2 + y^2)}, && \quad \psi_2(x,y) = \alpha (x + \ci y) e^{-(x^2 + y^2)}, \\	
	\psi_3(x,y) & = 1 - \alpha e^{-5(x^2 + y^2)}, && \quad \psi_4(x,y) = \tfrac{1}{\sqrt{\pi}} \big( \tfrac{2}{3} (x + \ci y) + \tfrac{1}{2} \big) e^{-(x^2 + y^2)}, \\
	\psi_5(x,y) & = \alpha e^{-10 \ci (x^2 + y^2)}, && \quad \psi_6(x,y) = \alpha (x + \ci y) e^{-10 \ci (x^2 + y^2)}, \\
	\psi_7(x,y) & = 1 - \alpha \sin(4\pi x)^2 \sin(4\pi y)^2, && \quad \psi_8(x,y) = \psi_6(x - \tfrac12, y - \tfrac12), \\
	\psi_9(x,y) & = \alpha (x - \tfrac12 + \ci), && \quad \psi_{10}(x,y) = \alpha,
\end{alignat*} 
which we define on the unit square $(-1,1)^2$. These functions are mapped to $\Omega$ using the transformation $\chi:(0,1) \rightarrow (-1,1)^2$, $(x,y) \mapsto \chi(x,y) = (2x-1,1y-1)$ so that the initial values for the CSG scheme are given by projection of
\begin{align*}
	\varphi_j = \psi_j \circ \chi, \quad j = 1,\dots,10
\end{align*}
into $\Vlod$. We apply the nonlinear CSG method from Definition \ref{def:CG} until a tolerance of $\mathrm{tol} = 10^{-15}$ is reached. In each iteration, we perform a golden section search on the interval $[0.1, 30]$ to find the optimal step size $\tau_n$. The interval is chosen to avoid extremely small step sizes and to keep the computational effort reasonable. Post-processing, we checked the spectrum of the second Fr\'echet derivative and obtained that all eigenvalues were positive and the second smallest eigenvalue was bounded away from zero with a spectral gap of at least $10^{-5}$. The energy levels of the obtained minimizers for each initial values and value of $\kappa$ are shown in Table \ref{tab1}. We highlight the lowest energy level for each value of $\kappa$ in bold font and plot the corresponding minimizing order parameter in Figure \ref{fig:minimizers}.

\begin{figure}[h!]
\centering
\begin{minipage}{0.19\textwidth}
\includegraphics[scale=0.16]{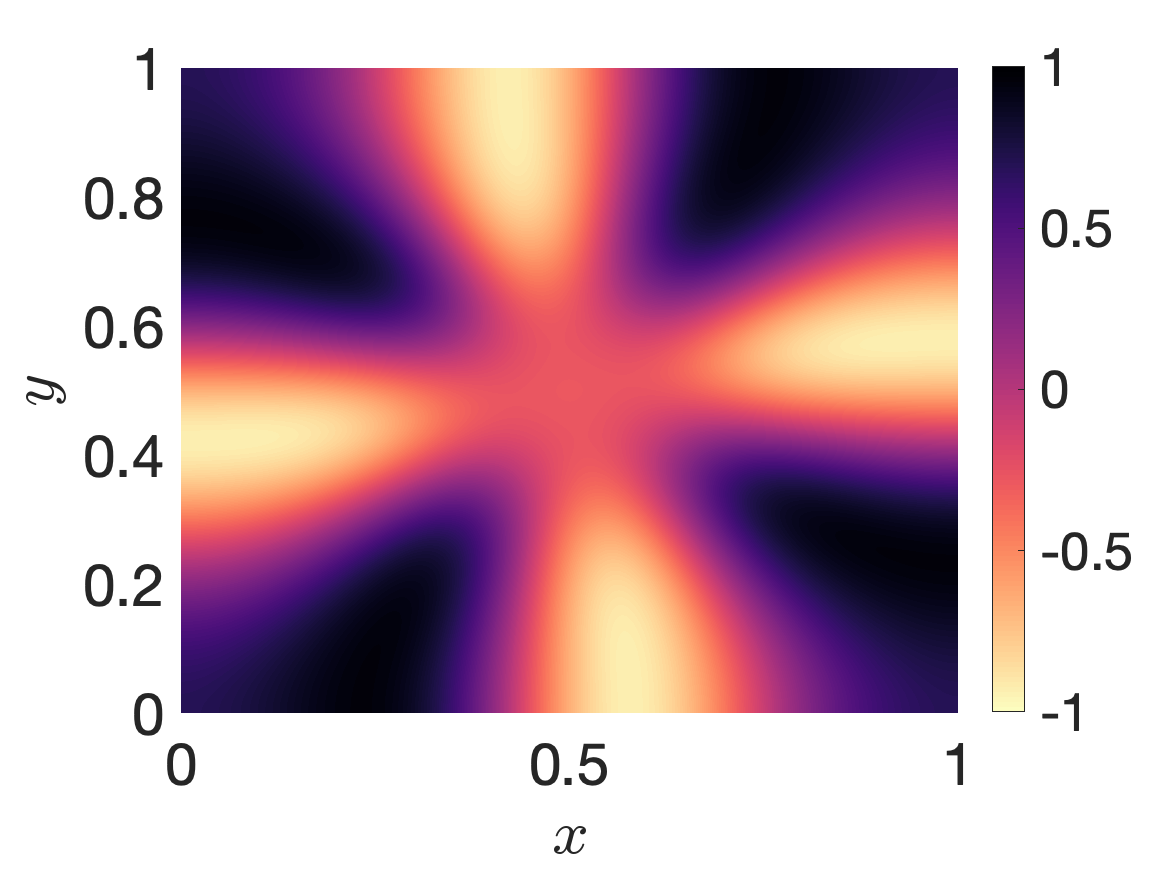}
\end{minipage}
\begin{minipage}{0.19\textwidth}
\includegraphics[scale=0.16]{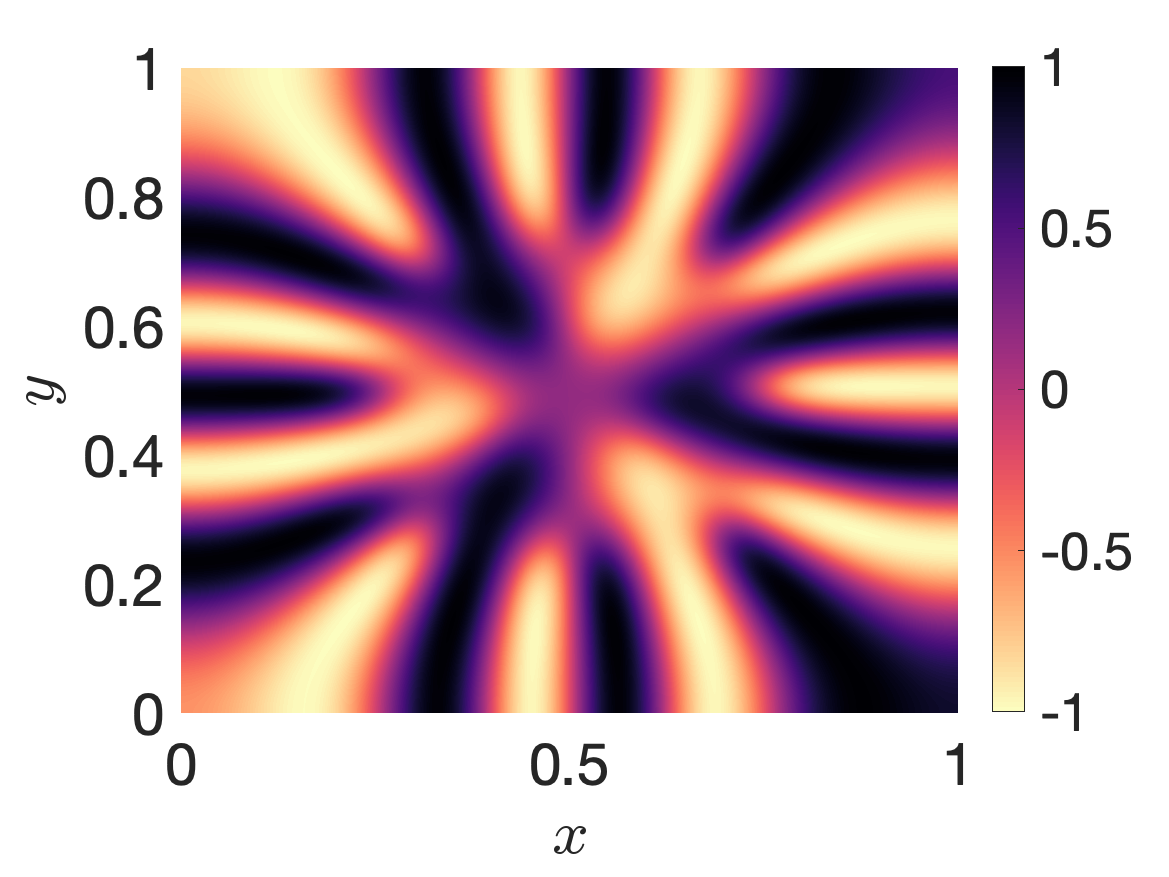}
\end{minipage}
\begin{minipage}{0.19\textwidth}
\includegraphics[scale=0.16]{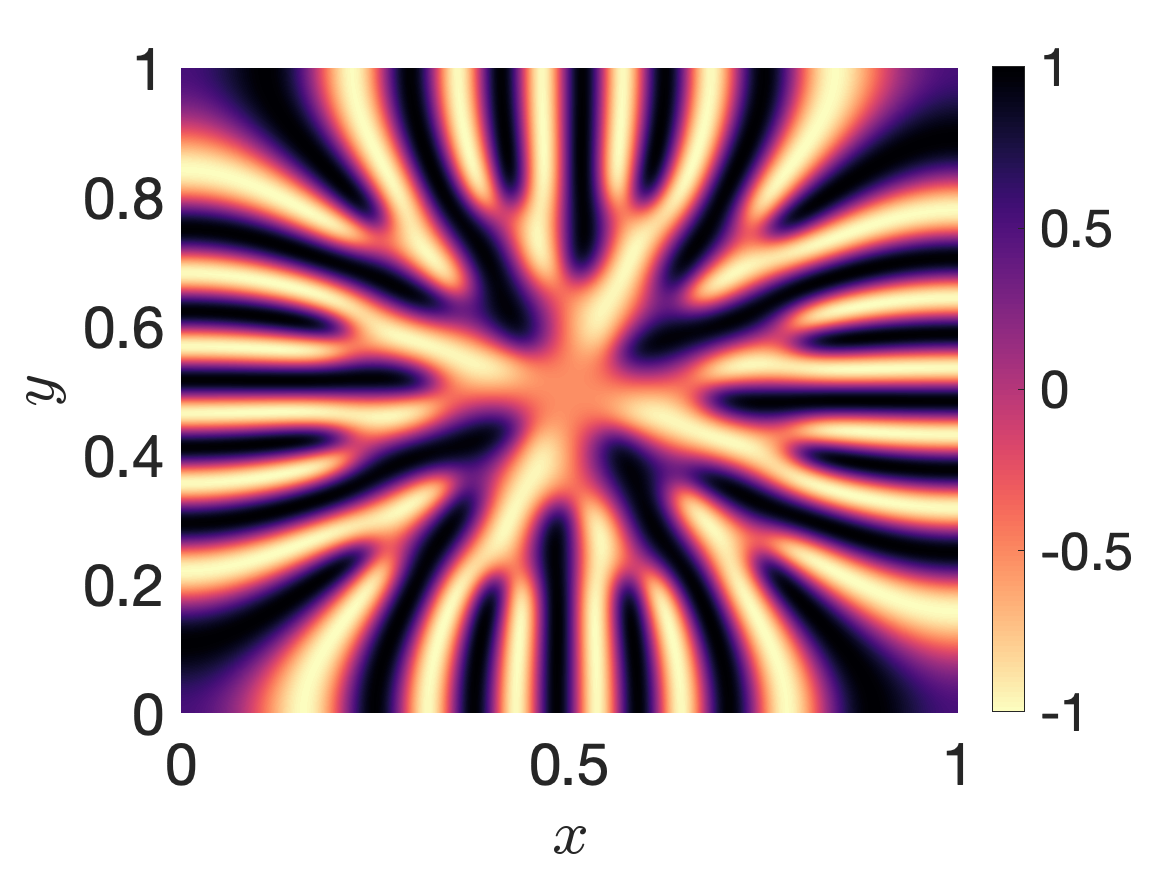}
\end{minipage}
\begin{minipage}{0.19\textwidth}
\includegraphics[scale=0.16]{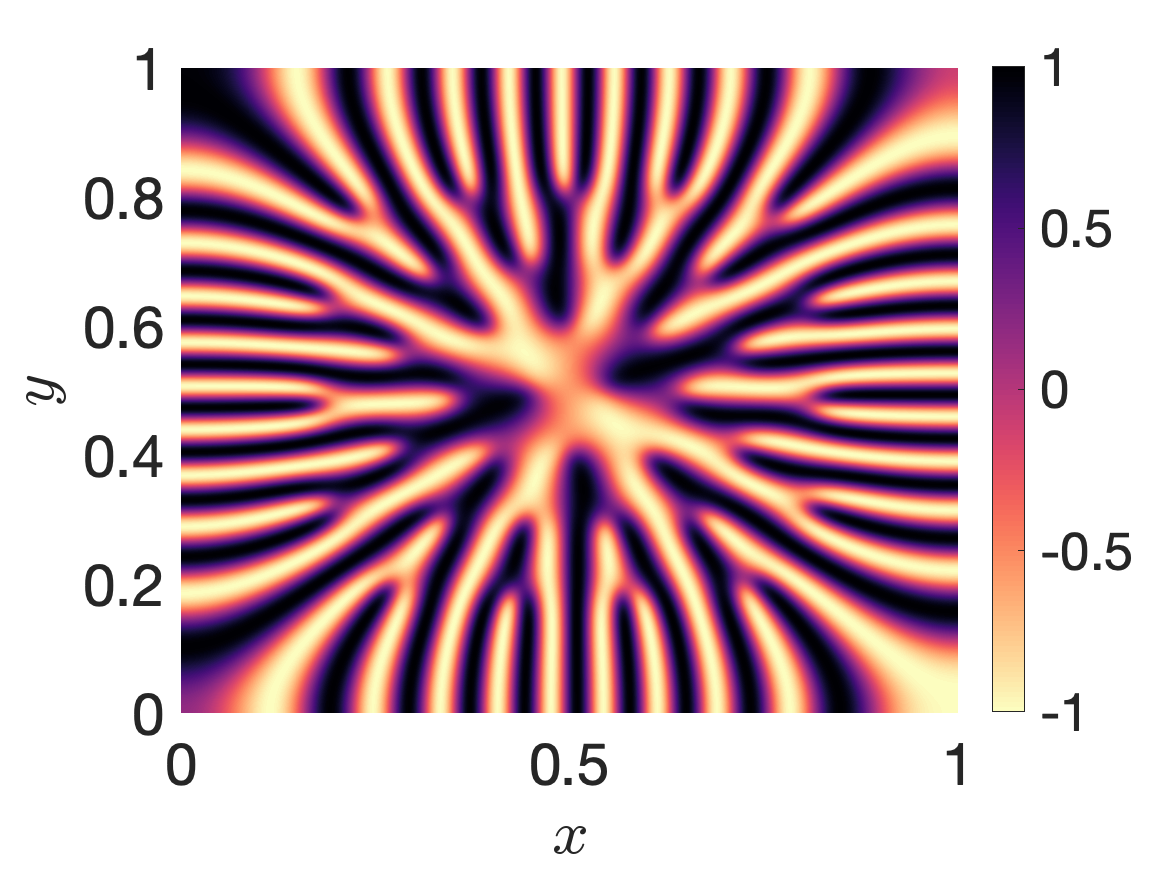}
\end{minipage}
\begin{minipage}{0.19\textwidth}
\includegraphics[scale=0.16]{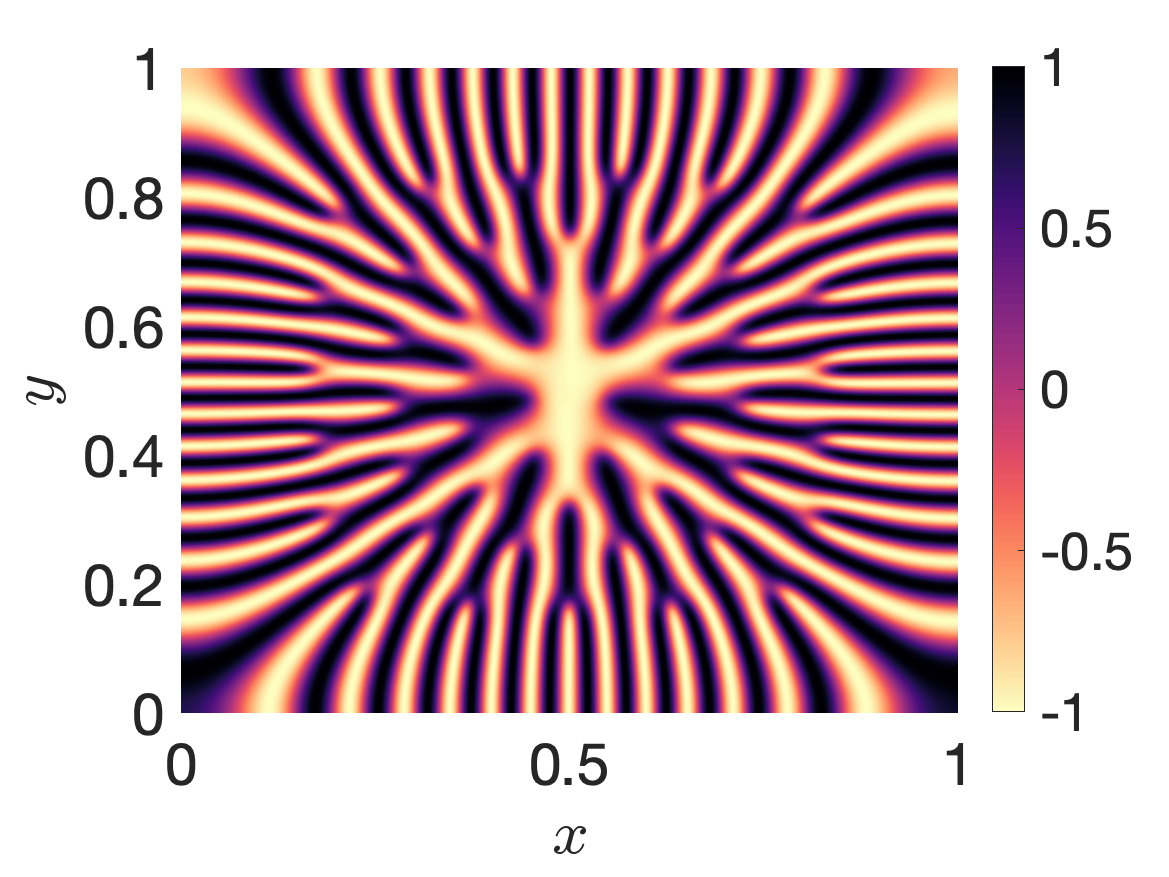}
\end{minipage} \\
\begin{minipage}{0.19\textwidth}
\includegraphics[scale=0.16]{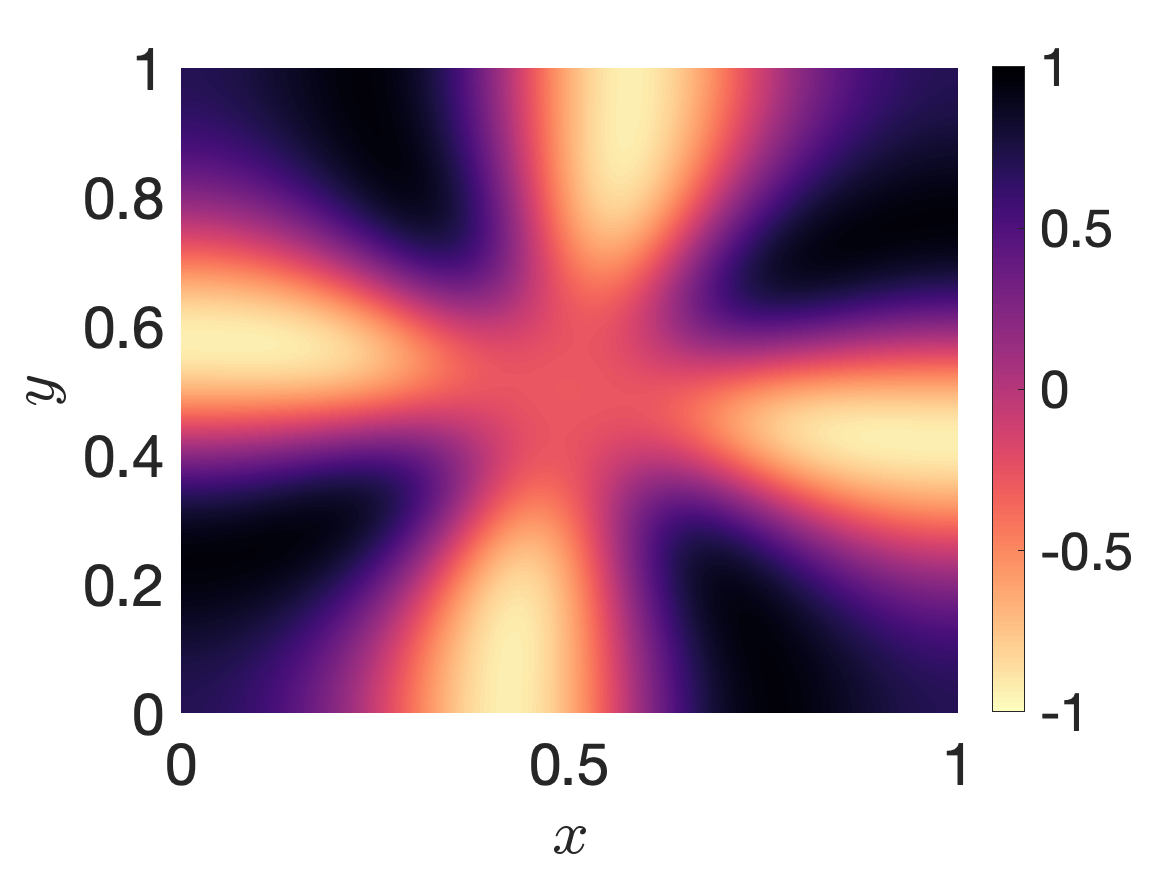}
\end{minipage}
\begin{minipage}{0.19\textwidth}
\includegraphics[scale=0.16]{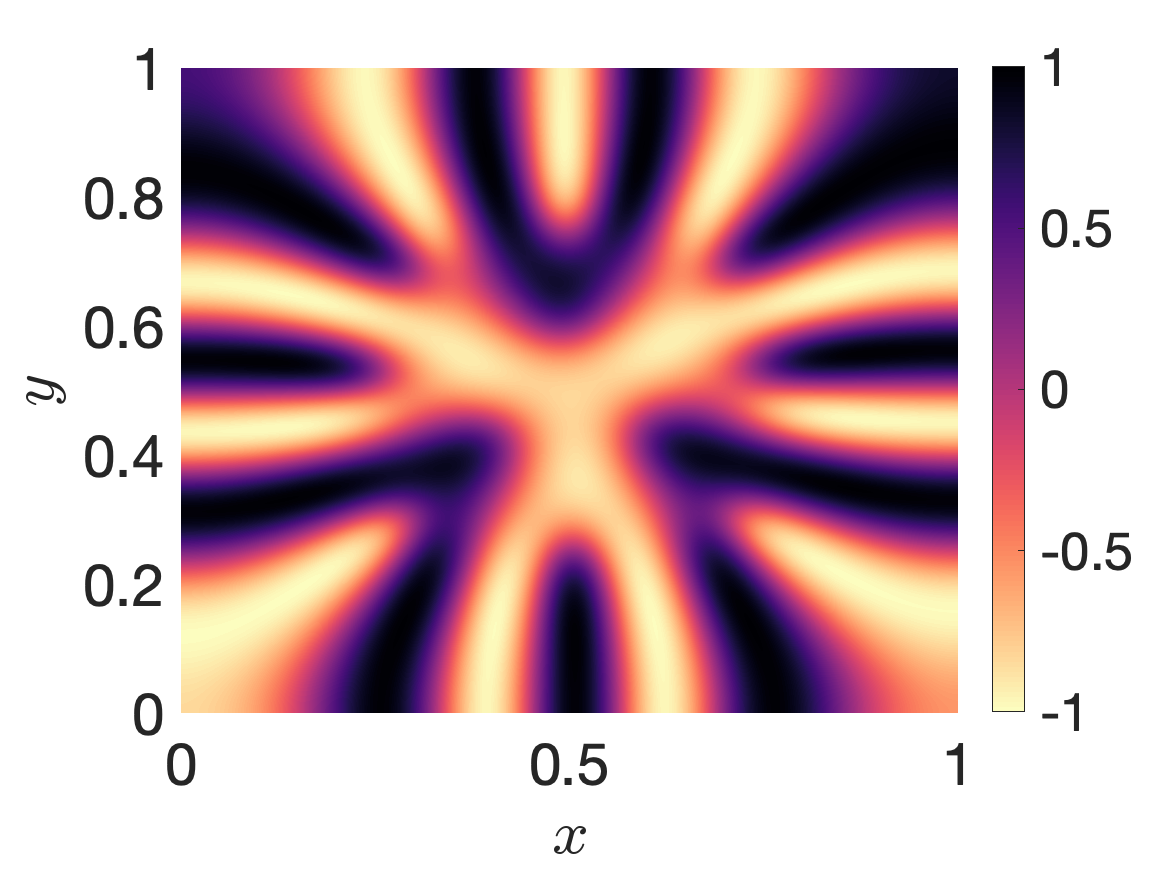}
\end{minipage}
\begin{minipage}{0.19\textwidth}
\includegraphics[scale=0.16]{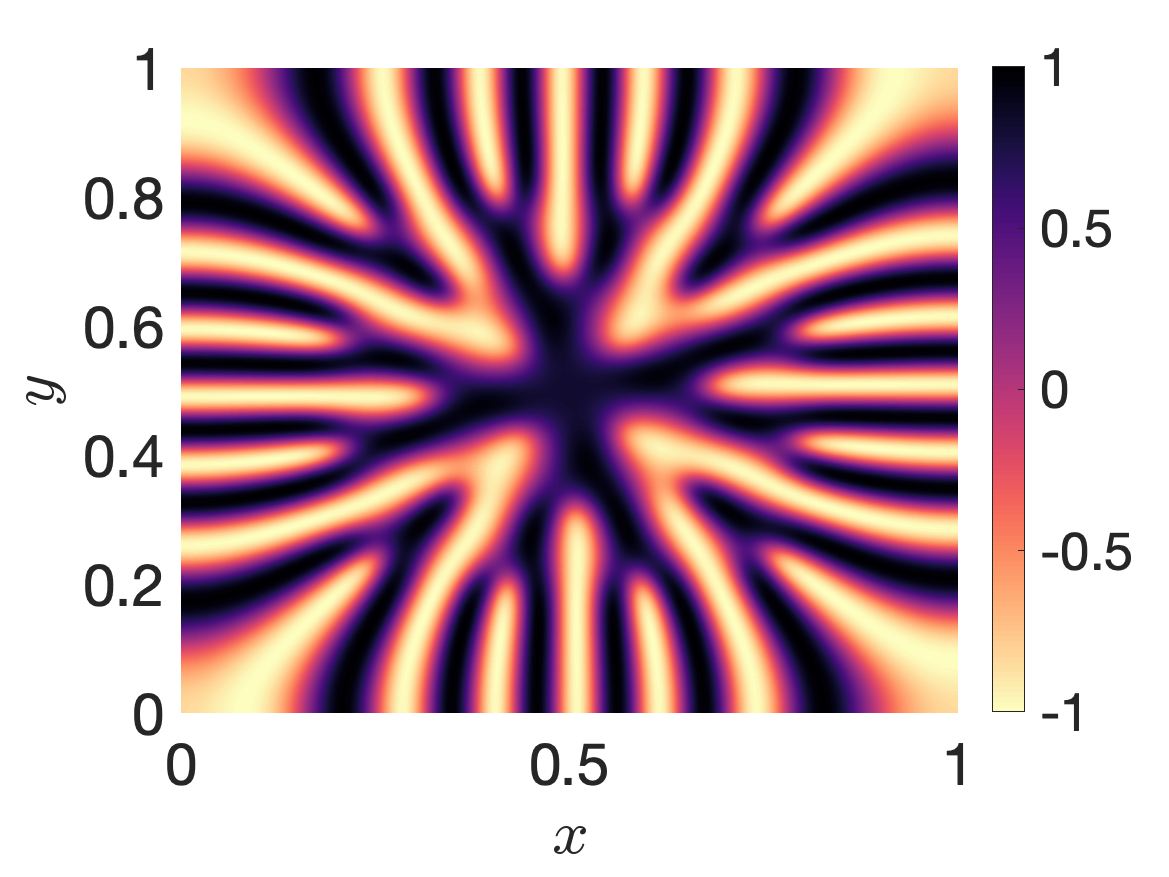}
\end{minipage}
\begin{minipage}{0.19\textwidth}
\includegraphics[scale=0.16]{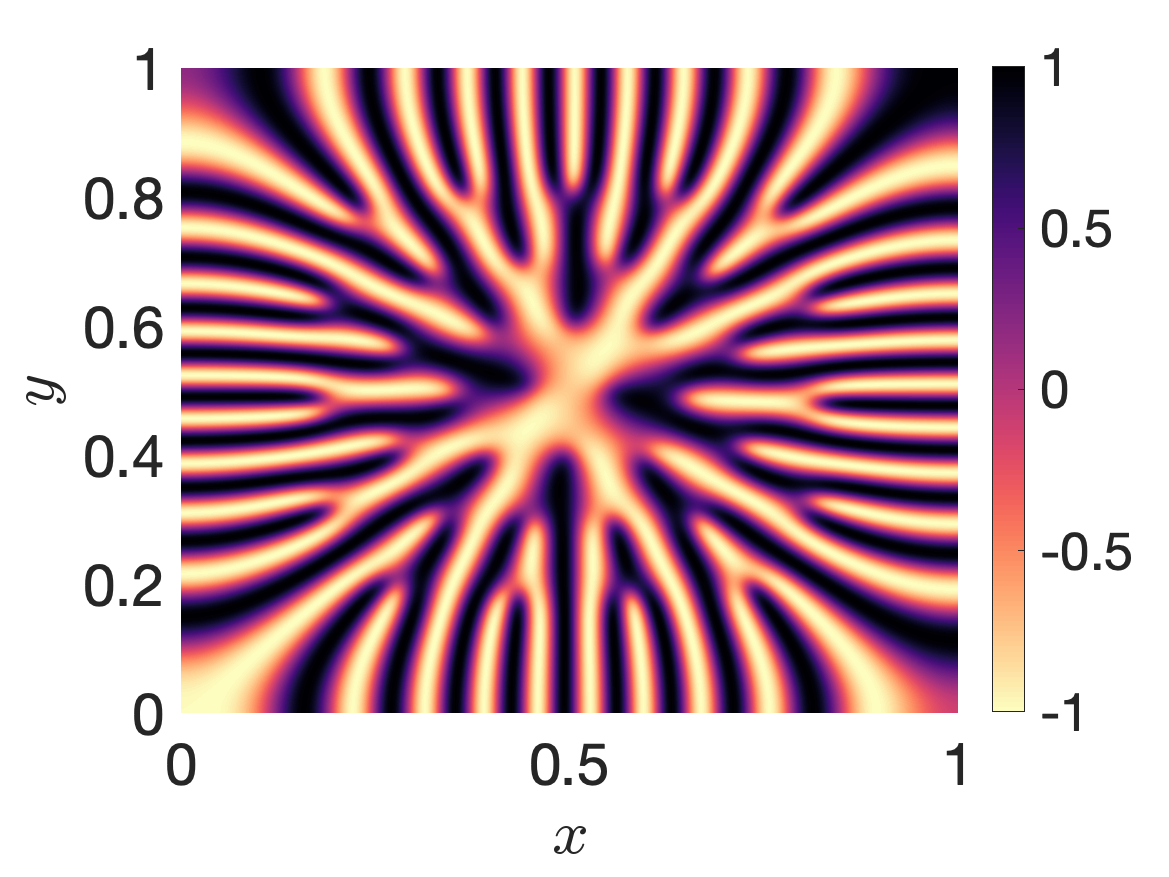}
\end{minipage}
\begin{minipage}{0.19\textwidth}
\includegraphics[scale=0.16]{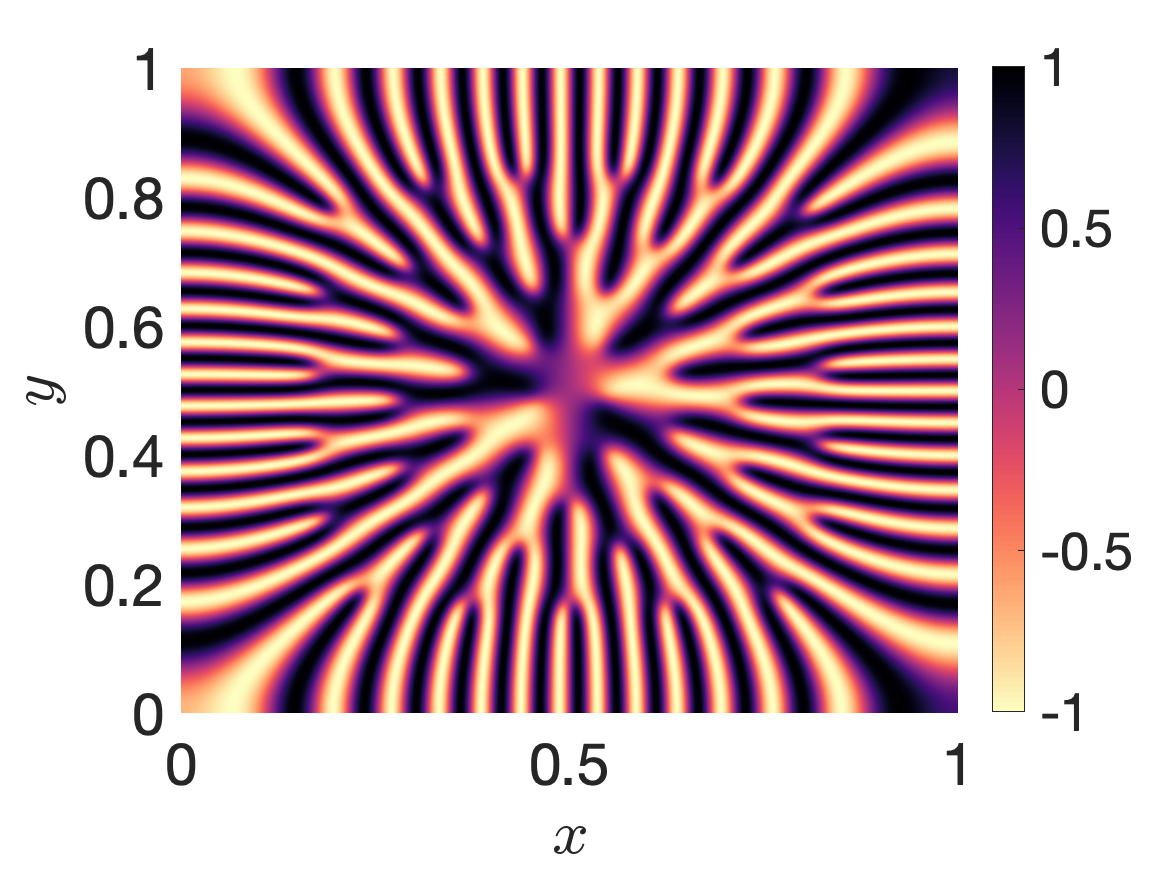}
\end{minipage} \\
\begin{minipage}{0.19\textwidth}
\includegraphics[scale=0.16]{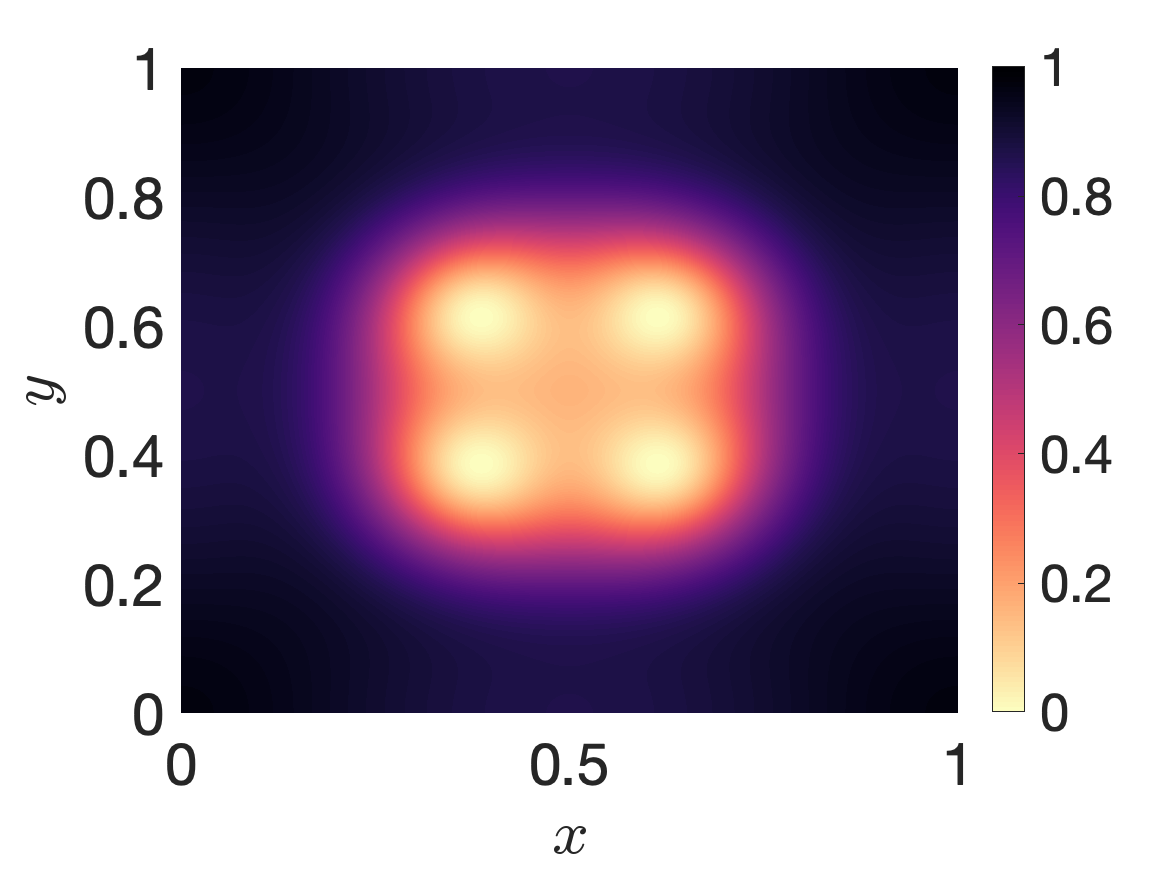}
\end{minipage}
\begin{minipage}{0.19\textwidth}
\includegraphics[scale=0.16]{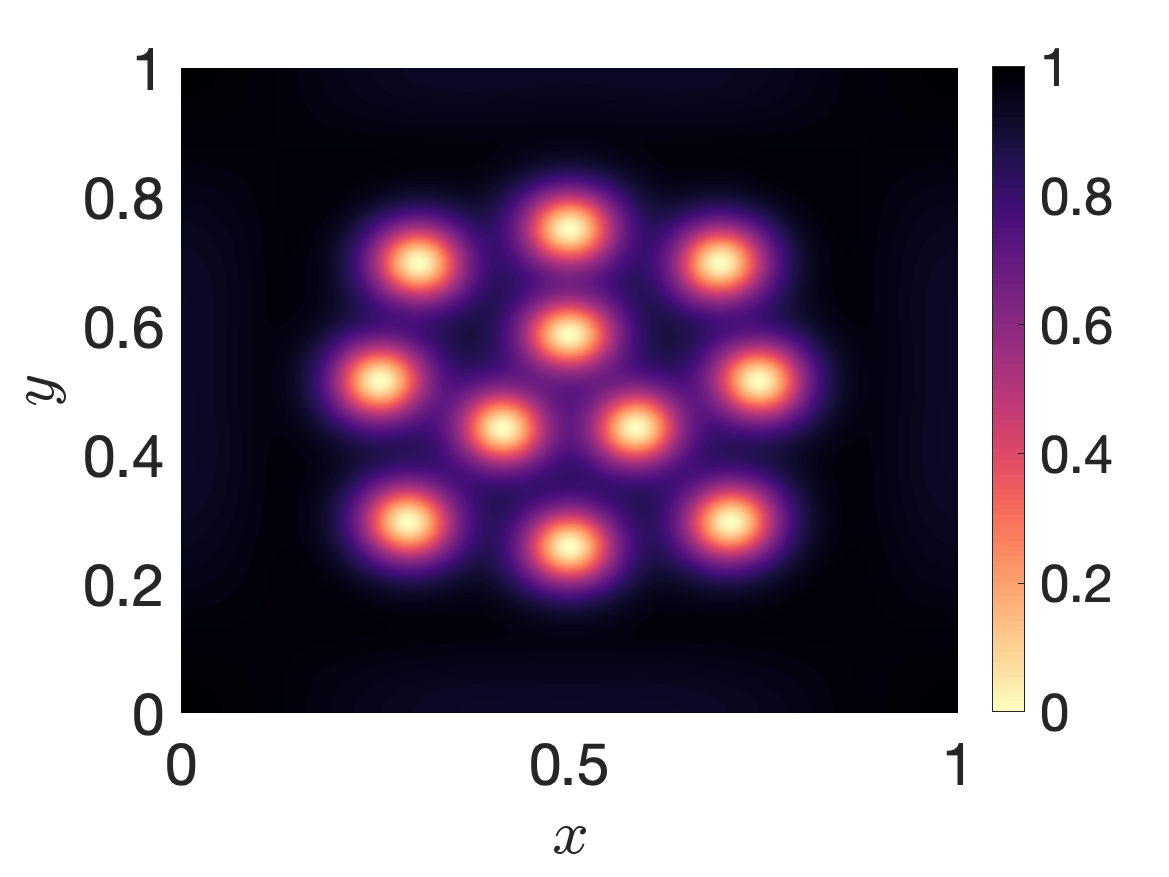}
\end{minipage}
\begin{minipage}{0.19\textwidth}
\includegraphics[scale=0.16]{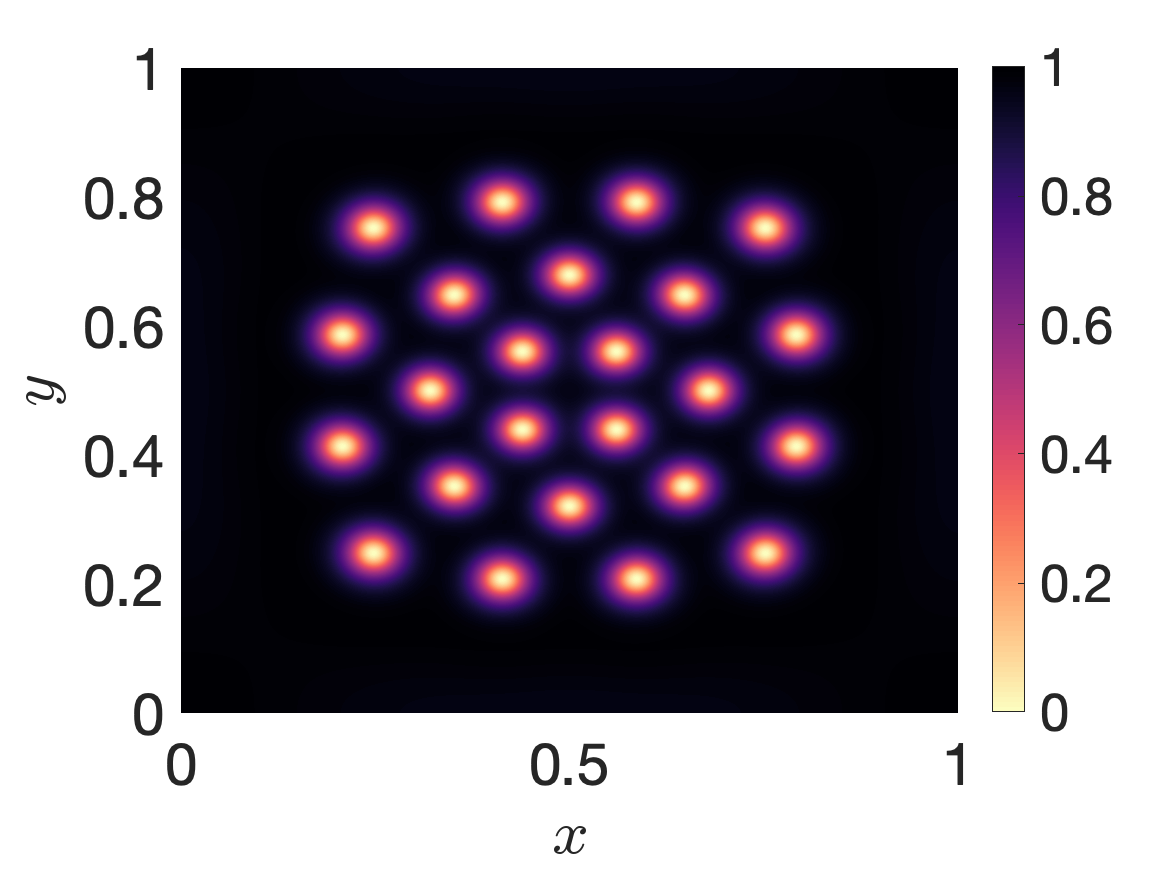}
\end{minipage}
\begin{minipage}{0.19\textwidth}
\includegraphics[scale=0.16]{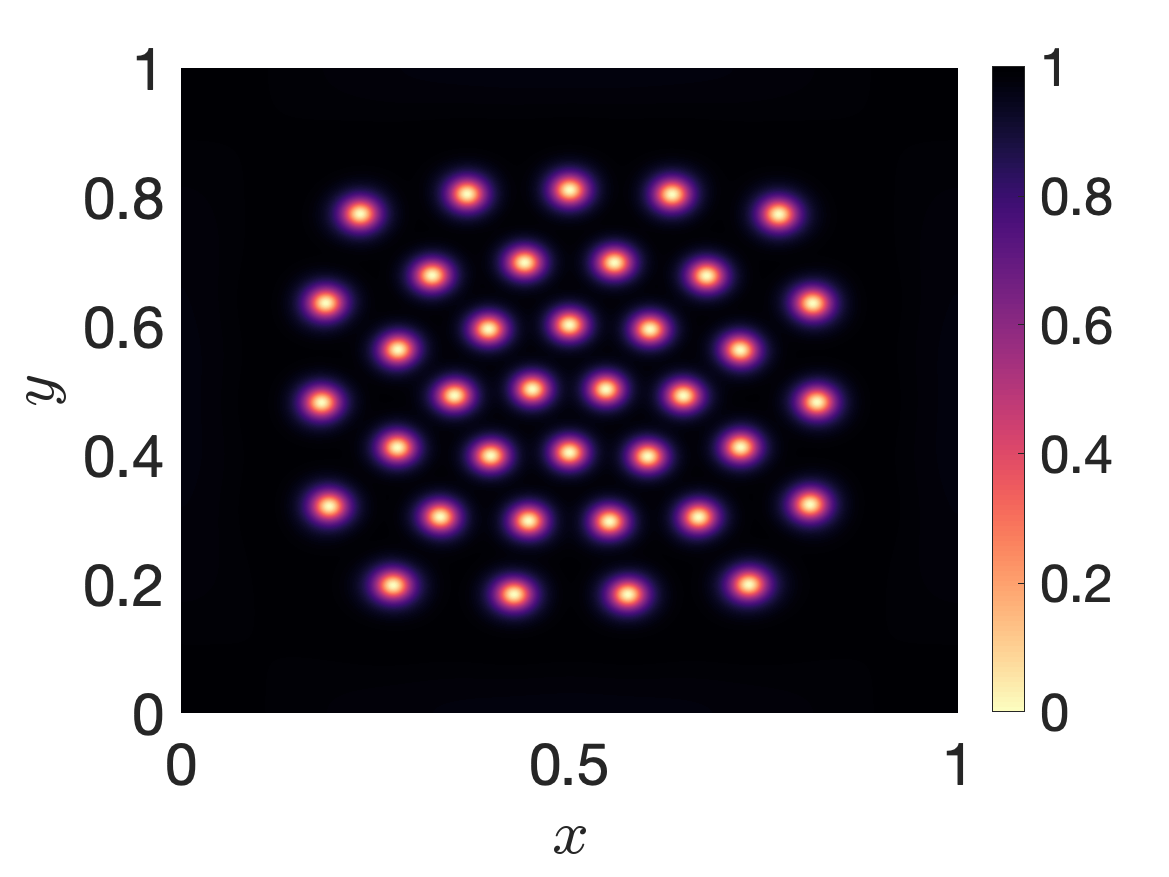}
\end{minipage}
\begin{minipage}{0.19\textwidth}
\includegraphics[scale=0.16]{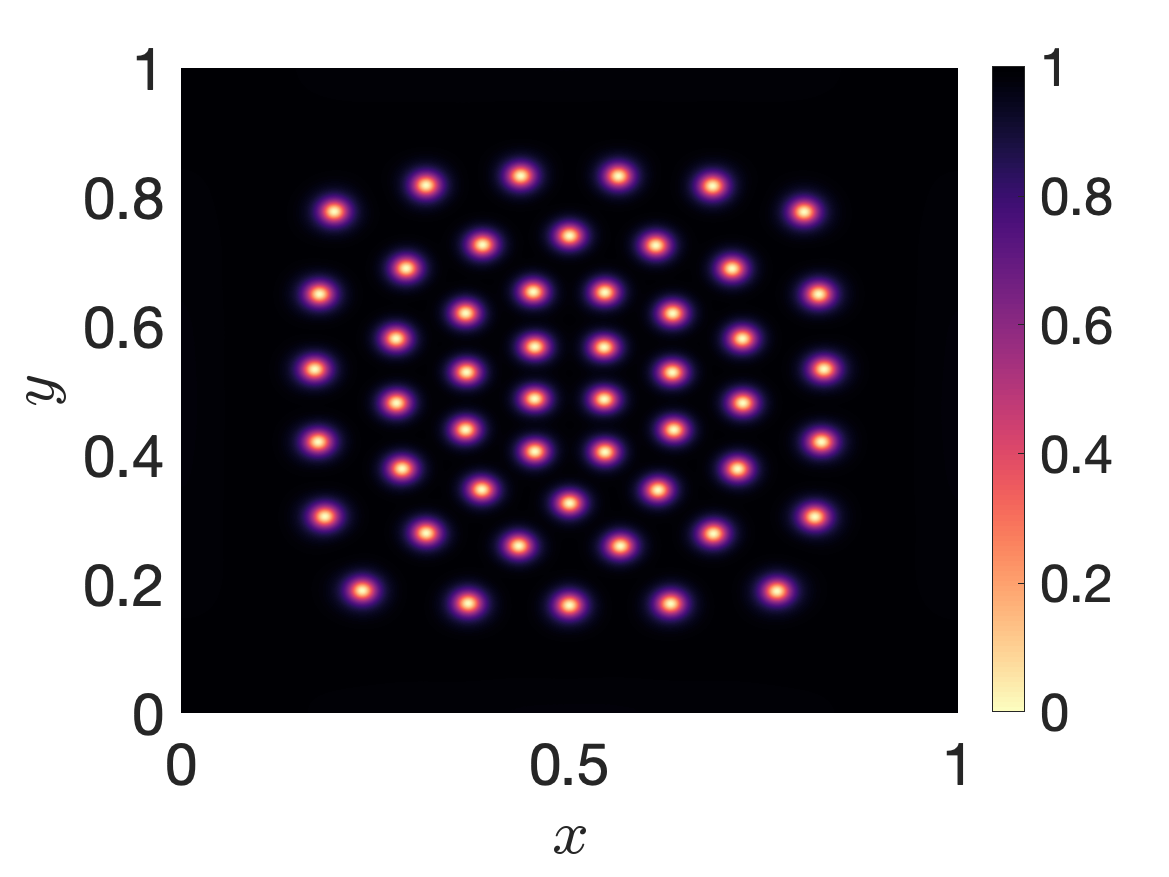}
\end{minipage}
	\caption{Real part (top line), imaginary part (middle line), and density (bottom line) of discrete minimizers $\ulod$ for $\kappa=10, 25, 50, 75, 100$ (left to right).}
	\label{fig:minimizers}
\end{figure}

\begin{table}[h!]
	\centering
	\begin{tabular}[h]{ |c|c|c|c|c|c| }
	\hline
		$\kappa$ 
		& 10 
		& 25 
		& 50
		& 75
		& 100 \\
		\hline
		$\varphi_1$ 
		& $\mathbf{0.104595899}$
		& $0.064876048$
		& $\mathbf{0.041548084}$
		& $0.036343595$
		& $0.027027260$ \\
		\hline
		$\varphi_2$ 
		& $0.118299561$
		& -
		& $0.042032619$
		& $0.038215386$
		& $0.027511422$\\
		\hline
		$\varphi_3$ 
		& $\mathbf{0.104595899}$
		& $0.064876048$
		& $0.048215444$
		& $0.036343595$
		& $0.027027260$\\
		\hline
		$\varphi_4$ 
		& $0.118299561$
		& $\mathbf{0.064533569}$
		& $\mathbf{0.041548084}$
		& $\mathbf{0.031980237}$
		& $0.027511422$\\
		\hline
		$\varphi_5$ 
		& $\mathbf{0.104595899}$
		& $0.064876048$
		& $\mathbf{0.041548084}$
		& $0.032166814$
		& $0.027027260$\\
		\hline
		$\varphi_6$ 
		& $0.118299561$
		& $\mathbf{0.064533569}$
		& $0.044209616$
		& $0.032365600$
		& $\textbf{0.026575312}$ \\
		\hline
		$\varphi_7$ 
		& $\mathbf{0.104595899}$
		& $0.064876048$
		& $0.048215444$
		& $0.032931348$
		& $0.030084660$ \\
		\hline
		$\varphi_8$ 
		& $\mathbf{0.104595899}$
		& $0.068566997$
		& $0.042723703$
		& $0.032009024$
		& $0.026581236$\\
		\hline
		$\varphi_9$ 
		& $\mathbf{0.104595899}$
		& $0.064876048$
		& $0.043236633$
		& $0.034727218$
		& $0.029023546$\\
		\hline
		$\varphi_{10}$ 
		& $\mathbf{0.104595899}$
		& $0.064876048$
		& $\mathbf{0.041548084}$
		& $0.036343595$
		& $0.036349379$\\
		\hline
		\end{tabular}
	\caption{Energy $E(\ulod)$ of (local) minimizers $\ulod$.}
	\label{tab1}
\end{table}

In view of Table~\ref{tab1}, we indeed observe multiple local minimizers whose
energy levels vary depending on the choice of the initial value. For
$\kappa = 10$, we obtain only two distinct local minimizers, whereas for larger
values of $\kappa$ we obtain up to eight different energy levels. This highlights
the sensitivity of the CSG scheme with respect to the initial value. To the
best of our knowledge, no method is known that is guaranteed to converge to the
global minimizer, nor are there results indicating how to choose the initial
value so that the CSG scheme converges to it. Our results provide no clear
indication of which initial value leads to the minimizer with the lowest
energy. Nevertheless, for the larger values $\kappa = 25, 50, 75, 100$, good
candidates appear to be $\varphi_4$ or $\varphi_6$; however, for $\kappa = 10$
these initial values do not yield the minimal energy level. There are
indications that the initial values $\varphi_j$ with $j \in
\{2,3,7,8,9\}$ may not be suitable. For example, $\varphi_2$ did not converge
for $\kappa = 25$, as the step size was reduced to its lower bound $0.1$, which
caused the energy to blow up.

Regarding the number of iterations required for the scheme to converge, we
observe a clear increase with $\kappa$: for $\kappa = 10$ roughly 50 iterations
were needed; for $\kappa = 25$ approximately 500; for $\kappa = 50$ about 1500;
for $\kappa = 75$ around 2500; and for $\kappa = 100$ about 3500 iterations. The
question of how to reliably compute global minimizers, rather than merely local
minimizers, of the Ginzburg--Landau energy remains an open problem in the
field.

Finally, we mention that we computed the spectrum of the second Fr\'echet
derivative in a post-processing step and obtained, for all local minimizers, a
strictly positive spectrum with a first eigenvalue very close to zero. This
near-zero eigenvalue corresponds to the neutral direction induced by the
symmetry of the GL energy. The spectral gap to the second-smallest eigenvalue
was at least $10^{-5}$, which is still small and, in particular, decreased with
increasing $\kappa$. This behavior reflects the dependence on the coercivity
constant $\rho(\kappa)^{-1}$ from Lemma~\ref{lem:coercivity}. Such weak
coercivity is one of the main challenges in the numerical computation of GL
energy minimizers, as it leads to slow convergence of iterative schemes.

In Figure~\ref{fig:minimizers}, we show the order parameter of the minimizers
with the lowest energy levels for the various values of $\kappa$. The Abrikosov
vortex lattice is clearly identifiable in the density of the order parameter
(bottom row). As expected, the number of vortices increases with the
Ginzburg--Landau parameter $\kappa$, while the size of the vortices
simultaneously decreases. Moreover, we observe that the structure of the order
parameter in both the real and imaginary parts (top and middle rows) is even
more intricate than the vortex lattice alone, exhibiting additional oscillations
in the complex phase. Although we do not show all local minimizers computed in
our experiments, we emphasize that the vortex patterns indeed differ across the
higher energy levels.

As demonstrated in the numerical experiments of~\cite{BDH25}, the LOD multiscale
approximation space resolves the vortex pattern with only a few degrees of
freedom. This is due to its higher order of convergence compared to standard
first-order finite elements and, in particular, to the weaker resolution
condition required to obtain meaningful approximations.

\end{document}